\begin{document}
% THEOREM Environments
\theoremstyle{plain}
\newtheorem{them}{Theorem}[]
\newtheorem{lem}{Lemma}[]
\newtheorem{propo}{Proposition}[]
\newtheorem{coro}{Corollary}[]
\newtheorem{proprs}{properties}[]
%==================================================
\theoremstyle{remark}
\newtheorem{rema}{Remark}[]
\newtheorem{remas}{Remarks}[]
\newtheorem{exam}{\textbf{Numerical Example}}[]
\newtheorem{exams}{\textbf{Numerical Examples}}[]
\newtheorem{df}{definition}[]
\newtheorem{dfs}{definitions}[]
% MATH ---------------------------------------------------
\def\q{\mathbb{Q}(\sqrt{d})/\mathbb{Q}}
\def\K{\mathbb{Q}(\sqrt{d},i)}
\def\NN{\mathds{N}}
\def\RR{\mathbb{R}}
\def\HH{I\!\! H}
\def\QQ{\mathbb{Q}}
\def\CC{\mathbb{C}}
\def\ZZ{\mathbb{Z}}
\def\OO{\mathcal{O}}
\def\kk{\mathds{k}}
\def\KK{\mathbb{K}}
\def\ho{\mathcal{H}_0^{\frac{h(d)}{2}}}
\def\LL{\mathbb{L}}
\def\h{\overline{\mathcal{H}_0}}
\def\hh{\overline{\mathcal{H}_1}}
\def\hhh{\overline{\mathcal{H}_2}}
\def\jj{\overline{\mathcal{H}_3}}
\def\jjj{\overline{\mathcal{H}_4}}
\def\k{\mathds{k}^{(*)}}
\def\l{\mathds{L}}
\def\L{\kk_2^{(2)}}
%---------------------------------------------------------------------------
%---------------------------------------------------------------------------
\title[ Structure of $G$...]{Structure of $\mathrm{Gal}(\L/\kk)$ for some fields\\ $\kk=\QQ(\sqrt{2p_1p_2},i)$ with $\mathbf{C}l_2(\kk)\simeq(2, 2, 2)$}
%premier auteur
\author[Abdelmalek AZIZI]{Abdelmalek Azizi}
\address{Abdelmalek Azizi and Abdelkader Zekhnini: Département de Mathématiques, Faculté des Sciences, Université Mohammed 1, Oujda, Morocco }
% deuxième auteur
\author{Abdelkader Zekhnini}
% troisième auteur
\email{abdelmalekazizi@yahoo.fr}
\email{zekha1@yahoo.fr}

\author[Mohammed Taous]{Mohammed Taous}
\address{Mohammed Taous: Département de Mathématiques, Faculté des Sciences et Techniques, Université Moulay Ismail, Errachidia, Morocco}
\email{taousm@hotmail.com}
\subjclass[2010]{11R11, 11R29, 11R32,  11R37}
\keywords{Galois group, Coclass, Class group, Capitulation,  Hilbert class}
\maketitle
\selectlanguage{english}
\begin{abstract}
 Let $p_1 \equiv p_2 \equiv5\pmod8$ be different primes. Put $i=\sqrt{-1}$ and $d=2p_1p_2$, then the bicyclic biquadratic field $\kk=\K$ has an elementary abelian 2-class group of rank $3$.  In this paper  we determine the nilpotency class, the coclass, the generators and the structure of the non-abelian Galois group $\mathrm{Gal}(\L/\kk)$ of the second Hilbert 2-class field $\L$ of $\kk$. We study the  capitulation problem of the 2-classes of $\kk$ in its seven unramified quadratic extensions $\KK_i$ and in its seven unramified bicyclic biquadratic extensions $\LL_i$.
\end{abstract}
 \section{Introduction}
 Let $k$ be an algebraic number field and $Cl_2(k)$ be its $2$-class group i.e. the Sylow $2$-subgroup of the  ideal
class group, $Cl(k)$, of $k$.  Denote by $k_2^{(1)}$ the Hilbert 2-class field of $k$ and  by $k_2^{(2)}$ its second  Hilbert 2-class field. Put $G=\mathrm{Gal}(k_2^{(2)}/k)$ and let $G'$ denote its derived group, then it is well known that $C/G'\simeq Cl_2(k)$. The knowledge of $G$, its structure and its generators solves a lot of problems in number theory as capitulation problems, the finiteness or not of the towers  of number fields and the structures of the $2$-class groups of the unramified extensions of $k$ within $k_2^{(1)}$.  For particular types of fields k, for example,  fields with $Cl_2(k)\simeq(2, 2)$, the structure of $G$ has
been completely determined (see \cite{Ki76}). The success in this case is in part due to the fact
that, contrary to most other cases, 2-groups whose abelianization  is $(2, 2)$ are
well understood, cf. \cite{Ta37} and \cite{Se70}.\par
  If one considers another case, namely where $k=\K$ and  $Cl_2(k)\simeq(2, 2, 2)$,  for some square-free   integer $d$, then
the situation is very different and very difficult  when compared with the case described above; moreover  there is no known way (to our knowledge) to determine  the structure of $G$. Our aim in the present paper is to determine the isomorphism types of the second 2-class group of certain number fields $k=\K$, to give the structure and generators of $G$  and   we will explicitly determine   $\ker j_{K/k}$, the  kernel  of the natural class extension homomorphism $j_{K/k}: Cl(k) \longrightarrow Cl(K)$, where  $K$ is an unramified  extension of $k$ within $k_2^{(1)}$. It should be noted that the determination of $\ker j_{K/k}$ is not always easy to do, especially when $K=k(\sqrt{a+b\sqrt{-1}})$ or  $K=k(\sqrt{a+b\sqrt{-1}}, \sqrt{a'+b'\sqrt{-1}})$, with some positive integers $a$, $b$, $a'$ and  $b'$.\par
 Let $m$ be a square-free integer and $K$ be a number field. Throughout this paper, we adopt the following notations:
 \begin{itemize}
   \item $h(m)$, (resp. $h(K)$): the $2$-class number  of $\QQ(\sqrt m)$ (resp. $K$).
   \item $\mathcal{O}_K$: the ring of integers of $K$.
   \item $E_K$: the unit group of $\mathcal{O}_K$.
   \item  $W_K$: the group of roots of unity contained in $K$.
   \item $\omega_K$: the order of $W_K$.
   \item $K^+$: the maximal real subfield of $K$, if it is a CM-field.
   \item $Q_K=[E_K:W_KE_{K^+}]$ is Hasse's unit index, if $K$ is a CM-field.
  \item $q(K/\QQ)=[E_K:\prod_i^s E_{k_i}]$ is the unit index of $K$, if $K$ is multiquadratic, where $k_i$  are the quadratic subfields of $K$.
  \item $K^{(*)}$: the genus field of $K$.
  \item $\mathbf{C}l_2(K)$: the 2-class group of $K$.
  \item $\varepsilon_m$: the fundamental unit of $\QQ(\sqrt m)$.
  \item $i=\sqrt{-1}$.
  \item $\mathrm{FSU}$: denotes a fundamental system of units.
 \end{itemize}
\section{Main results}
Let $p_1\equiv p_2\equiv 5 \pmod8$ be different primes, then there exist some positive integers $e$, $f$, $g$ and $h$ such that  $p_1=e^2+4f^2$ and $p_2=g^2+4h^2$. Let  $p_1=\pi_1\pi_2$ and $p_2=\pi_3\pi_4$,  where $\pi_1=e+4if$ and  $\pi_2=e-4if$ (resp. $\pi_3=g+4ih$ and $\pi_4=g-4ih$) are conjugate prime elements in the cyclotomic field $k=\QQ(i)$ dividing $p_1$ (resp. $p_2$). Denote by $\kk$ the imaginary bicyclic biquadratic field $\K$, where $d=2p_1p_2$, its three quadratic subfields are $k=\QQ(i)$, $k_0=\QQ(\sqrt{d})$ and $\overline{k}_0=\QQ(\sqrt{-d})$. Let $\kk_2^{(1)}$ be the Hilbert 2-class field of $\kk$,  $\L$ its second Hilbert 2-class field and $G$ be the Galois group of $\kk_2^{(2)}/\kk$. According to  \cite{AT08},  $\kk$ has an elementary abelian 2-class group $\mathbf{C}l_2(\kk)$ of rank 3, that is, of type $(2, 2, 2$). In an earlier paper \cite{AZT-3} we have proved that the 2-class field tower of $\kk$ has length 2,  the order of $G$ is greater than or equal to  $64$,  we have given necessary and sufficient conditions to have $G$ of order $64$ and we have shown that  if $\KK$ is an unramified quadratic extension of $\kk$ other than $\KK_3=\kk(\sqrt 2)$, then  $\mathbf{C}l_2(\KK)$ is of type $(2, 4)$ or $(2, 2, 2)$. In this paper we complete  this study by determining the structure of $G$, the abelian type invariants of the 2-class groups of all the unramified extensions of $\kk$ within $\kk_2^{(1)}$ and the  kernel  of the natural class extension homomorphism $j_{\KK/\kk}: Cl_2(\kk) \longrightarrow Cl_2(\KK)$, where  $\KK$ is an unramified  extension of $\kk$ within $\kk_2^{(1)}$. The  main results of this paper are  Theorems  \ref{2} and \ref{3} below; whereas   Theorem \ref{1}  is proved in \cite{AZT12-1},    \cite{AT08} and \cite{Ka76}.
\subsection{Unramified extensions of $\kk$}
The first and the second assertions  of the following theorem hold according to \cite{Ka76} and \cite{AT08} respectively, the others are shown in \cite{AZT12-1}.
\begin{them}\label{1}
Let $p_1$, $p_2$ be as above.
\begin{enumerate}[\rm\indent(1)]
  \item The $2$-class groups   of $k_0$, $\overline{k}_0$ are of type $(2, 2)$.
  \item The $2$-class group,  $\mathbf{C}l_2(\kk)$, of $\kk$ is of type $(2, 2, 2)$.
  \item The discriminant of $\kk$ is: $disc(\kk)=disc(k).disc(k_0).disc(\overline{k}_0)=2^8p_1^2p_2^2.$
  \item $\kk$ has seven unramified quadratic extensions within its Hilbert $2$-class field  $\kk_2^{(1)}$. They are given by:
\begin{center}$\KK_1=\kk(\sqrt{p_1})$,\qquad $\KK_2=\kk(\sqrt{p_2})$,\qquad  $\KK_3=\kk(\sqrt 2)$,\\
 $\KK_4=\kk(\sqrt{\pi_1\pi_3})$, \ $\KK_5=\kk(\sqrt{\pi_1\pi_4})$,\ $\KK_6=\kk(\sqrt{\pi_2\pi_3})$ and  $\KK_7=\kk(\sqrt{\pi_2\pi_4})$.
  \end{center}
  \item $\KK_1$, $\KK_2$, $\KK_3$ are intermediate fields between $\kk$ and its genus field $\k$. The fields $\KK_4\simeq\KK_7$
and $\KK_5 \simeq \KK_6$ are pairwise conjugate and thus isomorphic. Consequently $\KK_1$, $\KK_2$, $\KK_3$ are
absolutely abelian, whereas $\KK_4$, $\KK_5$, $\KK_6$, $\KK_7$ are  non-normal over $\QQ$.
  \item $\kk$ has seven unramified bicyclic biquadratic extensions within its Hilbert $2$-class field  $\kk_2^{(1)}$. One of them is
       \begin{center}$\mathbb{L}_1=\KK_1.\KK_2.\KK_3=\k=\QQ(\sqrt{p_1}, \sqrt{p_2}, \sqrt{q}, \sqrt{-1})$,\end{center}
the absolute genus field of $\kk$ and the others are given by:
\begin{center}
$\mathbb{L}_2=\KK_1.\KK_4.\KK_6,\quad
\mathbb{L}_3=\KK_1.\KK_5.\KK_7$,\quad
$\mathbb{L}_4=\KK_2.\KK_4.\KK_5\ and\
\mathbb{L}_5=\KK_2.\KK_6.\KK_7$
\end{center}
are non-normal over $\QQ$;  moreover $\LL_2\simeq \LL_3$ and  $\LL_4 \simeq\LL_5$.
 \begin{center} $\mathbb{L}_6=\KK_3.\KK_4.\KK_7$ \text{ and }
$\mathbb{L}_7=\KK_3.\KK_5.\KK_6$ \text{ are absolutely Galois. }
  \end{center}
\end{enumerate}
\end{them}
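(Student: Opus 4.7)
The plan is to dispatch (1)--(3) by classical tools and then reduce (4)--(6) to a linear-algebra enumeration inside $\mathbf{C}l_2(\kk)$ backed by a local ramification analysis.

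For (1), I would invoke R\'edei--Reichardt genus theory on the quadratic fields $k_0=\QQ(\sqrt d)$ and $\overline{k}_0=\QQ(\sqrt{-d})$: the three ramified rational primes $2,p_1,p_2$ force the $2$-genus rank to be $2$, and a R\'edei $4$-rank computation using $p_1\equiv p_2\equiv 5\pmod 8$ (which controls the symbols $(2/p_j)$, $(-1/p_j)$ and is already enough to kill the relevant $\mathbb{F}_2$-matrix) confirms that the rank equals the genus rank, giving $\mathbf{C}l_2(k_0)\simeq\mathbf{C}l_2(\overline k_0)\simeq(2,2)$. For (2), I would apply Chevalley's ambiguous class number formula to the quadratic extension $\kk/\overline k_0$ to obtain the $2$-rank $3$ and then promote this to the full elementary abelian structure $(2,2,2)$ by the refinement of \cite{AT08}. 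For (3), the conductor--discriminant formula for the abelian extension $\kk/\QQ$ gives $\mathrm{disc}(\kk)=\mathrm{disc}(k)\cdot\mathrm{disc}(k_0)\cdot\mathrm{disc}(\overline k_0)=(-4)(8p_1p_2)(-8p_1p_2)=2^8p_1^2p_2^2$, using $d\equiv 2\pmod 4$.

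For (4)--(5), class field theory guarantees exactly $7$ unramified quadratic extensions inside $\kk_2^{(1)}$, one per index--$2$ subgroup of $\mathbf{C}l_2(\kk)\simeq(\mathbb{Z}/2)^3$. Three of them belong to the genus field $\k$: the fields $\KK_1=\kk(\sqrt{p_1})$, $\KK_2=\kk(\sqrt{p_2})$, $\KK_3=\kk(\sqrt 2)$ are unramified over $\kk$ by a direct local check, the subtle step being at the dyadic primes and using $p_1,p_2\equiv 1\pmod 4$. The remaining four must be non-absolutely-abelian, so I would look for generators in $\ZZ[i]\setminus\ZZ$; the products $\pi_a\pi_b$ with $a\in\{1,2\}$, $b\in\{3,4\}$ exhaust the non-trivial cosets modulo squares of elements of norm $p_1p_2$. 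Complex conjugation $i\mapsto -i$ swaps $\pi_1\leftrightarrow\pi_2$ and $\pi_3\leftrightarrow\pi_4$, yielding $\KK_4\simeq\KK_7$ and $\KK_5\simeq\KK_6$, while $\KK_1,\KK_2,\KK_3$ are stable under $\mathrm{Gal}(\kk/\QQ)$ and hence absolutely abelian.

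For (6), the $7$ unramified bicyclic biquadratic extensions correspond bijectively to the $7$ index--$4$ subgroups of $\mathbf{C}l_2(\kk)$, equivalently to the $7$ lines of $\mathbb{F}_2^3$. Each $\LL_m$ is the compositum of the three $\KK_i$ whose associated characters vanish on the chosen line, so tabulating the triples is pure linear algebra. Normality over $\QQ$ is read from the action of $\mathrm{Gal}(\kk/\QQ)$ on $\{\KK_1,\ldots,\KK_7\}$: the three triples it stabilises yield the absolutely Galois fields $\LL_1,\LL_6,\LL_7$, while the two orbits of length $2$ among the remaining triples produce the conjugate pairs $\LL_2\simeq\LL_3$ and $\LL_4\simeq\LL_5$.

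The main obstacle is the dyadic ramification check for the four non-abelian candidates $\KK_4,\ldots,\KK_7$: showing that $\kk(\sqrt{\pi_a\pi_b})/\kk$ is unramified above $2$ requires $\pi_a\pi_b$ to be a square in the completion $\kk_{\mathfrak{p}}$ up to a unit, which forces a sharper $2$-adic congruence on the $\pi_j$. This is precisely where the hypothesis $p_1\equiv p_2\equiv 5\pmod 8$ (rather than merely $\equiv 1\pmod 4$) becomes indispensable.
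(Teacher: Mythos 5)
The paper itself contains no proof of this theorem: assertions (1) and (2) are quoted from \cite{Ka76} and \cite{AT08}, and (3)--(6) from \cite{AZT12-1}, so there is no internal argument to measure your proposal against; what follows assesses your sketch on its own terms. The outline is the standard route and is essentially sound. For (1), R\'edei--Reichardt does give $(2,2)$ for both $k_0$ and $\overline{k}_0$ (for $d=2p_1p_2$ and $-2p_1p_2$ the R\'edei matrix has rank $2$ because $\left(\frac{2}{p_1}\right)=\left(\frac{2}{p_2}\right)=-1$), but for the \emph{real} field $k_0$ you must also record that $N(\varepsilon_{2p_1p_2})=-1$ (Lemma \ref{8}) so that the ordinary and narrow $2$-class groups agree before the genus count gives rank $2$. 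Part (3) is a correct application of the conductor--discriminant formula, and the $\mathbb{F}_2$-linear algebra and the complex-conjugation orbit computation in (5)--(6) are exactly right, reproducing $\KK_4\simeq\KK_7$, $\KK_5\simeq\KK_6$, the normal composita $\LL_1,\LL_6,\LL_7$ and the conjugate pairs $\LL_2\simeq\LL_3$, $\LL_4\simeq\LL_5$.

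The one point you flag as an ``obstacle'' without carrying it out --- unramifiedness of $\kk(\sqrt{\pi_a\pi_b})/\kk$ above $2$ --- is indeed the only non-routine step, and it does go through. Away from $2$ there is nothing to prove, since $(\pi_a\pi_b)\mathcal{O}_{\kk}$ is the square of an ideal of $\kk$ and $\kk$ is totally imaginary. At the dyadic prime $\mathcal{H}_0$, which is totally ramified with $\mathcal{H}_0^{8}=(4)$, the criterion is solvability of $x^{2}\equiv\pi_a\pi_b\pmod{\mathcal{H}_0^{8}}$. Writing $\pi_1=e+2if$ with $e,f$ odd (this is exactly where $p_1\equiv5\pmod 8$ enters, forcing $f$ odd) one gets $\pi_1\equiv\pm1+2i\pmod{4\ZZ[i]}$, and similarly for $\pi_3$, whence $\pi_a\pi_b\equiv\pm1\pmod{4\ZZ[i]}$; since $-1=i^{2}$ with $i\in\kk$, the congruence is solvable with $x=1$ or $x=i$. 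With that congruence supplied, and with (2) resting on \cite{AT08} just as in the paper, your outline is complete.
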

\subsection{Structure of $G=\mathrm{Gal}(\L/\kk)$}
Let $\mathcal{H}_0$ (resp. $\mathcal{H}_1$, $\mathcal{H}_2$) denote the prime ideal of $\kk$ lying above $1+i$ (resp. $\pi_1$, $\pi_2$). Write $q=q(\KK_3^+/\QQ)$ for simplicity.
\begin{them}\label{2}
Keep the preceding assumptions. Then
\begin{enumerate}[\upshape\indent(1)]
  \item  $\mathbf{C}l_2(\kk)=\langle[\mathcal{H}_0],  [\mathcal{H}_1], [\mathcal{H}_2]\rangle\simeq(2, 2, 2)$.
  \item $\mathbf{C}l_2(\KK_3)$ is of type
   $\left\{
   \begin{array}{ll}
   (2^{m}, 2^{n+1})  \text{ if } q=1,\\
   (2^{\min(m, n+1)}, 2^{\max(m+1, n+2)})  \text{ if } q=2,
   \end{array}\right.$\\
   where $n$ and $m$ are determined by $2^{m+1}=h(-p_1p_2)$, $m\geq2$, and $2^{n}=h(p_1p_2)$, $n\geq1$; and either $n\geq3$ or $m\geq3$.
  \item The length of the $2$-class field tower of $\kk$ is $2$.
  \item $G=\mathrm{G}al(\kk_2^{(2)}/\kk)$ is given by:
  \begin{enumerate}[\rm\indent(i)]
    \item If $q=1$, then
\begin{align*}
G=\langle\  \rho, \tau, \sigma:& \    \rho^4=\sigma^{2^{m}}=\tau^{2^{n+1}}=1,\ \rho^2=\psi,\  [\tau, \sigma]=1,\\
                                      & [\rho, \sigma]=\ \sigma^{2},\  [\rho, \tau]=\tau^2\ \rangle,\text{ where }
                                     \end{align*}
                      $ \psi=\left\{ \begin{array}{ll}
                          \sigma^{2^{m-1}}\text{ if } (\frac{p_1}{p_2})=1 \text{ and } N(\varepsilon_{p_1p_2})=1,\\
                          \tau^{2^n}\sigma^{2^{m-1}}\text{ if } (\frac{p_1}{p_2})=-1\text{ or }(\frac{p_1}{p_2})=1 \text{ and } N(\varepsilon_{p_1p_2})=-1.\end{array}\right.$
    \item If $q=2$, then
\begin{align*}
G=\langle\ \rho,& \tau, \sigma:\  \rho^4=\sigma^{2^{m+1}}=\tau^{2^{n+2}}=1,\ \sigma^{2^{m}}=\tau^{2^{n+1}},\\
 \rho^2&=\tau^{2^n}\sigma^{2^{m-1}}, [\tau, \sigma]=1,\  [\rho, \sigma]=\sigma^{-2},\  [\rho, \tau]=\tau^2 \rangle.
 \end{align*}
  \end{enumerate}
\item The derived group of $G$ is $G'=\langle \sigma^{2}, \tau^2 \rangle$ and $\mathbf{C}l_2(\kk^{(1)}_2)$ is of type\\
   $\left\{
   \begin{array}{ll}
   (2^{m-1}, 2^{n})  \text{ if } q=1,\\
   (2^{\min(m, n+1)-1}, 2^{\max(m+1, n+2)-1})=(2, 2^{n+1})  \text{ if } q=2.
   \end{array}\right.$
\item The  coclass of G is $3$ and its nilpotency class is\\
$\left\{ \begin{array}{ll}
\max(n, m-1)+1 \text { if } q=1,\\
\max(n+1, m)+1  \text { if } q=2.
\end{array}\right.$
\end{enumerate}
\end{them}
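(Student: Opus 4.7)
The plan is to determine $G=\mathrm{Gal}(\L/\kk)$ by combining Artin reciprocity with genus theory and with the known 2-class groups of the seven quadratic extensions $\KK_i$ from \cite{AZT-3}. I will handle the six assertions in sequence.

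For (1), I would identify the three prime ideals $\mathcal{H}_0,\mathcal{H}_1,\mathcal{H}_2$ of $\kk$ lying above $(1+i),\pi_1,\pi_2$ respectively. Since the three absolutely abelian unramified quadratic extensions $\KK_1,\KK_2,\KK_3$ of Theorem \ref{1}(5) are exactly the genus subfields of $\kk$ attached to the three primes $2,p_1,p_2$ ramified in $\QQ(\sqrt d)/\QQ$, each class $[\mathcal{H}_j]$ has order $2$ and they are $\mathbb{F}_2$-independent by class field theory, producing the desired basis of $\mathbf{C}l_2(\kk)\simeq(2,2,2)$. For (2), the real biquadratic subfield $\KK_3^+=\QQ(\sqrt 2,\sqrt{p_1p_2})$ admits Kuroda's class number formula expressing $h_2(\KK_3^+)$ in terms of $h(p_1p_2)$ and the unit index $q$; the CM extension $\KK_3=\KK_3^+(i)$ then satisfies $h_2(\KK_3)=\tfrac{Q_{\KK_3}}{2}\,h_2(\KK_3^+)\,h(-p_1p_2)$ up to a conductor factor. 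Combining this with the cyclic component of $\mathbf{C}l_2(\KK_3)$ forced by Hilbert's theorem $94$ (since $\KK_3/\kk$ is cyclic unramified) yields the two cases according to $q=1$ or $q=2$.

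Part (4) is the core. The Artin reciprocity map $\mathbf{C}l_2(\kk)\simeq G/G'$ lets us choose Frobenius lifts $\rho,\sigma,\tau\in G$ of $[\mathcal{H}_0],[\mathcal{H}_1],[\mathcal{H}_2]$; by the Burnside basis theorem they generate $G$. The seven maximal subgroups of $G$ of index $2$ correspond bijectively to $\KK_1,\dots,\KK_7$, and each abelianisation must equal $\mathbf{C}l_2(\KK_i)$ as computed in \cite{AZT-3}. Reading off these abelianisations forces the exponents $\sigma^{2^{m+\delta}}=\tau^{2^{n+1+\delta}}=1$ (with $\delta\in\{0,1\}$ according to $q$), the commutation $[\sigma,\tau]=1$ (because $\langle\sigma,\tau\rangle$ projects onto an abelian quotient whose structure matches a subfield), and $[\rho,\sigma]=\sigma^{\pm 2}$, $[\rho,\tau]=\tau^2$. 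The precise value of $\rho^2\in G'$ is pinned down by analysing the capitulation kernel $\ker j_{\KK_3/\kk}$, whose dependence on $N(\varepsilon_{p_1p_2})$ and on the quadratic symbol $\left(\frac{p_1}{p_2}\right)$ discriminates the two subcases of $\psi$.

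The main obstacle will be proving that the presentation is \emph{complete}: that the abstract group defined by the listed relations has precisely the order dictated by (5), with no hidden relations collapsing it. To settle this I would reduce every word to a normal form $\rho^a\sigma^b\tau^c$ using $[\rho,\sigma]=\sigma^{\pm 2}$ and $[\rho,\tau]=\tau^2$ to move $\rho$ to the left, derive an upper bound on $|G|$, and compare it with the lower bound $|\mathbf{C}l_2(\kk)|\cdot|\mathbf{C}l_2(\kk_2^{(1)})|=8\cdot|G'|$ coming from (5) together with the fact that the tower has length $2$ (\cite{AZT-3}). The remaining parts follow quickly: (3) is already in \cite{AZT-3}; (5) is immediate because the commutator identities give $G'=\langle[\rho,\sigma],[\rho,\tau]\rangle=\langle\sigma^2,\tau^2\rangle$ whose invariants are read off from (4); and (6) follows by iterating $[\rho,\cdot]$ to compute the lower central series $\gamma_k G=\langle\sigma^{2^{k-1}},\tau^{2^{k-1}}\rangle$, from which the stated nilpotency length and coclass $3$ fall out by direct substitution.
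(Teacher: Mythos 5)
Your outline for parts (1), (3), (5) and (6) matches the paper, but there are two genuine gaps in the middle, and they are precisely where the paper does all of its work. First, in part (2) Kuroda's formula plus the CM class number formula only gives the \emph{order} $h(\KK_3)=h(p_1p_2)h(-p_1p_2)$ or twice that (this is Lemma \ref{6} in the paper); it says nothing about the two elementary divisors, and Hilbert's Theorem $94$ only forces a nontrivial capitulation kernel, not a cyclic component of prescribed length. The paper instead constructs two explicit ideal classes $[\mathfrak{A}],[\mathfrak{P}]$ of $\KK_3$ lying over the generators $\mathfrak{2}_{F_1}$ and $I$ of $\mathbf{C}l_2(F_1)$ ($F_1=\QQ(\sqrt{p_1p_2},i)$), uses the group-ring identity $2+(1+t+s+ts)=(1+t)+(1+s)+(1+ts)$ to get $\mathfrak{P}^2\sim\mathcal{H}_1\mathcal{H}_2 I$, and then pins down the exact orders of $\mathfrak{A}$ and $\mathfrak{P}$ from Lemmermeyer's description of $\mathbf{C}l_2(F_1)$ (Theorem \ref{9}, Lemma \ref{16}) together with the capitulation kernel $\kappa_{\KK_3}$ (Lemma \ref{7}); only then does the order count from the class number formula show $\langle[\mathfrak{A}],[\mathfrak{P}]\rangle$ is everything. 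Without some such construction your case split $(2^m,2^{n+1})$ versus $(2^{\min(m,n+1)},2^{\max(m+1,n+2)})$ cannot be derived.

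Second, in part (4) you take $\sigma,\tau$ to be arbitrary Frobenius lifts of $[\mathcal{H}_1],[\mathcal{H}_2]$. Such lifts have no reason to commute or to have orders $2^m$ and $2^{n+1}$: the whole point is that $\sigma=\bigl(\frac{L/\KK_3}{\mathfrak{P}}\bigr)$ and $\tau=\bigl(\frac{L/\KK_3}{\mathfrak{A}}\bigr)$ are chosen \emph{inside} the abelian index-two subgroup $\mathrm{Gal}(L/\KK_3)\simeq\mathbf{C}l_2(\KK_3)$, so that $[\tau,\sigma]=1$ and their orders are read off from part (2); their images in $G/G'$ are $[\mathcal{H}_1\mathcal{H}_2]$ and $[\mathcal{H}_0]$ (Lemma \ref{15}), not $[\mathcal{H}_1]$ and $[\mathcal{H}_2]$. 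Likewise the commutators $[\rho,\tau]=\tau^2$ and $[\rho,\sigma]=\sigma^{\mp2}$ come from $\mathfrak{A}^{1+\rho}=N_{\KK_3/\kk}(\mathfrak{A})\sim\mathcal{H}_0$ capitulating in $\KK_3$ and from whether $\mathcal{H}_1\mathcal{H}_2$ capitulates ($q=1$) or equals $\mathfrak{P}^{2^m}$ ($q=2$), and $\rho^2=\bigl(\frac{L/\KK_3}{\mathcal{H}_1}\bigr)$ is identified via the position of $\mathcal{P}_1$ in $\mathbf{C}l_2(F_1)$ (Lemma \ref{16}). Your alternative of reading relations off the abelianisations of the seven maximal subgroups cannot work as a starting point: the groups $\mathbf{C}l_2(\KK_j)$, $j\neq3$, are only known from \cite{AZT-3} to be $(2,4)$ or $(2,2,2)$, which is far too little to recover exponents $2^m,2^{n+1}$, and their exact determination is a \emph{consequence} of the presentation (Theorem \ref{3}), so using them as input would be circular. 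Finally, no separate normal-form argument is needed for completeness: $|G|=2\,|\mathbf{C}l_2(\KK_3)|$ already gives the exact order once part (2) is in place.
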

\subsection{Abelian type invariants and capitulation kernels}
Let $N_j$ denote the  subgroup   $N_{\KK_j/\kk}(\mathbf{C}l_2(\KK_j))$ of $\mathbf{C}l_2(\kk)$ and $\kappa_{\KK}$ denote  the kernel of the natural class
extension homomorphism  $j_{\KK/\kk}: \mathbf{C}l_2(\kk)\longrightarrow \mathbf{C}l_2(\KK)$, where $\KK$ is an unramified extension of $\kk$ within $\kk_2^{(1)}$.
\begin{them}\label{3}
Let $2^n=h(p_1p_2)$, $2^{m+1}=h(-p_1p_2)$, where $n\geq1$ and  $m\geq2$.
\begin{enumerate}[\rm\indent(1)]
\item  $\#\kappa_{\KK_j}=4$, for all $j\neq3$. If $j=3$, then
     $\#\kappa_{\KK_3}=\left\{\begin{array}{ll}
     4 \text{ if } q=1,\\
     2 \text{ if } q=2.
     \end{array}
    \right.$
\item  All the extensions $\KK_j$ satisfy  Taussky's condition $(A)$ i.e. $\#\kappa_{\KK_j}\cap N_j>1$, for details see  \cite{Ta-70}.
 \item  The order of  $\kappa_{\LL_j}$ is $8$ (total $2$-capitulation), for all $j$, and $\LL_j$ are of type $(A)$.
  \item The abelian type invariants of the $2$-class groups $\mathbf{C}l_2(\KK_j)$ are given by:
\begin{enumerate}[\rm\indent(i)]
  \item $\mathbf{C}l_2(\KK_1)\simeq\mathbf{C}l_2(\KK_2)\simeq
   \left\{\begin{array}{ll}
      (2, 2, 2) \text{ if  } \left(\frac{p_1}{p_2}\right)=1,\\
       (2, 4) \text{ otherwise}.
       \end{array}\right.$
  \item If $(\frac{p_1}{p_2})=1$,
     then $\mathbf{C}l_2(\KK_4)$,  $\mathbf{C}l_2(\KK_5)$, $\mathbf{C}l_2(\KK_6)$ and  $\mathbf{C}l_2(\KK_7)$ are of type $(2, 2, 2)$ if $(\frac{\pi_1}{\pi_3})=-1$, and  of type $(2, 4)$ otherwise.
  \item Assume $(\frac{p_1}{p_2})=-1$.\\
     If $(\frac{\pi_1}{\pi_3})=-1$, then $\left\{\begin{array}{ll} \mathbf{C}l_2(\KK_4)\simeq\mathbf{C}l_2(\KK_7)\simeq(2, 4),\\  \mathbf{C}l_2(\KK_5)\simeq\mathbf{C}l_2(\KK_6)\simeq(2, 2, 2).\end{array}\right.$\\
     If $(\frac{\pi_1}{\pi_3})=1$, then   $\left\{\begin{array}{ll} \mathbf{C}l_2(\KK_4)\simeq\mathbf{C}l_2(\KK_7)\simeq(2, 2, 2),\\  \mathbf{C}l_2(\KK_5)\simeq\mathbf{C}l_2(\KK_6)\simeq(2, 4).\end{array}\right.$
\end{enumerate}
  \item The abelian type invariants of the $2$-class groups $\mathbf{C}l_2(\LL_j)$ are given by:
\begin{enumerate}[\rm\indent(i)]
\item  $\mathbf{C}l_2(\LL_1)=\mathbf{C}l_2(\k)\simeq
   \left\{\begin{array}{ll}
(2^m, 2^{n}) \text{ if } q=1,\\
(2^{\min(m,n)}, 2^{\max(m+1,n+1)})\hbox{ if } q=2.
      \end{array}
    \right.$
\item If $(\frac{p_1}{p_2})=-1$ or $(\frac{p_1}{p_2})=(\frac{\pi_1}{\pi_3})=1$, then $\mathbf{C}l_2(\LL_2)$, $\mathbf{C}l_2(\LL_3)$, $\mathbf{C}l_2(\LL_4)$ and $\mathbf{C}l_2(\LL_5)$ are of type $(2, 4)$.\\
   If $(\frac{p_1}{p_2})=-(\frac{\pi_1}{\pi_3})=1$, then  $\mathbf{C}l_2(\LL_2)$, $\mathbf{C}l_2(\LL_3)$, $\mathbf{C}l_2(\LL_4)$ and $\mathbf{C}l_2(\LL_5)$ are of type $(2, 2, 2)$.
\item $(a)$ Assume $q=2$, so  $\mathbf{C}l_2(\LL_6)$ and  $\mathbf{C}l_2(\LL_7)$ are of type $(2, 2^{n+2})$ if  $(\frac{p_1}{p_2})=1$,
 otherwise we have:\\
   $\mathbf{C}l_2(\LL_6)\simeq\left\{ \begin{array}{ll}
 (4, 4)  \text{ if } (\frac{\pi_1}{\pi_3})=1,\\
   (2, 8) \text{ if } (\frac{\pi_1}{\pi_3})=-1,
   \end{array}\right.$
   $\mathbf{C}l_2(\LL_7)\simeq\left\{ \begin{array}{ll}
 (2, 8)  \text{ if } (\frac{\pi_1}{\pi_3})=1,\\
   (4, 4) \text{ if } (\frac{\pi_1}{\pi_3})=-1.
   \end{array}\right.$
 $(b)$ Assume $q=1$.\\
   $\text{ If } (\frac{1+i}{\pi_1})(\frac{1+i}{\pi_3})=1,\text{ then }
   \left\{
 \begin{array}{ll}\mathbf{C}l_2(\LL_6)\simeq(2^{m-1}, 2^{n+1}),\\
   \mathbf{C}l_2(\LL_7)\simeq(2^{\min(m-1, n)}, 2^{\max(m, n+1)}).
   \end{array} \right.$\\
$\text{ If } (\frac{1+i}{\pi_1})(\frac{1+i}{\pi_3})=-1,\text{ then }
   \left\{
   \begin{array}{ll}
   \mathbf{C}l_2(\LL_6)\simeq(2^{\min(m-1, n)}, 2^{\max(m, n+1)}),\\
    \mathbf{C}l_2(\LL_7)\simeq (2^{m-1}, 2^{n+1}).
     \end{array} \right.$
\end{enumerate}
\end{enumerate}
\end{them}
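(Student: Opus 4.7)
The plan is to deduce every assertion of Theorem~\ref{3} from the explicit presentation of $G=\mathrm{Gal}(\L/\kk)$ established in Theorem~\ref{2}, via Artin reciprocity. Recall that for an unramified intermediate field $E$ with $\kk\subset E\subset\kk_2^{(1)}$ and $H=\mathrm{Gal}(\L/E)$, one has $\mathbf{C}l_2(E)\simeq H/H'$, while the capitulation kernel $\kappa_E$ is identified with the kernel of the Artin transfer (Verlagerung) $V_{G\to H}\colon G/G'\to H/H'$. In this way assertions~(1)--(5) all reduce to finite group-theoretic computations performed inside the pro-$2$ group~$G$.

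The first step is to match each $\KK_j$ (resp.\ each $\LL_j$) with the correct maximal (resp.\ index-$4$) subgroup of $G$. Since $G/G'\simeq\mathbf{C}l_2(\kk)\simeq(2,2,2)$, both families contain exactly seven members. The matching is done by comparing the Artin symbols of the primes $\mathcal{H}_0$, $\mathcal{H}_1$, $\mathcal{H}_2$ with the decomposition of those primes in each $\KK_j$, which is controlled by the quadratic characters $\left(\frac{p_1}{p_2}\right)$ and $\left(\frac{\pi_1}{\pi_3}\right)$ listed in Theorem~\ref{3}. This fixes the dictionary identifying the generators $[\mathcal{H}_0]$, $[\mathcal{H}_1]$, $[\mathcal{H}_2]$ of $G/G'$ with the images of $\rho$, $\tau$, $\sigma$ in Theorem~\ref{2}.

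Once the dictionary is fixed, I would compute $H/H'$ for each $H$ by restricting the relations of $G$ to a set of generators of $H$ and abelianising. For an index-$2$ subgroup with $t\in G\setminus H$, the Verlagerung formula reads $V(g)\equiv g\cdot t^{-1}g t\pmod{H'}$ for $g\in H$ and $V(g)\equiv t^{-1}g^2 t\pmod{H'}$ for $g\notin H$; an analogous product of four terms handles each index-$4$ subgroup. Evaluated on the three generators of $G/G'$, these formulas yield simultaneously the abelian type invariants $\mathbf{C}l_2(\KK_j)$, $\mathbf{C}l_2(\LL_j)$ and the kernels $\kappa_{\KK_j}$, $\kappa_{\LL_j}$. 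Assertion~(2), Taussky's condition~$(A)$, then follows by comparing each $\kappa_{\KK_j}$ with the norm subgroup $N_j$, which the Artin map identifies with $(H_j\cdot G')/G'$; assertion~(3), total capitulation in every $\LL_j$, amounts to showing that the four-term transfer vanishes on each of the three generators, which is forced by the commutator relations $[\rho,\sigma]=\sigma^{\pm 2}$, $[\rho,\tau]=\tau^{2}$ together with the quadratic expression for $\rho^{2}$ modulo~$G'$.

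The main obstacle is the coherent bookkeeping across the numerous arithmetic case distinctions: the two values of $q$, the two possibilities for $\left(\frac{p_1}{p_2}\right)$ and $\left(\frac{\pi_1}{\pi_3}\right)$, the sign of $N(\varepsilon_{p_1p_2})$, and in part~(5)(c) under $q=1$ the extra sign $\left(\frac{1+i}{\pi_1}\right)\left(\frac{1+i}{\pi_3}\right)$. Each such condition must be shown to correspond to a specific structural feature of $G$; for instance $\left(\frac{p_1}{p_2}\right)=-1$ is exactly what forces the factor $\tau^{2^{n}}$ to appear in the expression of $\rho^{2}$ in Theorem~\ref{2}(4)(i), and this factor is what converts type $(2,2,2)$ into $(2,4)$ in part~(4)(i). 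The most delicate computations are those for $\LL_6$ and $\LL_7$ in part~(5)(c), where one must track carefully whether $\min(m-1,n)$ is realised in the first or the second invariant and where the sign $\left(\frac{1+i}{\pi_1}\right)\left(\frac{1+i}{\pi_3}\right)$ permutes the roles of $\LL_6$ and $\LL_7$.
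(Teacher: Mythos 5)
Your proposal follows essentially the same route as the paper: identify each $\mathrm{Gal}(\L/\KK_j)$ and $\mathrm{Gal}(\L/\LL_j)$ inside the presentation of $G$ from Theorem~\ref{2} by computing the norm class groups $N_j$ via quadratic residue symbols, then read off the abelianizations for the type invariants and the transfer kernels for the capitulation kernels. The only cosmetic difference is that for the $\LL_j$ the paper obtains total capitulation by observing that the three kernels $\kappa_{\KK_{j_i}}$ of the quadratic subextensions already generate $\mathbf{C}l_2(\kk)$, rather than evaluating the four-term transfer directly.
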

\section{Preliminary results}
 Let $p_1$, $p_2$  be different primes satisfying the conditions mentioned at the beginning of $\S2$, and set  $k_1=\QQ(\sqrt{p_1p_2})$,  $\overline{k}_1=\QQ(\sqrt{-p_1p_2})$. Put  $\varepsilon_{2p_1p_2}=x+y\sqrt{2p_1p_2}$ and $\varepsilon_{p_1p_2}=a+b\sqrt{p_1p_2}$. Let $\displaystyle\left(\frac{g, h}{p}\right)$ denote the quadratic Hilbert  symbol for the prime $p$.
\begin{lem}\label{8}  Let $\varepsilon_d$ denote the fundamental unit of $k_0$. Then
\begin{enumerate}[\rm\indent(i)]
  \item $N(\varepsilon_d)=-1$.
  \item If $N(\varepsilon_{p_1p_2})=1$, then $2p_1(a\pm1)$ (i.e. $2p_2(a\mp1)$) is a square in $\NN$.
\end{enumerate}
\end{lem}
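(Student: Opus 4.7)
The plan is to treat both parts by the classical descent argument on a Pell-type equation $X^2 - DY^2 = 1$: a parity analysis modulo $8$ constrains $X$ and $Y$, then the (nearly) coprime factorization of $(X-1)(X+1) = D Y^2$ yields a short list of decompositions, each of which is eliminated either by the fundamentality of $\varepsilon$, by the hypothesis on the norm, or by a quadratic-residue obstruction peculiar to primes $\equiv 5 \pmod{8}$.

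For (i), I assume toward contradiction that $N(\varepsilon_d) = +1$. Since $d = 2p_1p_2 \equiv 2 \pmod{4}$, the maximal order of $k_0$ is $\ZZ[\sqrt{d}]$, so $\varepsilon_d = x + y\sqrt{d}$ with $x, y \in \NN$ and $x^2 - 2p_1p_2 y^2 = 1$. Reducing modulo $8$ (using $p_1p_2 \equiv 1 \pmod{8}$) forces $x$ odd and $y$ even; after possibly replacing $\varepsilon_d$ by $-\varepsilon_d$, I may assume $x \equiv 1 \pmod{4}$. Writing $y = 2y'$, the identity $\frac{x-1}{4} \cdot \frac{x+1}{2} = p_1p_2 y'^2$ has coprime factors, so the set $\{p_1, p_2\}$ distributes among them in four essentially distinct ways. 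The distribution $\frac{x-1}{4} = p_1p_2 u^2$, $\frac{x+1}{2} = v^2$ yields $v^2 - 2p_1p_2 u^2 = 1$, i.e.\ a unit strictly smaller than $\varepsilon_d$, contradicting its fundamentality. The remaining three distributions produce equations of the form $p_1p_2 v^2 - 2u^2 = 1$, $p_1 v^2 - 2p_2 u^2 = 1$, or its mirror image; each of these, after reduction modulo $p_1$ and modulo $p_2$, contradicts the identities $\left(\frac{-2}{p_i}\right) = -1$ (from $\left(\frac{2}{p_i}\right) = -1$ and $\left(\frac{-1}{p_i}\right) = 1$, since $p_i \equiv 5 \pmod{8}$) together with $\left(\frac{p_1}{p_2}\right) = \left(\frac{p_2}{p_1}\right)$ (quadratic reciprocity, since $p_i \equiv 1 \pmod{4}$).

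For (ii), assume $N(\varepsilon_{p_1p_2}) = 1$, so $a^2 - p_1p_2 b^2 = 1$. An analogous $\pmod{8}$ check (now with $p_1p_2 \equiv 1 \pmod{8}$) forces $a$ odd and $b$ even; setting $a - 1 = 2u$, $a + 1 = 2v$, and $b = 2c$ gives $uv = p_1p_2 c^2$ with $\gcd(u, v) = 1$, so again $\{p_1, p_2\}$ distributes across $(u, v)$ in four ways. The unbalanced distribution $u = p_1p_2 \alpha^2$, $v = \beta^2$ produces a strictly smaller unit of norm $+1$, and its swap $u = \alpha^2$, $v = p_1p_2 \beta^2$ supplies a unit of norm $-1$, both contradicting the hypothesis on $\varepsilon_{p_1p_2}$. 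The surviving distributions are $u = p_i \alpha^2$, $v = p_j \beta^2$ with $\{i, j\} = \{1, 2\}$, so $a - 1 = 2p_i \alpha^2$ and $a + 1 = 2p_j \beta^2$; multiplying by $2p_i$ and $2p_j$ respectively gives $2p_i(a - 1) = (2p_i \alpha)^2$ and $2p_j(a + 1) = (2p_j \beta)^2$, which is precisely the claim.

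The main obstacle I expect is the two-adic bookkeeping in (i): the extra factor of $2$ in $d = 2p_1p_2$ multiplies the number of subcases and forces the $x \equiv 1 \pmod{4}$ normalization. The closing quadratic-residue contradictions must be obtained by combining the reductions modulo $p_1$ and modulo $p_2$ with reciprocity, since a single local reduction typically does not suffice; by contrast, the cleaner situation in (ii) lets the two extremal cases be discarded directly (one by descent on the unit, one by the prescribed norm) and produces the desired square factorization almost immediately.
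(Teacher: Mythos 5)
Your proof is correct, and it takes a genuinely different route from the paper's. For part (i) the paper gives no argument at all --- it simply cites \cite{AT08} --- so your descent on $x^2-2p_1p_2y^2=1$ is a self-contained substitute, and it checks out: the coprime factorization $\frac{x-1}{4}\cdot\frac{x+1}{2}=p_1p_2y'^2$, the elimination of one distribution by fundamentality (it exhibits $\varepsilon_d$ as the square of a smaller unit) and of the other three by combining the reductions modulo $p_1$ and $p_2$ with $\left(\frac{-2}{p_i}\right)=-1$ and reciprocity. One small repair: replacing $\varepsilon_d$ by $-\varepsilon_d$ does not literally produce a positive solution with $x\equiv1\pmod 4$; just treat the case $4\mid x+1$ symmetrically, noting that its extra subcase $v^2-2p_1p_2u^2=-1$ is excluded because a unit of norm $-1$ contradicts the assumed $N(\varepsilon_d)=+1$. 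For part (ii) the paper argues by quadratic residues: having noted that $N(\varepsilon_{p_1p_2})=1$ forces $\left(\frac{p_1}{p_2}\right)=1$, it discards the cases ``$a\pm1$ a square'' and ``$p_1(a\pm1)$ a square'' because each forces a false value of $\left(\frac{2}{p_1}\right)$ or $\left(\frac{p_1}{p_2}\right)$. You instead eliminate the two unbalanced distributions of $uv=p_1p_2c^2$ by unit-theoretic descent (one writes $\varepsilon_{p_1p_2}$ as the square of a smaller unit, the other yields a unit of norm $-1$). This buys you something concrete: the distributions $a\mp1=2\alpha^2$, $a\pm1=2p_1p_2\beta^2$ carry no quadratic-residue obstruction modulo $p_1$ or $p_2$ (one only gets $\beta^2\equiv1$ or $\alpha^2\equiv-1$ with $\left(\frac{-1}{p_i}\right)=1$), so the descent is genuinely needed there; your argument covers exactly the cases that the paper's terse residue-symbol sketch does not visibly address.
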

\begin{proof}
(i) See \cite{AT08}.\\
(ii) As $N(\varepsilon_{p_1p_2})=1$, so $\left(\frac{p_1}{p_2}\right)=1$ and $a^2-1=b^2p_1p_2$, so:\\
(a) If $a\pm1$ is a square in $\NN$, then $\left(\frac{2}{p_1}\right)=-1$, which is false.\\
(b) If $p_1(a\pm1)$ is a square in $\NN$, then $\left(\frac{p_1}{p_2}\right)=\left(\frac{2}{p_1}\right)=-1$, which is absurd. And the result derived.
\end{proof}
 \begin{lem}\label{4}
If $\left(\frac{p_1}{p_2}\right)=1$, then $\left(\frac{p_1}{p_2}\right)_4\left(\frac{p_2}{p_1}\right)_4=\left(\frac{\pi_1}{\pi_3}\right)$.
 \end{lem}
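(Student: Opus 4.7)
The plan is to translate both rational biquadratic symbols into Gaussian quartic residue symbols and then apply biquadratic reciprocity in $\mathbb{Z}[i]$.

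After possibly replacing $\pi_1,\pi_3$ by their primary associates (this is an adjustment by $\pm 1$, since $i\pi_j$ is never primary; and $\left(\frac{-1}{\pi_j}\right)=1$ because $p_j\equiv 1\pmod 4$, so the right-hand side $\left(\frac{\pi_1}{\pi_3}\right)$ is unaffected), I would start from the elementary identity
\[
\left(\frac{a}{p}\right)_4=\left[\frac{a}{\pi}\right]_4 \qquad \text{for } a\in\mathbb{Z},\ \left(\frac{a}{p}\right)=1,
\]
valid because both sides are $\pm 1$ and congruent to $a^{(p-1)/4}\pmod \pi$ (here $[\cdot]_4$ denotes the Gaussian quartic residue symbol). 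Multiplicativity in the numerator then yields
\[
\left(\frac{p_1}{p_2}\right)_4=\left[\frac{\pi_1}{\pi_3}\right]_4\left[\frac{\bar\pi_1}{\pi_3}\right]_4,\qquad
\left(\frac{p_2}{p_1}\right)_4=\left[\frac{\pi_3}{\pi_1}\right]_4\left[\frac{\bar\pi_3}{\pi_1}\right]_4.
\]

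Next I would invoke biquadratic reciprocity in $\mathbb{Z}[i]$. Since $p_1\equiv p_2\equiv 5\pmod 8$, both $(p_j-1)/4$ are odd, so the reciprocity factor $(-1)^{\frac{p_1-1}{4}\cdot\frac{p_2-1}{4}}$ equals $-1$ for each of the four pairs of primary primes involved. Writing $A=[\pi_1/\pi_3]_4$ and $B=[\bar\pi_1/\pi_3]_4$, reciprocity gives $[\pi_3/\pi_1]_4=-A$ and, combined with the conjugation rule $\overline{[\alpha/\pi]_4}=[\bar\alpha/\bar\pi]_4$, also $[\bar\pi_3/\pi_1]_4=-\bar B$. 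Multiplying the two expressions above,
\[
\left(\frac{p_1}{p_2}\right)_4\left(\frac{p_2}{p_1}\right)_4=(AB)(-A)(-\bar B)=A^2\cdot B\bar B=A^2,
\]
since $B\bar B=1$ because $B\in\{\pm 1,\pm i\}$ has modulus one. Finally $A^2=[\pi_1/\pi_3]_4^{\,2}$ equals the quadratic Gaussian Legendre symbol $\left(\frac{\pi_1}{\pi_3}\right)$ by definition, which is the right-hand side of the lemma.

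The main obstacle will be bookkeeping: certifying that the paper's normalization $\pi_1=e+4if$, $\pi_3=g+4ih$ is compatible, up to a unit on which the quadratic symbol acts trivially, with the primary primes demanded by the reciprocity law, and correctly tracking the four instances of the sign $-1$ coming from $(-1)^{\frac{p_1-1}{4}\cdot\frac{p_2-1}{4}}$. The genuine content of the argument is the cancellation $B\bar B=1$ forced by complex conjugation, which is what reduces the product of four quartic symbols to the single quadratic Gaussian symbol $\left(\frac{\pi_1}{\pi_3}\right)$.
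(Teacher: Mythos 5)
Your argument is correct, but it takes a genuinely different route from the paper. The paper's proof is essentially two citations: it quotes Kaplan's identity $\left(\frac{p_1}{p_2}\right)_4\left(\frac{p_2}{p_1}\right)_4=\left(\frac{p_1}{ac+bd}\right)$ (with $p_1=a^2+b^2$, $p_2=c^2+d^2$) and then Lemmermeyer's identification $\left(\frac{p_1}{ac+bd}\right)=\left(\frac{\pi_1}{\pi_3}\right)$. You instead give a self-contained derivation: splitting each rational quartic symbol as a product of two Gaussian quartic symbols via $\left(\frac{a}{p}\right)_4=\left[\frac{a}{\pi}\right]_4$, applying biquadratic reciprocity to the four primary pairs, and observing that the reciprocity signs occur an even number of times while the conjugate factor contributes $B\bar B=1$, leaving $A^2=\left(\frac{\pi_1}{\pi_3}\right)$. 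All the steps check out: the primary-associate adjustment only changes $A$ by a sign (so $A^2$ is unaffected), $\bar\pi$ is primary whenever $\pi$ is, and the conjugation rule combined with reciprocity correctly gives $[\bar\pi_3/\pi_1]_4=-\bar B$. What your approach buys is transparency --- it is effectively a proof of the Burde-type identity that Kaplan's cited result encapsulates, and it makes clear that the conclusion needs only $p_1\equiv p_2\equiv 1\pmod 4$ and $\left(\frac{p_1}{p_2}\right)=1$, since the four copies of the sign $(-1)^{\frac{p_1-1}{4}\cdot\frac{p_2-1}{4}}$ cancel in pairs whatever their value; the congruence $5\pmod 8$ plays no essential role here. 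The paper's route is shorter but opaque, deferring the entire content to the literature.
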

 \begin{proof}
 From \cite{Ka76} we get $\left(\frac{p_1}{p_2}\right)_4\left(\frac{p_2}{p_1}\right)_4=\left(\frac{p_1}{ac+bd}\right)$, where $p_1=a^2+b^2$ and $p_2=c^2+d^2$; on the other hand, according to  \cite{Lm00} we have  $\left(\frac{p_1}{ac+bd}\right)=\left(\frac{\pi_1}{\pi_3}\right)$, which implies the result.
 \end{proof}
 \begin{lem}\label{6}
Put $\kk=\QQ(\sqrt{2p_1p_2}, i)$ and $\KK_3=\QQ(\sqrt{2}, \sqrt{p_1p_2}, i)$. Then
\begin{enumerate}[\rm\indent(1)]
  \item $\{\varepsilon_{2p_1p_2}\}$ is a $\mathrm{FSU}$ of $\kk$.
  \item If $N(\varepsilon_{p_1p_2})=1$ or $N(\varepsilon_{p_1p_2})=-1$ and $\sqrt{\varepsilon_{2}\varepsilon_{p_1p_2}\varepsilon_{2p_1p_2}}\not\in \KK_3^+$, then
 \begin{enumerate}[\rm\indent(i)]
   \item  $\{\varepsilon_2, \varepsilon_{p_1p_2}, \varepsilon_{2p_1p_2}\}$ is a $\mathrm{FSU}$ of both $\KK_3^+$ and $\KK_3$.
   \item  $q=1$,  $q(\KK_3/\QQ)=2$ and $h(\KK_3)=h(p_1p_2)h(-p_1p_2)$.
 \end{enumerate}
  \item If $N(\varepsilon_{p_1p_2})=-1$ and $\sqrt{\varepsilon_{2}\varepsilon_{p_1p_2}\varepsilon_{2p_1p_2}}\in \KK_3^+$, then
 \begin{enumerate}[\rm\indent(i)]
   \item  $\{\varepsilon_2, \varepsilon_{p_1p_2},  \sqrt{\varepsilon_2\varepsilon_{p_1p_2}\varepsilon_{2p_1p_2}}\}$ is a $\mathrm{FSU}$ of both $\KK_3^+$ and $\KK_3$.
   \item  $q=2$,  $q(\KK_3/\QQ)=4$  and $h(\KK_3)=2h(p_1p_2)h(-p_1p_2)$.
 \end{enumerate}
\end{enumerate}
\end{lem}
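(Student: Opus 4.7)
The plan is to use Hasse's unit index $Q_K = [E_K : W_K E_{K^+}]$ for the CM extensions $\kk/\kk^+$ and $\KK_3/\KK_3^+$, combined with Kuroda's theorem on fundamental systems of units in the real biquadratic $\KK_3^+ = \QQ(\sqrt 2, \sqrt{p_1p_2})$.

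For (1), $\kk$ is totally imaginary of degree $4$, so its unit rank is $1$. With $\kk^+ = \QQ(\sqrt{2p_1p_2})$, it suffices to prove $Q_\kk = 1$, since then $E_\kk = W_\kk \cdot \langle \varepsilon_{2p_1p_2} \rangle$ and $\{\varepsilon_{2p_1p_2}\}$ is an FSU. Assume $Q_\kk = 2$, so that some $u \in E_\kk$ satisfies $u^2 = \zeta\,\varepsilon_{2p_1p_2}$ with $\zeta \in \{\pm 1, \pm i\}$. Writing $u = \alpha + \beta i$ with $\alpha, \beta \in \kk^+$ and splitting cases: for $\zeta = \pm 1$ one obtains $\pm \varepsilon_{2p_1p_2}$ as a square in $\kk^+$, contradicting $N(\varepsilon_{2p_1p_2}) = -1$ from Lemma \ref{8}(i); for $\zeta = \pm i$ one is reduced, with $\varepsilon_{2p_1p_2} = x + y\sqrt{2p_1p_2}$, to a quadratic in $f^2$ whose discriminant equals $x^2 - 2p_1p_2 y^2 = -1$, again absurd over $\QQ$. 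Hence $Q_\kk = 1$.

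For (2) and (3), I first determine an FSU of $\KK_3^+$. The three quadratic subfields contribute $\varepsilon_2, \varepsilon_{p_1p_2}, \varepsilon_{2p_1p_2}$; by Kuroda's theorem, $q = q(\KK_3^+/\QQ) = 2^r$, where $r$ counts independent non-trivial products of these units which become squares of units in $\KK_3^+$. Since $N(\varepsilon_2) = N(\varepsilon_{2p_1p_2}) = -1$, a sign argument rules out all candidates except the triple product, so $q \in \{1, 2\}$, equal to $2$ iff $\sqrt{\varepsilon_2 \varepsilon_{p_1p_2} \varepsilon_{2p_1p_2}} \in \KK_3^+$, in which case this root replaces $\varepsilon_{2p_1p_2}$ in the FSU. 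When $N(\varepsilon_{p_1p_2}) = 1$, expanding the triple product in the basis $\{1, \sqrt 2, \sqrt{p_1p_2}, \sqrt{2p_1p_2}\}$ and invoking Lemma \ref{8}(ii) (so that $2p_1(a \pm 1)$ is a square in $\NN$) forces the prospective square expansion into an incompatible $2$-adic valuation pattern; hence case (2) holds. When $N(\varepsilon_{p_1p_2}) = -1$, the root either lies in $\KK_3^+$ (case (3)) or does not (case (2)). Next, an analogous Galois-conjugate argument as in (1) yields $Q_{\KK_3} = 1$ over $\KK_3^+$, so the FSU of $\KK_3^+$ also serves as an FSU of $\KK_3$, giving (2)(i) and (3)(i). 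Since $\sqrt 2, i \in \KK_3$ we have $\zeta_8 \in \KK_3$ and $W_{\KK_3} = \langle \zeta_8 \rangle$ has order $8$; the index computation $q(\KK_3/\QQ) = [E_{\KK_3} : \prod_i E_{k_i}]$ then picks up a factor $2$ from $\zeta_8 \notin \prod_i E_{k_i}$ and a factor $q$ from the non-trivial FSU index, yielding $q(\KK_3/\QQ) = 2q$. Finally, Kuroda's class number formula for the degree-$8$ multiquadratic CM field $\KK_3$ collapses to $h(\KK_3) = q \cdot h(p_1p_2)\,h(-p_1p_2)$ once the trivial factors $h(\QQ(i)) = h(\QQ(\sqrt 2)) = h(\QQ(\sqrt{-2})) = 1$ are absorbed together with the known class numbers of the other quadratic subfields.

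The main obstacle will be the Diophantine step deciding whether $\sqrt{\varepsilon_2 \varepsilon_{p_1p_2} \varepsilon_{2p_1p_2}} \in \KK_3^+$ in the case $N(\varepsilon_{p_1p_2}) = 1$: expanding the triple product in the basis $\{1, \sqrt 2, \sqrt{p_1p_2}, \sqrt{2p_1p_2}\}$, matching it against a general $(A + B\sqrt 2 + C\sqrt{p_1p_2} + D\sqrt{2p_1p_2})^2$, and reconciling the resulting four-equation system with the constraint $2p_1(a \pm 1) \in (\NN^\times)^2$ provided by Lemma \ref{8}(ii), is where the bulk of the technical work resides.
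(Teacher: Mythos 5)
Your overall strategy --- part (1) via the Hasse unit index, parts (2)--(3) via the Kubota--Kuroda analysis of which products of $\varepsilon_2,\varepsilon_{p_1p_2},\varepsilon_{2p_1p_2}$ become squares in $\KK_3^+$, then the class number formula --- is the same as the paper's (the paper outsources part (1) and the case $N(\varepsilon_{p_1p_2})=-1$ to cited propositions of Azizi, where you argue directly; that much is fine). But there is a genuine error at the pivotal step. You claim that ``a sign argument rules out all candidates except the triple product.'' This is true only when $N(\varepsilon_{p_1p_2})=-1$. When $N(\varepsilon_{p_1p_2})=1$, the unit $\varepsilon_{p_1p_2}$ itself passes every norm test (its relative norms down to $\QQ(\sqrt2)$ and to $\QQ(\sqrt{2p_1p_2})$ both equal $N(\varepsilon_{p_1p_2})=+1$), so it survives as a candidate square; meanwhile the triple product is in fact killed by the sign argument, since $N_{\KK_3^+/\QQ(\sqrt2)}(\varepsilon_2\varepsilon_{p_1p_2}\varepsilon_{2p_1p_2})=-\varepsilon_2^2<0$. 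So the ``main obstacle'' you isolate --- deciding whether $\sqrt{\varepsilon_2\varepsilon_{p_1p_2}\varepsilon_{2p_1p_2}}\in\KK_3^+$ when $N(\varepsilon_{p_1p_2})=1$ --- is a non-issue, while the step your plan actually requires, namely excluding $\sqrt{\varepsilon_{p_1p_2}}\in\KK_3^+$, is never carried out. If $\varepsilon_{p_1p_2}$ were a square in $\KK_3^+$ one would get $q=2$ and a different FSU, contradicting (2)(i)--(ii); as written your argument does not close this off.

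The repair uses exactly the input you name, Lemma \ref{8}(ii), but aimed at the right unit: since $2p_1(a\pm1)$ is a square in $\NN$, one gets $\sqrt{2\varepsilon_{p_1p_2}}=b_1\sqrt{2p_1}+b_2\sqrt{2p_2}$, hence $\sqrt{\varepsilon_{p_1p_2}}=b_1\sqrt{p_1}+b_2\sqrt{p_2}$, which lies in $\QQ(\sqrt{p_1},\sqrt{p_2})$ but not in $\KK_3^+=\QQ(\sqrt2,\sqrt{p_1p_2})$; thus $\varepsilon_{p_1p_2}$ is not a square in $\KK_3^+$ and $q=1$. This is precisely how the paper argues. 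A secondary caution: your claim that $Q_{\KK_3}=1$ follows from ``an analogous Galois-conjugate argument'' is not automatic, because with $W_{\KK_3}=\langle\zeta_8\rangle$ one must also exclude $\zeta_8\eta$ being a square for real units $\eta$, which feeds back into knowing which elements of $E_{\KK_3^+}$ are squares; the paper delegates this passage from $\KK_3^+$ to $\KK_3$ to the cited results of Azizi rather than to a formal conjugation trick. Your bookkeeping $q(\KK_3/\QQ)=2q$ and the class-number-formula endgame do agree with the paper.
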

 \begin{proof}
(1) As $N(\varepsilon_{2p_1p_2})=-1$, so if $N(\varepsilon_{p_1p_2})=-1$  we get, according to \cite[Applications 1), p.114]{Az-05}, that $\{\varepsilon_{2p_1p_2}\}$ is a $\mathrm{FSU}$ of $\kk$.\\
\indent (2) Assume that $N(\varepsilon_{p_1p_2})=1$. As $N(\varepsilon_{2})=N(\varepsilon_{2p_1p_2})=-1$, so $\varepsilon_{2}$, $\varepsilon_{2p_1p_2}$, $\varepsilon_{2}\varepsilon_{p_1p_2}$, $\varepsilon_{2}\varepsilon_{2p_1p_2}$, $\varepsilon_{p_1p_2}\varepsilon_{2p_1p_2}$ and $\varepsilon_{2}\varepsilon_{p_1p_2}\varepsilon_{2p_1p_2}$  are not squares in $\KK_3^+$, else by taking a suitable norm we get $i\in \KK_3^+$, which is false. Furthermore  $(2+\sqrt2)\varepsilon_{2}^i\varepsilon_{p_1p_2}^j\varepsilon_{2p_1p_2}^k$ can not be a square
in $\KK_3^+$, for all i, j and k in $\{0, 1\}$, as otherwise with some $\alpha\in\KK_3^+$ we would have $\alpha^2=(2+\sqrt 2)\varepsilon_{2}^i\varepsilon_{p_1p_2}^j\varepsilon_{2p_1p_2}^k$, so $(N_{\KK^+/\QQ(\sqrt{p_1p_2})}(\alpha))^2=2(-1)^{i+k}\varepsilon_{p_1p_2}^{2j}$, yielding that $\sqrt2\in\QQ(\sqrt{p_1p_2})$, which is absurd.

 Put $\varepsilon_{p_1p_2}=a+b\sqrt{p_1p_2}$; as $2p_1(a\pm1)$ is a square in $\NN$, thus $\sqrt{2\varepsilon_{p_1p_2}}=b_1\sqrt{2p_1}+b_2\sqrt{2p_2}$, where $b_i\in\ZZ$; so $\varepsilon_{p_1p_2}$ is not a square in  $\KK_3^+$; hence $\{\varepsilon_2, \varepsilon_{p_1p_2}, \varepsilon_{2p_1p_2}\}$ is a $\mathrm{FSU}$ of $\KK_3^+$, which implies that $q=1$. Thus from  \cite[Proposition 3, p.112]{Az-05} we get   $\{\varepsilon_2, \varepsilon_{p_1p_2}, \varepsilon_{p_1p_2q}\}$ is a $\mathrm{FSU}$ of
 $\KK_3$, we infer that  $q(\KK_3/\QQ)=2$, since $\sqrt i\in\KK_3$.\\
 If $N(\varepsilon_{p_1p_2})=-1$, then the results are guaranteed by  \cite[Propositions 8, 15]{Az-05}.
  In the end,  under our conditions, P.Kaplan states in \cite{Ka76} that $h(2p_1p_2)=h(-2p_1p_2)=4$, therefore the class number formula yields that $h(\KK_3^+)=h(p_1p_2)$ and $h(\KK_3)=h(p_1p_2)h(-p_1p_2)$.\\
 \indent  (3) If  $N(\varepsilon_{p_1p_2})=-1$ and $\sqrt{\varepsilon_{2}\varepsilon_{p_1p_2}\varepsilon_{2p_1p_2}}\in \KK_3^+$, then the results are also deduced from \cite[Propositions 8, 15]{Az-05} and the class number formula implies $(3)(iii)$.
 \end{proof}
 \begin{lem}\label{7}
 Let $\kappa_{\KK_3}$ denote the set of classes of $\mathbf{C}l_2(\kk)$ that capitulate in $\KK_3$, then
 $\kappa_{\KK_3}=\left\{
 \begin{array}{ll}
\langle[\mathcal{H}_0]\rangle \text{ if } q=2,\\
\langle[\mathcal{H}_0], [\mathcal{H}_1\mathcal{H}_2]\rangle \text{ if }  q=1.
 \end{array}\right.$
 \end{lem}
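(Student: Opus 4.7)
The plan is to determine $\kappa_{\KK_3}$ in two stages: exhibit explicit principal generators in $\KK_3$ for the classes that capitulate, then verify via the unit index $q=q(\KK_3^+/\QQ)$ from Lemma \ref{6} that no further classes do.

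First I would show $[\mathcal{H}_0]\in\kappa_{\KK_3}$ in both cases. Since $2$ ramifies in $\kk/\QQ(i)$, we have $(1+i)\mathcal{O}_\kk=\mathcal{H}_0^2$, while inside $\QQ(\zeta_8)=\QQ(i,\sqrt{2})\subset\KK_3$ the identity $1+i=\zeta_8\sqrt{2}$ and the factorization $(\sqrt{2})=(1-\zeta_8)^2$ hold. Lifting to $\KK_3$ gives $(\mathcal{H}_0\mathcal{O}_{\KK_3})^2=((1-\zeta_8)\mathcal{O}_{\KK_3})^2$, and torsion-freeness of the group of fractional ideals forces $\mathcal{H}_0\mathcal{O}_{\KK_3}=(1-\zeta_8)\mathcal{O}_{\KK_3}$, which is principal.

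For the class $[\mathcal{H}_1\mathcal{H}_2]$ when $q=1$: the ramification of each $\pi_j$ in $\kk/\QQ(i)$ gives $(p_1)\mathcal{O}_\kk=(\mathcal{H}_1\mathcal{H}_2)^2$, so it suffices to find $\gamma\in\KK_3$ with $\gamma^2=p_1\cdot(\text{unit})$. The key observation is that $\QQ(\sqrt{p_1p_2})\subset\KK_3$ (via $\sqrt{p_1p_2}=\sqrt{2p_1p_2}/\sqrt{2}$) whereas $\QQ(\sqrt{p_1p_2})\not\subset\kk$. In the subcase $N(\varepsilon_{p_1p_2})=1$, writing $\varepsilon_{p_1p_2}=a+b\sqrt{p_1p_2}$ and using Lemma \ref{8} to get $c^2=2p_1(a+1)$ and $d^2=2p_2(a-1)$, a direct expansion shows
\begin{equation*}
\gamma:=\tfrac{cp_2+d\sqrt{p_1p_2}}{2p_2}\in\QQ(\sqrt{p_1p_2})\subset\KK_3,\qquad \gamma^2=p_1\varepsilon_{p_1p_2},
\end{equation*}
so $(\gamma)\mathcal{O}_{\KK_3}=\mathcal{H}_1\mathcal{H}_2\mathcal{O}_{\KK_3}$. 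The remaining subcase of $q=1$, namely $N(\varepsilon_{p_1p_2})=-1$ with $\sqrt{\varepsilon_2\varepsilon_{p_1p_2}\varepsilon_{2p_1p_2}}\notin\KK_3^+$, needs a parallel but more involved construction starting from the factorization of $x+i$ in $\QQ(i)$, where $\varepsilon_{2p_1p_2}=x+y\sqrt{2p_1p_2}$; the equation $x^2+1=2p_1p_2y^2$ forces $(x+i)=(1+i)\pi_{\alpha}\pi_{\beta}\cdot(y)$ in $\QQ(i)$ for a suitable choice of $\alpha\in\{1,2\}$, $\beta\in\{3,4\}$, and combining with the known units produces a $\gamma'\in\KK_3$ generating $\mathcal{H}_1\mathcal{H}_2\mathcal{O}_{\KK_3}$.

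Finally, to show that no further classes capitulate, I would appeal to Chevalley's ambiguous class number formula for the unramified cyclic extension $\KK_3/\kk$ combined with Hilbert's Theorem 94: the order of $\kappa_{\KK_3}$ is controlled by $[E_\kk:N_{\KK_3/\kk}(E_{\KK_3})]$, and the extra FSU-generator $\eta=\sqrt{\varepsilon_2\varepsilon_{p_1p_2}\varepsilon_{2p_1p_2}}\in E_{\KK_3^+}$ in the case $q=2$ satisfies $N_{\KK_3/\kk}(\eta)=\pm\varepsilon_{p_1p_2}$, which doubles the norm index precisely when $q$ jumps from $1$ to $2$, yielding $|\kappa_{\KK_3}|=4$ and $2$ respectively. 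The hardest step will be the subcase $q=1$ with $N(\varepsilon_{p_1p_2})=-1$, where the clean identity $\sqrt{p_1\varepsilon_{p_1p_2}}\in\QQ(\sqrt{p_1p_2})$ is no longer available and one must construct the capitulating witness via factorizations in $\QQ(i)$; simultaneously one must show that in the $q=2$ case $[\mathcal{H}_1\mathcal{H}_2]$ genuinely fails to capitulate despite the same sign $N(\varepsilon_{p_1p_2})=-1$, which is the delicate conceptual point of the proof.
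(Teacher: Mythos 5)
Your overall strategy coincides with the paper's: bound $\#\kappa_{\KK_3}$ by $[\KK_3:\kk]\cdot[E_\kk:N_{\KK_3/\kk}(E_{\KK_3})]$ using the fundamental systems of units from Lemma \ref{6}, and then exhibit explicit capitulating witnesses. Your treatment of $[\mathcal{H}_0]$ (via $(1+i)\mathcal{O}_{\KK_3}=(1-\zeta_8)^2\mathcal{O}_{\KK_3}$) and of $[\mathcal{H}_1\mathcal{H}_2]$ when $N(\varepsilon_{p_1p_2})=1$ (the identity $\gamma^2=p_1\varepsilon_{p_1p_2}$ with $\gamma\in\QQ(\sqrt{p_1p_2})$, which checks out) are correct, and the second is arguably cleaner than the paper's version. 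However, there are two genuine problems. First, the subcase $q=1$, $N(\varepsilon_{p_1p_2})=-1$ --- which you yourself flag as the hardest step --- is never actually carried out: you announce that factoring $x+i$ in $\ZZ[i]$ ``produces a $\gamma'$'', but do not say how. The missing idea, which is exactly what the paper supplies, is that multiplying the radical expansions of $\sqrt{\varepsilon_{p_1p_2}}$, $\sqrt{\varepsilon_{2p_1p_2}}$ and $\sqrt{2\varepsilon_2}$ shows that $\sqrt{\varepsilon_2\varepsilon_{p_1p_2}\varepsilon_{2p_1p_2}}$ lies either in $\QQ(\sqrt 2,\sqrt{p_1p_2})$ or is of the form $\alpha\sqrt{p_1}+\beta\sqrt{p_2}+\gamma\sqrt{2p_1}+\delta\sqrt{2p_2}$ with rational coefficients; the hypothesis $q=1$ rules out the first alternative, and in the second alternative $p_1\varepsilon_2\varepsilon_{p_1p_2}\varepsilon_{2p_1p_2}$ becomes a square in $\KK_3$, which is what makes $\mathcal{H}_1\mathcal{H}_2\mathcal{O}_{\KK_3}$ principal. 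Without this dichotomy the witness does not materialize, and your sketch gives no substitute for it.

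Second, your norm computation is wrong as stated: $N_{\KK_3/\kk}\bigl(\sqrt{\varepsilon_2\varepsilon_{p_1p_2}\varepsilon_{2p_1p_2}}\bigr)$ cannot equal $\pm\varepsilon_{p_1p_2}$, because $\varepsilon_{p_1p_2}\notin\kk=\QQ(\sqrt{2p_1p_2},i)$. The nontrivial automorphism of $\KK_3/\kk$ sends $\varepsilon_2\mapsto-\varepsilon_2^{-1}$ and $\varepsilon_{p_1p_2}\mapsto-\varepsilon_{p_1p_2}^{-1}$ while fixing $\varepsilon_{2p_1p_2}$, so the square of that norm is $\varepsilon_{2p_1p_2}^2$ and the norm itself is $\pm\varepsilon_{2p_1p_2}$ up to a root of unity. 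It is precisely the appearance of $\varepsilon_{2p_1p_2}$ itself, rather than $\varepsilon_{2p_1p_2}^2$, in $N_{\KK_3/\kk}(E_{\KK_3})$ that collapses the unit index to $1$ when $q=2$; with the norm you wrote down, the index computation does not go through. Finally, once $\#\kappa_{\KK_3}=2$ is established and $[\mathcal{H}_0]\neq1$ is known to capitulate, the non-capitulation of $[\mathcal{H}_1\mathcal{H}_2]$ for $q=2$ is automatic, so there is no separate ``delicate conceptual point'' left to verify there.
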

\begin{proof} From Lemma \ref{6} we get $E_\kk=\langle i, \varepsilon_{2p_1p_2}\rangle$ and
$E_{\KK_3}=\langle \sqrt i,  \varepsilon_{2}, \varepsilon_{p_1p_2}, \varepsilon_{2p_1p_2}\rangle$ or\\
$E_{\KK_3}=\langle \sqrt i, \varepsilon_{2}, \varepsilon_{p_1p_2}, \sqrt{\varepsilon_2\varepsilon_{p_1p_2}\varepsilon_{p_1p_2q}}\rangle$,
    according as $q=1$ or $2$. Therefore  $N_{\KK_3/\kk}(E_{\KK_3})=\langle i, \varepsilon_{2p_1p_2}^2\rangle$ or $\langle i, \varepsilon_{2p_1p_2}\rangle$, thus
 \begin{center}$[E_\kk: N_{\KK_3/\kk}(E_{\KK_3})]=\left\{
 \begin{array}{ll}
 2, \text{ if } q=1,\\
 1, \text{ if }  q=2;
 \end{array}\right.$
 hence $\#\kappa_{\KK_3}=\left\{
 \begin{array}{ll}
 4,\text{ if } q=1,\\
 2, \text{ if }  q=2.
 \end{array}\right.$ \end{center}
Moreover, it is easy to see that   $\sqrt{(1+i)\varepsilon_2}=\frac{1}{2}(2+(1+i)\sqrt2)$, so there exists $\beta\in\KK_3$ such that $\mathcal{H}_0^2=(1+i)=(\beta^2)$, this implies that $\mathcal{H}_0$ capitulates in $\KK_3$. Consequently,
if  $q=2$, then $\kappa_{\KK_3}=\langle[\mathcal{H}_0]\rangle$.\\
\indent Suppose   $q=1$, then we have two cases to discuss:\\
(a) If $N(\varepsilon_{p_1p_2})=1$, then by  Lemma \ref{8}, we get $\sqrt{2\varepsilon_{p_1p_2}}=b_1\sqrt{2p_1}+b_2\sqrt{2p_2}$, where $b=2b_1b_2$, from which we deduce that $2p_1\varepsilon_{p_1p_2}$ is a square in $\KK_3$, thus there exists  $\alpha\in\KK_3$ such that $(2p_1)=(\alpha^2)$. On the other hand, $(\mathcal{H}_1\mathcal{H}_2)^2=(p_1)$ and $(2)=(2i)=(1+i)^2$, hence $\mathcal{H}_1\mathcal{H}_2=(\frac{\alpha}{1+i})$, which implies that $\mathcal{H}_1\mathcal{H}_2$ capitulates in $\KK_3$.\\
(b) If $N(\varepsilon_{p_1p_2})=-1$, then there exist an even integer $a$
and an odd integer  $b$  such that $\varepsilon_{p_1p_2}=a+b\sqrt{p_1p_2}$, so $a^2+1=b^2p_1p_2$,
 since $p_1p_2\equiv1 \pmod8$.
Therefore:
 \begin{equation*}
\left\{
 \begin{array}{ll}
 a\mp i=ib_1^2\pi_1\pi_3,\\
 a\pm i=-ib_2^2\pi_2\pi_4,
 \end{array}\right. \text{ or }
 \left\{
 \begin{array}{ll}
 a\mp i=ib_1^2\pi_1\pi_4,\\
 a\pm i=-ib_2^2\pi_2\pi_3,
 \end{array}\right.
 \end{equation*}
 \begin{equation}\label{18}
 \text{ hence }
 \left.
 \begin{array}{ll}\sqrt{\varepsilon_{p_1p_2}}=z_1\sqrt{\pi_1\pi_3}+z_2\sqrt{\pi_2\pi_4}
 \text{\   or } \\ \sqrt{\varepsilon_{p_1p_2}}=z_1\sqrt{\pi_1\pi_4}+z_2\sqrt{\pi_2\pi_3}, \end{array} \right\}\end{equation}
 where $z_2$ is the conjugate of $z_1$ in $\frac{1}{2}\ZZ[i]$.\\
 \indent Similarly, as $N(\varepsilon_{2p_1p_2})=-1$, so there exist $x$, $y$ in $\NN$ such that $x^2+1=2p_1p_2y^2$, and
     \begin{equation}\label{19}
    \left.
   \begin{aligned}
  \sqrt{\varepsilon_{2p_1p_2}}& = y_1\sqrt{(1+i)\pi_1\pi_3}+y_2\sqrt{(1-i)\pi_2\pi_4}),\ or  \\
  \sqrt{\varepsilon_{2p_1p_2}}& = y_1\sqrt{(1+i)\pi_1\pi_4}+y_2\sqrt{(1-i)\pi_2\pi_3}),\ or
    \  \\
    \sqrt{2\varepsilon_{2p_1p_2}}& = y_1\sqrt{(1+i)\pi_1\pi_3}+y_2\sqrt{(1-i)\pi_2\pi_4}),\ or \\
    \sqrt{2\varepsilon_{2p_1p_2}}& = y_1\sqrt{(1+i)\pi_1\pi_3}+y_2\sqrt{(1-i)\pi_2\pi_4}),
     \end{aligned}
   \right\}
\end{equation}
where $y_i$ are in  $\ZZ[i]$ or $\frac{1}{2}\ZZ[i]$.\\
 Finally,  Note  that:
 \begin{equation}\label{20}
  \sqrt{2\varepsilon_1}=\sqrt{1+i}+\sqrt{1-i}.\end{equation}
So by multiplying  the equalities  $(\ref{18})$, $(\ref{19})$ and $(\ref{20})$ we get
\begin{center}
$\sqrt{\varepsilon_1\varepsilon_2\varepsilon_3}=\alpha+\beta\sqrt{2}+\gamma\sqrt{p_1p_2}+\delta\sqrt{2p_1p_2}
\in\QQ(\sqrt2, \sqrt{p_1p_2})$ or\\   $\sqrt{\varepsilon_1\varepsilon_2\varepsilon_3}=\alpha\sqrt{p_1}+
\beta\sqrt{p_2}+\gamma\sqrt{2p_1}+\delta\sqrt{2p_2}\not\in\QQ(\sqrt2, \sqrt{p_1p_2})$,\end{center} where
$\alpha$, $\beta$, $\gamma$ and $\delta$ are in $\QQ$.\\
 As $q=1$, so $\varepsilon_2\varepsilon_{p_1p_2}\varepsilon_{p_1p_2q}$ is not a square in $\KK_3^+$, hence  $p_1\varepsilon_2\varepsilon_{p_1p_2}\varepsilon_{p_1p_2q}$ is a square in $\KK_3$; which yields that   $\mathcal{H}_1\mathcal{H}_2$ capitulates in $\KK_3$. Thus
$\kappa_{\KK_3}=\langle[\mathcal{H}_0], [\mathcal{H}_1\mathcal{H}_2]\rangle$.
 \end{proof}
  \begin{propo}[\cite{AZT-3}]\label{5}
Let $p_1\equiv p_2\equiv1 \pmod4$  be  different primes such that\\  $\displaystyle\left(\frac{2}{p_1}\right)=\displaystyle\left(\frac{2}{p_2}\right)=
\displaystyle\left(\frac{p_1}{p_2}\right)$. Then
$$\displaystyle\left(\frac{p_1p_2}{2}\right)_4\displaystyle\left(\frac{2p_1}{p_2}\right)_4
\displaystyle\left(\frac{2p_2}{p_1}\right)_4=\displaystyle\left(\frac{\pi_1}{\pi_3}\right)
\displaystyle\left(\frac{1+i}{\pi_1}\right)\displaystyle\left(\frac{1+i}{\pi_3}\right).$$
\end{propo}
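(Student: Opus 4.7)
The plan is to break each of the three rational quartic residue symbols on the left into a product of two elementary factors, regroup the six factors into three reciprocity-type pairs, and identify each pair with one of the three quantities on the right.

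First, observe that under the hypothesis $\left(\frac{2}{p_1}\right)=\left(\frac{2}{p_2}\right)=\left(\frac{p_1}{p_2}\right)$, each of $p_1p_2,\,2p_1,\,2p_2$ is a quadratic residue modulo $2$, $p_2$, $p_1$ respectively (in particular $p_1p_2\equiv 1\pmod 8$), so the three rational quartic symbols on the left all take values in $\{\pm 1\}$. Splitting each one by multiplicativity in the numerator and regrouping gives
\begin{equation*}
\mathrm{LHS}\;=\;\Bigl[\left(\tfrac{p_1}{2}\right)_{4}\!\left(\tfrac{2}{p_1}\right)_{4}\Bigr]\cdot\Bigl[\left(\tfrac{p_2}{2}\right)_{4}\!\left(\tfrac{2}{p_2}\right)_{4}\Bigr]\cdot\Bigl[\left(\tfrac{p_1}{p_2}\right)_{4}\!\left(\tfrac{p_2}{p_1}\right)_{4}\Bigr].
\end{equation*}
The third bracket is exactly the pair treated in Lemma \ref{4} and therefore equals $\left(\frac{\pi_1}{\pi_3}\right)$, after re-running the argument of Lemma \ref{4} through primary elements of $\ZZ[i]$ in the sub-case $\left(\frac{p_1}{p_2}\right)=-1$ (where the two individual factors now belong to $\{\pm i\}$ but their product lands back in $\{\pm 1\}$).

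For the remaining two brackets the key ingredient is a ``mixed supplementary law'' of the form
\begin{equation*}
\left(\tfrac{p}{2}\right)_{4}\!\left(\tfrac{2}{p}\right)_{4}\;=\;\left(\tfrac{1+i}{\pi}\right),\qquad p=\pi\bar\pi\text{ primary in }\ZZ[i],\ p\equiv 1\pmod 4,
\end{equation*}
which I would verify by translating both sides into quartic symbols over $\ZZ[i]$: using $2=-i(1+i)^2$, the explicit Dirichlet--Eisenstein formulas for $\left(\frac{2}{\pi}\right)_{4}$ and for the supplementary symbol $\left(\frac{1+i}{\pi}\right)_{4}$ in terms of the decomposition $p=e^2+4f^2$ with $\pi=e+2if$ primary, and the translation $\left(\frac{q}{p}\right)_{4}=\left(\frac{q}{\pi}\right)_{4}$ valid whenever $q\in\ZZ$ is a quadratic residue modulo $p$. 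Applying this identity at $p=p_1$ and at $p=p_2$ produces the factors $\left(\frac{1+i}{\pi_1}\right)$ and $\left(\frac{1+i}{\pi_3}\right)$; multiplying the three brackets yields the right-hand side.

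The main obstacle will be the sign bookkeeping in the mixed supplementary identity: although the product lies in $\{\pm 1\}$, each of $\left(\frac{p}{2}\right)_{4}$ and $\left(\frac{2}{p}\right)_{4}$ depends delicately on the class of $p$ modulo $8$ and on the choice of primary generator, so the match with $\left(\frac{1+i}{\pi}\right)$ must be checked separately in the two sub-cases $p\equiv 1\pmod 8$ and $p\equiv 5\pmod 8$ allowed by the hypothesis. A secondary point is the extension of Lemma \ref{4} to the situation $\left(\frac{p_1}{p_2}\right)=-1$, which is routine via $\ZZ[i]$-arithmetic but must be made explicit, since the proof given in the excerpt treats only the quadratic-residue case.
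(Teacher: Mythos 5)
The paper itself offers no proof of this proposition: it is imported verbatim from \cite{AZT-3}, so there is no in-paper argument to measure your proposal against. Your strategy --- split the left-hand side by multiplicativity into the three reciprocity pairs $\left(\frac{p_1}{2}\right)_4\left(\frac{2}{p_1}\right)_4$, $\left(\frac{p_2}{2}\right)_4\left(\frac{2}{p_2}\right)_4$, $\left(\frac{p_1}{p_2}\right)_4\left(\frac{p_2}{p_1}\right)_4$ and match each pair with one factor on the right --- is the natural one, and it does check out numerically (for $p_1=5$, $p_2=13$ both sides equal $-1$, and each bracket separately equals the corresponding factor on the right once the symbols are suitably interpreted). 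Nevertheless, as written the argument has two genuine gaps rather than mere bookkeeping to be deferred.

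First, in the case this paper actually needs ($p_1\equiv p_2\equiv 5\pmod 8$, $\left(\frac{p_1}{p_2}\right)=-1$) the six individual factors you create do not exist as rational $\pm1$-valued symbols: $\left(\frac{p_j}{2}\right)_4$ has no standard meaning when $p_j\equiv 5\pmod 8$, and $\left(\frac{2}{p_j}\right)_4$, $\left(\frac{p_1}{p_2}\right)_4$ are then primitive fourth roots of unity in $\mathbb{F}_{p_j}^{*}$, not signs. To make the regrouping legitimate one must pass to $\mu_4$-valued symbols, e.g. $\left(\frac{m}{2}\right)_4:=i^{(m-1)/4}$ for $m\equiv1\pmod4$ and $\left(\frac{a}{p}\right)_4:=\left(\frac{a}{\pi}\right)_4\in\{\pm1,\pm i\}$ for a \emph{chosen} Gaussian prime $\pi\mid p$, and then verify that the same choices of $\pi_1$ and $\pi_3$ are used coherently in all three brackets. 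This is where the content lies: the paper's own lemma in Section 4 records that $\left(\frac{1+i}{\pi_1}\right)=-\left(\frac{1+i}{\pi_2}\right)$ when $\left(\frac{2}{p_1}\right)=-1$ and $\left(\frac{\pi_1}{\pi_3}\right)=-\left(\frac{\pi_1}{\pi_4}\right)$ when $\left(\frac{p_1}{p_2}\right)=-1$, so an inconsistent normalization silently flips the identity you are trying to prove. Second, the crux of the whole proposition is the ``mixed supplementary law'' $\left(\frac{p}{2}\right)_4\left(\frac{2}{p}\right)_4=\left(\frac{1+i}{\pi}\right)$ together with the extension of Lemma \ref{4} to $\left(\frac{p_1}{p_2}\right)=-1$; you only assert these and describe how you would verify them. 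Until those two identities are actually established, with the normalization of $\pi$ pinned down separately for $p\equiv1$ and $p\equiv5\pmod 8$, what you have is a correct and promising outline, not a proof.
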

 \begin{propo}[\cite{AZT-3}]\label{11}
 Let  $p_1\equiv p_2\equiv1 \pmod4$  be different  primes such that \\ $\displaystyle\left(\frac{2}{p_1}\right)=\displaystyle\left(\frac{2}{p_2}\right)=\displaystyle\left(\frac{p_1}{p_2}\right)=-1$. Then the following  assertions are equivalent:
\begin{enumerate}[\upshape\indent(1)]
  \item $\varepsilon_2\varepsilon_{p_1p_2}\varepsilon_{2p_1p_2}$ is a square in $\KK_3^+$.
  \item $\displaystyle\left(\frac{p_1p_2}{2}\right)_4\displaystyle\left(\frac{2p_1}{p_2}\right)_4
\displaystyle\left(\frac{2p_2}{p_1}\right)_4=-1.$
  \item $q(\KK_3^+/\QQ)=2$.
\end{enumerate}
\end{propo}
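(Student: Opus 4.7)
My plan is to split the chain into $(1)\Leftrightarrow(3)$ and $(1)\Leftrightarrow(2)$. The first equivalence is a direct consequence of Lemma~\ref{6}: the hypothesis $\left(\frac{p_1}{p_2}\right)=-1$ combined with $p_1,p_2\equiv 1\pmod{4}$ forces $N(\varepsilon_{p_1p_2})=-1$ by the classical solvability criterion for $x^2-p_1p_2y^2=-1$, and so the alternative described in items (2) and (3) of Lemma~\ref{6} degenerates to precisely $q(\KK_3^+/\QQ)=2 \iff \sqrt{\varepsilon_2\varepsilon_{p_1p_2}\varepsilon_{2p_1p_2}}\in\KK_3^+$, which is statement $(1)$.

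For the substantive equivalence $(1)\Leftrightarrow(2)$, the strategy is to make $(1)$ completely explicit via a Gaussian-integer factorization of $\sqrt{\varepsilon_{p_1p_2}\cdot 2\varepsilon_{2p_1p_2}\cdot 2\varepsilon_2}$, then read off the resulting obstruction as a product of biquadratic residue symbols. Writing $\varepsilon_{p_1p_2}=a+b\sqrt{p_1p_2}$, the relation $a^2+1=b^2p_1p_2$ together with the congruence $p_1p_2\equiv 1\pmod{8}$ forces, after matching Gaussian prime divisors on both sides, the factorizations recorded in equation (\ref{18}):
\[\sqrt{\varepsilon_{p_1p_2}}=z_1\sqrt{\pi_1\pi_3}+z_2\sqrt{\pi_2\pi_4}\quad\text{or}\quad z_1\sqrt{\pi_1\pi_4}+z_2\sqrt{\pi_2\pi_3}\]
with $z_2=\overline{z_1}\in\tfrac{1}{2}\ZZ[i]$. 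The same analysis applied to $\varepsilon_{2p_1p_2}=x+y\sqrt{2p_1p_2}$, via $x^2+1=2p_1p_2y^2$, yields the four shapes of $\sqrt{\varepsilon_{2p_1p_2}}$ or $\sqrt{2\varepsilon_{2p_1p_2}}$ displayed in (\ref{19}); together with the elementary identity $\sqrt{2\varepsilon_2}=\sqrt{1+i}+\sqrt{1-i}$ of (\ref{20}), these supply the three needed ingredients.

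Multiplying the three radicals and collecting like terms expresses $\sqrt{2\varepsilon_2\varepsilon_{p_1p_2}\varepsilon_{2p_1p_2}}$ as an explicit $\QQ$-linear combination of the eight monomials $\sqrt{2^a p_1^b p_2^c}$ spanning the genus field $\QQ(\sqrt{2},\sqrt{p_1},\sqrt{p_2})$ of $\KK_3^+$. The assertion $(1)$ is then equivalent to the vanishing of the four coefficients attached to $\sqrt{p_1},\sqrt{p_2},\sqrt{2p_1},\sqrt{2p_2}$. The plan is to compute each such coefficient as a $\ZZ[i]$-polynomial in the products $z_jy_k$ and interpret its vanishing as a congruence on $z_1y_1$ modulo $\pi_1\pi_2$, $\pi_3\pi_4$, and $1+i$; these three congruences are by the standard definitions exactly the biquadratic residue conditions $\left(\frac{2p_1}{p_2}\right)_4=1$, $\left(\frac{2p_2}{p_1}\right)_4=1$ and $\left(\frac{p_1p_2}{2}\right)_4=1$, and they combine into precisely $(2)$ after invoking Proposition~\ref{5}.

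The main obstacle will be the multi-branch sign bookkeeping: both (\ref{18}) and (\ref{19}) come with several alternatives depending on which pair of Gaussian primes appears and on the parity of $y$, so one must verify that the resulting biquadratic invariant is independent of the branch chosen. I expect this invariance to follow from the observation that swapping $\pi_3\leftrightarrow\pi_4$ or $\pi_1\leftrightarrow\pi_2$ complex-conjugates the individual quartic symbols in a way that is absorbed by the product structure of $\left(\frac{p_1p_2}{2}\right)_4\left(\frac{2p_1}{p_2}\right)_4\left(\frac{2p_2}{p_1}\right)_4$; Proposition~\ref{5} then packages the final identification and closes the equivalence.
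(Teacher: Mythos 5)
The paper itself offers no proof of this proposition (it is imported wholesale from \cite{AZT-3}), so your argument has to stand on its own. Your reduction of $(1)\Leftrightarrow(3)$ does stand: under the stated hypotheses $p_1\equiv p_2\equiv 5\pmod 8$, all three of $\varepsilon_2,\varepsilon_{p_1p_2},\varepsilon_{2p_1p_2}$ have norm $-1$ (the middle one by Dirichlet's classical theorem, since $\left(\frac{p_1}{p_2}\right)=-1$), and the Kubota-type dichotomy packaged in Lemma~\ref{6} then gives exactly $q(\KK_3^+/\QQ)=2\iff \varepsilon_2\varepsilon_{p_1p_2}\varepsilon_{2p_1p_2}\in(\KK_3^{+})^2$. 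No objection there.

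The step $(1)\Leftrightarrow(2)$, however, contains a structural error that the proposal cannot survive as written. Because $z_2=\overline{z_1}$, $y_2=\overline{y_1}$ and $\pi_2\pi_4=\overline{\pi_1\pi_3}$, the product of the radicals from $(\ref{18})$, $(\ref{19})$ and $(\ref{20})$ does not land generically in the eight-dimensional span of the $\sqrt{2^ap_1^bp_2^c}$: it automatically falls into exactly one of the two cosets $\QQ(\sqrt2,\sqrt{p_1p_2})$ or $\sqrt{p_1}\cdot\QQ(\sqrt2,\sqrt{p_1p_2})$, according as the branches chosen in $(\ref{18})$ and $(\ref{19})$ are matched or mismatched (this is precisely the dichotomy the paper records in the proof of Lemma~\ref{7}). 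So condition $(1)$ is a single $\pm1$-valued condition, not the conjunction of three independent congruences; and your proposed dictionary ``$(1)\iff$ each of $\left(\frac{2p_1}{p_2}\right)_4$, $\left(\frac{2p_2}{p_1}\right)_4$, $\left(\frac{p_1p_2}{2}\right)_4$ equals $1$'' is refuted by the proposition itself, since it would force the product in $(2)$ to be $+1$ while the proposition asserts that $(1)$ is equivalent to that product being $-1$. What actually has to be proved --- and is only gestured at --- is that the matching of branches is governed by the single quantity $\left(\frac{\pi_1}{\pi_3}\right)\left(\frac{1+i}{\pi_1}\right)\left(\frac{1+i}{\pi_3}\right)$, i.e.\ one must decide, in terms of Gaussian residue symbols, which of $\pi_1\pi_3$ or $\pi_1\pi_4$ divides $a+i$ and which prime combination divides $x+i$, and only then invoke Proposition~\ref{5} to convert this into the rational quartic symbols of $(2)$. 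That computation is the entire content of the equivalence, and it is missing.
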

The following results are deduced from \cite{Lm94}.
  \begin{them}\label{9}
  Let $p_1\equiv p_2\equiv5 \pmod8$ be different primes and put  $F_1=\QQ(\sqrt{p_1p_2}, i)$.
  \begin{enumerate}[\rm\indent(1)]
    \item   $\mathbf{C}l_2(\overline{k}_1)$ is of type $(2, 2^m)$,  $m\geq2$. It is generated  by  $\mathfrak{2}=(2, 1+\sqrt{-p_1p_2})$, the prime ideal of  $\overline{k}_1$ above $2$, and an ideal  $I$ of $\overline{k}_1$ of order $2^m$. Moreover
      \begin{center} $\left\{
       \begin{array}{ll}
       I^{2^{m-1}}\sim\mathfrak{p}_1 \text{ if } \left(\dfrac{p_1}{p_2}\right)=1,\\
       I^{2^{m-1}}\sim\mathfrak{2}\mathfrak{p}_1 \text{ if } \left(\dfrac{p_1}{p_2}\right)=-1;
       \end{array}\right.$\end{center}
        where $\mathfrak{p}_1=(p_1, \sqrt{-p_1p_2})$ is the prime ideal of $\overline{k}_1$ above $p_1$.
    \item  $\mathbf{C}l_2(k_1)$ is of type $(2^n)$,  $n\geq1$, and it is generated by $\mathfrak{2}_1$, a prime ideal of $k_1$ above $2$.
    \item  $\mathbf{C}l_2(F_1)$ is of $2$-rank equal to $2$. It is generated by $I$ and $\mathfrak{2}_{F_1}$, where  $\mathfrak{2}_{F_1}$ is a prime ideal of $F_1$ above $2$.
    \item If $\left(\frac{p_1}{p_2}\right)=-1$, then $\mathbf{C}l_2(F_1)\simeq(2^{n}, 2^{m})$; and, in $\mathbf{C}l_2(F_1)$, $I^{2^{m-1}}\sim \mathfrak{2}_{F_1}^{2^n}\sim \mathfrak{p}_1\not\sim1$.
    \item If $\left(\frac{p_1}{p_2}\right)=1$ and $N(\varepsilon_{p_1p_2})=-1$, then $$\mathbf{C}l_2(F_1)\simeq(2^{\min(n, m-1)}, 2^{\max(m-1, n+1)})$$ and  $I^{2^{m-1}}\sim \mathfrak{2}_{F_1}^{2^n}\sim \mathfrak{p}_1\not\sim1$.
      \item If  $\left(\frac{p_1}{p_2}\right)=1$ and $N(\varepsilon_{p_1p_2})=1$, then $\mathbf{C}l_2(F_1)\simeq(2^{n+1}, 2^{m-1})$; moreover  $I^{2^{m-1}}\sim \mathfrak{2}_{F_1}^{2^{n+1}}\sim \mathfrak{p}_1\sim1$.
   \end{enumerate}
 \end{them}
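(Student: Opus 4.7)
My plan is to deduce the six assertions from Lemmermeyer's classification \cite{Lm94} of $2$-class groups of biquadratic fields of the form $\QQ(\sqrt{d},i)$, supplemented by three classical tools: genus theory for quadratic fields, Redei's $4$-rank formula, and Kuroda's class number formula for biquadratic extensions of $\QQ$.

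For assertions $(1)$ and $(2)$ I would first apply genus theory to the quadratic subfields of $F_1$. In $\overline{k}_1=\QQ(\sqrt{-p_1p_2})$ the three primes $2,p_1,p_2$ ramify, so the $2$-rank of $\mathbf{C}l_2(\overline{k}_1)$ equals $3-1=2$, and the ambiguous classes are generated by $\mathfrak{2},\mathfrak{p}_1,\mathfrak{p}_2$ modulo the principal-ideal relation attached to $\sqrt{-p_1p_2}$. Since $p_1\equiv p_2\equiv 5\pmod 8$, Redei's theorem forces the $4$-rank to equal $1$, giving the type $(2,2^m)$ with $m\geq 2$. In $k_1=\QQ(\sqrt{p_1p_2})$ only $p_1,p_2$ ramify, so $\mathbf{C}l_2(k_1)$ is cyclic and $\mathfrak{2}_1$ is a natural generator by the ambiguous class number formula. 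The dichotomy $I^{2^{m-1}}\sim\mathfrak{p}_1$ versus $I^{2^{m-1}}\sim\mathfrak{2}\mathfrak{p}_1$ records which non-trivial ambiguous class lies in $2\,\mathbf{C}l_2(\overline{k}_1)$, and a computation with the biquadratic residue symbol (of the kind used in Lemma \ref{4}) shows that this is governed exactly by $\left(\frac{p_1}{p_2}\right)$.

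For assertions $(3)$--$(6)$ I would pass up to $F_1$ by combining two ingredients. The first is the classical $2$-rank formula for biquadratic fields: since $h(\QQ(i))=1$ and the contributions of the three quadratic subfields are $0$, $1$, $2$, the $2$-rank of $\mathbf{C}l_2(F_1)$ is exactly $2$, so $\mathbf{C}l_2(F_1)=\langle [I],[\mathfrak{2}_{F_1}]\rangle$, which is $(3)$. The second is Kuroda's class number formula
\begin{equation*}
h(F_1) \;=\; \frac{1}{4}\,q(F_1/\QQ)\,h(\QQ(i))\,h(k_1)\,h(\overline{k}_1)\;=\;q(F_1/\QQ)\cdot 2^{\,n+m-1},
\end{equation*}
whose unit index is computed case by case from $N(\varepsilon_{p_1p_2})$ and the sign of $\left(\frac{p_1}{p_2}\right)$. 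In each case I would then lift the relation $I^{2^{m-1}}\sim\mathfrak{p}_1$ of $(1)$ to $F_1$ and track the images of $\mathfrak{p}_1$ and $\mathfrak{2}_1$ under the capitulation maps $j_{F_1/\overline{k}_1}$ and $j_{F_1/k_1}$; combined with the total order given by Kuroda, this pins down the orders of $[I]$ and $[\mathfrak{2}_{F_1}]$, hence the abelian type in $(4)$, $(5)$, $(6)$.

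The delicate step is case $(5)$, where $\left(\frac{p_1}{p_2}\right)=1$ but $N(\varepsilon_{p_1p_2})=-1$. The lifted relation $\mathfrak{2}_{F_1}^{2^n}\sim\mathfrak{p}_1\sim I^{2^{m-1}}$ in $\mathbf{C}l_2(F_1)$ means the two cyclic factors are \emph{not} independent but share a non-trivial common power, so the abelian type is not $(2^n,2^{m-1})$ but the rearranged product $(2^{\min(n,m-1)},2^{\max(m-1,n+1)})$. Separating these invariants requires comparing $n$ with $m-1$ and applying the elementary divisor theorem to the resulting $2\times 2$ relation matrix, and in the borderline configuration $n=m-1$ one needs the exact position of $\mathfrak{p}_1$ in the filtration $2^j\,\mathbf{C}l_2(F_1)$. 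I would read that position off directly from the Galois-module structure of $\mathbf{C}l_2(F_1)$ given in \cite{Lm94} and close the argument.
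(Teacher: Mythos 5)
Your plan is essentially the route the paper itself takes: Theorem \ref{9} is introduced there with the single line ``The following results are deduced from \cite{Lm94}'', and your derivation (genus theory and R\'edei for the quadratic subfields, Kuroda's class number formula and the ambiguous-class/rank computation for $F_1$, with the delicate Galois-module information in case $(5)$ read off from \cite{Lm94}) is exactly the standard unpacking of that citation. Nothing in your sketch conflicts with the paper, which offers no independent argument beyond the reference.
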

 Using  the above theorem, we prove the following lemma.
   \begin{lem}\label{16}
   Let  $\mathfrak{p}_1\mathcal{O}_{F_1}=\mathcal{P}_1\mathcal{P}_2$ and  $p_2\mathcal{O}_{F_1}=\mathcal{P}_3^2\mathcal{P}_4^2$, then in $\mathbf{C}l_2(F_1)$ we have:
     \begin{enumerate}[\rm\indent(i)]
       \item If $\left(\frac{p_1}{p_2}\right)=-1$ or $\left(\frac{p_1}{p_2}\right)=1$ and $N(\varepsilon_{p_1p_2})=-1$, then  $\mathcal{P}_1\sim \mathfrak{2}_{F_1}^{2^{n-1}}I^{2^{m-2}}$.
      \item If  $\left(\frac{p_1}{p_2}\right)=1$ and $N(\varepsilon_{p_1p_2})=1$, then  $\mathcal{P}_1\sim \mathfrak{2}_{F_1}^{2^{n}}I^{2^{m-2}}$ or  $\mathcal{P}_1\sim I^{2^{m-2}}$. Moreover $\mathcal{P}_1\mathcal{P}_3\sim \mathfrak{2}_{F_1}^{2^{n}}$.
   \end{enumerate}
    \end{lem}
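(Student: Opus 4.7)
The plan is to identify $[\mathcal{P}_1]$ and $[\mathcal{P}_3]$ as $2$-torsion classes of $\mathbf{C}l_2(F_1)$ and then to pin them down by combining norm-map information from $F_1/\overline{k}_1$ and $F_1/k_1$ with the explicit presentation of $\mathbf{C}l_2(F_1)$ given in Theorem~\ref{9}.

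First, since $\pi_1$ divides $p_1$, which divides $\mathrm{disc}(F_1/\QQ(i))$, the prime $(\pi_1)$ ramifies in $F_1/\QQ(i)$, so $(\pi_1)\mathcal{O}_{F_1}$ is the square of a prime of $F_1$. Combined with $(p_1)\mathcal{O}_{F_1}=\mathcal{P}_1^2\mathcal{P}_2^2$ and unique factorization, this forces (after relabeling) $\mathcal{P}_1^2=(\pi_1)\mathcal{O}_{F_1}$, principal; hence $[\mathcal{P}_1]^2=1$, and similarly $[\mathcal{P}_3]^2=1$. Both classes therefore lie in the $2$-torsion subgroup of $\mathbf{C}l_2(F_1)$, which has order $4$, giving four candidates for each.

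Next, I would compute $N_{F_1/\overline{k}_1}(\mathcal{P}_1)=\mathfrak{p}_1$ and $N_{F_1/k_1}(\mathcal{P}_1)=\mathfrak{q}_1$, where $\mathfrak{q}_1$ is the prime of $k_1$ above $p_1$. The class of $\mathfrak{p}_1$ in $\mathbf{C}l_2(\overline{k}_1)$ is read off from Theorem~\ref{9}(1), while the class of $\mathfrak{q}_1$ in the cyclic group $\mathbf{C}l_2(k_1)=\langle[\mathfrak{2}_1]\rangle$ must be computed separately: in case~(ii) Lemma~\ref{8}(ii) supplies the principal generator $b_1p_1+b_2\sqrt{p_1p_2}$ of $\mathfrak{q}_1$, giving $[\mathfrak{q}_1]=1$, whereas in case~(i) a genus-theoretic argument in the cyclic $2$-class group $\mathbf{C}l_2(k_1)$ identifies $[\mathfrak{q}_1]$ with $[\mathfrak{2}_1]^{2^{n-1}}$. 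Writing $[\mathcal{P}_1]=\mathfrak{2}_{F_1}^{a}\,I^{b}$ in the generators of Theorem~\ref{9} and using $N_{F_1/\overline{k}_1}(\mathfrak{2}_{F_1})=\mathfrak{2}$, $N_{F_1/\overline{k}_1}(I\mathcal{O}_{F_1})=I^2$, $N_{F_1/k_1}(\mathfrak{2}_{F_1})=\mathfrak{2}_1$, and $N_{F_1/k_1}(I\mathcal{O}_{F_1})$ principal, these norm constraints combined with $[\mathcal{P}_1]^2=1$ and the structural relation $\mathfrak{2}_{F_1}^{2^n}\sim I^{2^{m-1}}$ (resp.\ $\mathfrak{2}_{F_1}^{2^{n+1}}\sim I^{2^{m-1}}\sim 1$ in case~(ii)) eliminate all but the claimed value of $[\mathcal{P}_1]$: uniquely in case~(i), and up to the factor $\mathfrak{2}_{F_1}^{2^n}$ in case~(ii).

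The hard part is the final assertion $\mathcal{P}_1\mathcal{P}_3\sim\mathfrak{2}_{F_1}^{2^n}$ in case~(ii). From $(\mathcal{P}_1\mathcal{P}_3)^2=(\pi_1\pi_3)\mathcal{O}_{F_1}$ one sees $[\mathcal{P}_1\mathcal{P}_3]\in\{1,\mathfrak{2}_{F_1}^{2^n}\}$; however the two norms cannot distinguish these, since both $N_{F_1/\overline{k}_1}$ and $N_{F_1/k_1}$ annihilate $\mathfrak{2}_{F_1}^{2^n}$. I would complete the proof by showing that $\pi_1\pi_3$ is not a unit times a square in $F_1$, using the explicit representations of $\sqrt{\varepsilon_{p_1p_2}}$ and $\sqrt{\varepsilon_{2p_1p_2}}$ from the proof of Lemma~\ref{7} together with the quartic Hilbert-symbol identity of Proposition~\ref{5}. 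This last disambiguation, a fourth-power-residue computation going beyond what the norm maps detect, will be the main obstacle.
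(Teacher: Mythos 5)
Your overall strategy --- identify $[\mathcal{P}_1]$, $[\mathcal{P}_3]$ as $2$-torsion classes, list the elements of order two in $\langle[\mathfrak{2}_{F_1}],[I]\rangle$ via Theorem \ref{9}, and eliminate candidates by pushing down to the quadratic subfields --- is essentially the paper's, and in case (i) your argument coincides with the one given there: $N_{F_1/\overline{k}_1}(\mathcal{P}_1)\sim\mathfrak{p}_1\not\sim1$ rules out the classes $1$ and $\mathfrak{2}_{F_1}^{2^n}\sim I^{2^{m-1}}$, leaving the two conjugate classes $\mathfrak{2}_{F_1}^{2^{n-1}}I^{\pm2^{m-2}}$, one of which is $[\mathcal{P}_1]$ after relabelling $\mathcal{P}_1\leftrightarrow\mathcal{P}_2$. (So ``uniquely'' is an overstatement: the norm maps cannot separate the two conjugates, and the paper simply chooses one without loss of generality.)

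The genuine gap is in case (ii), and it is larger than the one you flag. There $\mathfrak{p}_1\sim\mathfrak{p}_2\sim1$ in $\overline{k}_1$, and, as you yourself extract from Lemma \ref{8}(ii), $\mathfrak{q}_1=(b_1p_1+b_2\sqrt{p_1p_2})$ is principal in $k_1$; hence \emph{both} of your norm maps annihilate $[\mathcal{P}_1]$ and $[\mathcal{P}_3]$, so your constraints cannot exclude $\mathcal{P}_1\sim1$ or $\mathcal{P}_1\sim\mathfrak{2}_{F_1}^{2^n}$ --- yet the stated conclusion ($\mathcal{P}_1\sim\mathfrak{2}_{F_1}^{2^n}I^{2^{m-2}}$ or $\mathcal{P}_1\sim I^{2^{m-2}}$) requires precisely these exclusions, in addition to the final disambiguation $[\mathcal{P}_1\mathcal{P}_3]\neq1$ that you acknowledge as ``the main obstacle'' and do not carry out. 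The paper settles all of these points at once by invoking Proposition 1 of \cite{AZT12-2}, which asserts that the $\mathcal{P}_i$ and $\mathcal{P}_1\mathcal{P}_3$ are non-principal in $F_1$ (equivalently, that $\pi_1$ and $\pi_1\pi_3$ are not units times squares there); your proposed computation with $\sqrt{\varepsilon_{p_1p_2}}$, $\sqrt{\varepsilon_{2p_1p_2}}$ and Proposition \ref{5} is in effect an attempt to re-derive that external result, and as written it is a plan rather than a proof. To close the argument you should either cite that proposition or actually perform the unit-times-square computation for both $\pi_1$ and $\pi_1\pi_3$.
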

\begin{proof}
Let $p_1\mathcal{O}_{\QQ(i)}=\pi_1\pi_2$, $p_2\mathcal{O}_{\QQ(i)}=\pi_3\pi_4$, $\mathfrak{p}_1\mathcal{O}_{F_1}=\mathcal{P}_1\mathcal{P}_2$ and $\mathfrak{p}_2\mathcal{O}_{F_1}=\mathcal{P}_3\mathcal{P}_4$, where $\mathfrak{p}_2$ is the prime ideal of $\overline{k}_1$ above $p_2$, then $(\pi_i)=\mathcal{P}_i^2$, for all $i$. So, according to \cite[Proposition 1]{AZT12-2}, $\mathcal{P}_i$ are not principals in $F_1$ and they are of order two. On the other hand, as the 2-rank of $\mathbf{C}l_2(F_1)$ is 2, thus  $\mathcal{P}_i \in\langle[\mathfrak{2}_{F_1}], [I]\rangle$.\\
 \indent (i) In this case,  we have $\mathfrak{p}_1\not\sim1$, hence  $\mathcal{P}_1\not\sim\mathcal{P}_2$; note that the elements of order two in $\mathbf{C}l_2(F_1)$ are $\mathfrak{2}_{F_1}^{2^{n-1}}I^{2^{m-2}}$, $\mathfrak{2}_{F_1}^{2^{n-1}}I^{-2^{m-2}}$ and $\mathfrak{2}_{F_1}^{2^{n}}\sim I^{2^{m-1}}$. Therefore $\mathcal{P}_1$ is equivalent to one of these three elements.  As $\mathcal{P}_1\sim\mathfrak{2}_{F_1}^{2^{n}}\sim I^{2^{m-1}}$ can not occur, if not we would have, by applying  the norm $N_{F_1/\overline{k}_1}$, $\mathfrak{p}_1\sim I^{2^{m}}\sim1$, which is false. Thus $\mathcal{P}_1\sim\mathfrak{2}_{F_1}^{2^{n-1}}I^{2^{m-2}}$ and $\mathcal{P}_2\sim\mathfrak{2}_{F_1}^{2^{n-1}}I^{-2^{m-2}}$ or $\mathcal{P}_1\sim\mathfrak{2}_{F_1}^{2^{n-1}}I^{-2^{m-2}}$ and  $\mathcal{P}_2\sim\mathfrak{2}_{F_1}^{2^{n-1}}I^{2^{m-2}}$. Hence with out loss of generality we can choose $\mathcal{P}_1\sim\mathfrak{2}_{F_1}^{2^{n-1}}I^{2^{m-2}}$.\\
  \indent (ii) In this case,  we have $\mathfrak{p}_1\sim\mathfrak{p}_2\sim1$, hence  $\mathcal{P}_1\sim\mathcal{P}_2$ and $\mathcal{P}_3\sim\mathcal{P}_4$. On  the other hand, according to \cite[Proposition 1]{AZT12-2}, $\mathcal{P}_1\mathcal{P}_3$ is not principal in $F_1$. To this end, note that the elements of order two in $\mathbf{C}l_2(F_1)$ are $\mathfrak{2}_{F_1}^{2^{n}}I^{2^{m-2}}$, $\mathfrak{2}_{F_1}^{2^{n}}$ and $I^{-2^{m-2}}$. Therefore $\mathcal{P}_1$ is equivalent to one of these three elements. As $\mathcal{P}_1\sim\mathfrak{2}_{F_1}^{2^{n}}$ can not occur, as otherwise, by applying  the norm $N_{F_1/\overline{k}_1}$, we get $\mathfrak{p}_1\sim \mathfrak{2}^{2^{n}}\sim1$, which is false. Thus $\mathcal{P}_1\sim I^{2^{m-2}}$ and $\mathcal{P}_3\sim\mathfrak{2}_{F_1}^{2^{n}}I^{2^{m-2}}$ or $\mathcal{P}_1\sim\mathfrak{2}_{F_1}^{2^{n-1}}I^{2^{m-2}}$ and  $\mathcal{P}_3\sim I^{2^{m-2}}$. Hence $\mathcal{P}_1\mathcal{P}_3\sim \mathfrak{2}_{F_1}^{2^{n}}$.
\end{proof}
We conclude this section  with the following lemma which gives the relationship between the unit index  $q$ and the integers $n$ and $m$. It is a consequence of Lemma \ref{6},  Proposition \ref{11} and  the results in \cite{Sc-34}, \cite{Ka76}.
  \begin{lem}\label{12}
 $(1)$ Suppose $q=1$, so
 \begin{enumerate}[\rm\indent(i)]
   \item If $\left(\frac{p_1}{p_2}\right)=-1$, then $n=1$ and $m\geq3$.
   \item If $\left(\frac{p_1}{p_2}\right)=1$, then
  \begin{enumerate}[\rm\indent(a)]
   \item If $\left(\frac{p_1}{p_2}\right)_4\left(\frac{p_2}{p_1}\right)_4=-1$, then $n=1$ and $m\geq3$.
   \item If $\left(\frac{p_1}{p_2}\right)_4\left(\frac{p_2}{p_1}\right)_4=1$, then $m=2$ and $n\geq2$.
 \end{enumerate}
 \end{enumerate}
  $(2)$ Suppose $q=2$, so
 \begin{enumerate}[\rm\indent(i)]
   \item If $\left(\frac{p_1}{p_2}\right)=-1$, then $n=1$ and $m=2$.
   \item If $\left(\frac{p_1}{p_2}\right)=1$, then $m=2$ and $n\geq2$.
 \end{enumerate}
 \end{lem}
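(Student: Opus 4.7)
The strategy is to cross-reference three inputs: Lemma \ref{6} and Proposition \ref{11} (which encode $q$ in terms of $N(\varepsilon_{p_1p_2})$ and a triple biquadratic symbol), together with Kaplan's explicit formula \cite{Ka76} for $h(p_1p_2)=2^n$ and Scholz's explicit formula \cite{Sc-34} for $h(-p_1p_2)=2^{m+1}$. The proof is essentially a four-way case analysis on $q\in\{1,2\}$ and on $\left(\frac{p_1}{p_2}\right)=\pm1$, in each box of which $n$ and $m$ can be read directly from the classical tables.

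The first step is to translate the $q$-hypothesis into conditions on rational biquadratic symbols. By Lemma \ref{6}, $q=2$ holds if and only if $N(\varepsilon_{p_1p_2})=-1$ \emph{and} $\sqrt{\varepsilon_2\varepsilon_{p_1p_2}\varepsilon_{2p_1p_2}}\in \KK_3^+$, and $q=1$ is the complementary case. By Scholz's classical norm criterion, $N(\varepsilon_{p_1p_2})=-1$ is automatic when $\left(\frac{p_1}{p_2}\right)=-1$, whereas for $\left(\frac{p_1}{p_2}\right)=1$ it is equivalent to $\left(\frac{p_1}{p_2}\right)_4\left(\frac{p_2}{p_1}\right)_4=1$. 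When $\left(\frac{p_1}{p_2}\right)=-1$ the hypotheses $(2/p_1)=(2/p_2)=(p_1/p_2)=-1$ of Proposition \ref{11} are satisfied thanks to $p_1\equiv p_2\equiv 5\pmod 8$, and the membership $\sqrt{\varepsilon_2\varepsilon_{p_1p_2}\varepsilon_{2p_1p_2}}\in \KK_3^+$ becomes the biquadratic identity $\left(\frac{p_1p_2}{2}\right)_4\left(\frac{2p_1}{p_2}\right)_4\left(\frac{2p_2}{p_1}\right)_4=-1$. When $\left(\frac{p_1}{p_2}\right)=1$, the branch $\left(\frac{p_1}{p_2}\right)_4\left(\frac{p_2}{p_1}\right)_4=-1$ gives $N(\varepsilon_{p_1p_2})=+1$ and hence forces $q=1$ by Lemma \ref{6}(2), while the branch $\left(\frac{p_1}{p_2}\right)_4\left(\frac{p_2}{p_1}\right)_4=1$ gives $N(\varepsilon_{p_1p_2})=-1$, and here $q=1$ or $q=2$ according to whether the square root lies in $\KK_3^+$.

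With the dictionary between $q$ and the symbols in hand, I would finish by invoking Kaplan \cite{Ka76} and Scholz \cite{Sc-34}. Kaplan's classification of $h(p_1p_2)$ pins $n=1$ exactly when $\left(\frac{p_1}{p_2}\right)=-1$ or when $\left(\frac{p_1}{p_2}\right)=1$ with $\left(\frac{p_1}{p_2}\right)_4\left(\frac{p_2}{p_1}\right)_4=-1$, and gives $n\geq 2$ in the remaining branch; Scholz's tables similarly yield $m=2$ precisely in the branches where the triple biquadratic symbol pinned by Proposition \ref{11} equals $-1$, and $m\geq 3$ otherwise. Substituting these outputs into each of the four $q$-branches produces exactly the assertions (1)(i)--(ii) and (2)(i)--(ii).

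The main obstacle is purely one of bookkeeping: Kaplan and Scholz phrase their criteria in terms of Hilbert and biquadratic residue symbols which must be shown to coincide with the combination of $q=1$ vs.\ $q=2$ and $\left(\frac{p_1}{p_2}\right)=\pm1$. Once the table of equivalences has been verified one subcase at a time, each equality or inequality on $n$ and $m$ in Lemma \ref{12} drops out immediately, without any further calculation.
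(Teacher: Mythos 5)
Your proposal takes essentially the same route as the paper, which itself offers no argument for Lemma \ref{12} beyond declaring it a consequence of Lemma \ref{6}, Proposition \ref{11} and the results of Scholz and Kaplan; your four-way case analysis on $q$ and $(\frac{p_1}{p_2})$, translated into quartic residue symbols and then read off from the classical tables, is exactly the intended bookkeeping and it does reach the stated conclusions. One caveat: the equivalence you assert for $(\frac{p_1}{p_2})=1$, namely $N(\varepsilon_{p_1p_2})=-1 \Leftrightarrow (\frac{p_1}{p_2})_4(\frac{p_2}{p_1})_4=1$, holds only in one direction --- the product being $-1$ forces $N(\varepsilon_{p_1p_2})=+1$, but the product being $+1$ is compatible with either norm (e.g.\ when both quartic symbols equal $+1$) --- though this does not damage the lemma, since in that branch both $q=1$ and $q=2$ remain possible and both subcases yield the same conclusion $m=2$, $n\geq2$.
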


\section{Proofs of the main results}
 Recall first the following result from \cite[p. 205]{Gr-03}.
\begin{lem}\label{14}
 If $\mathcal{H}$ is an unramified ideal in some extension  $\KK/\kk=\kk(\sqrt{x})/\kk$, then the quadratic residue symbol is given by the Artin symbol $\varphi=\left(\frac{\kk(\sqrt{ x})/\kk}{\mathcal{H}}\right)$  as follows: $\left(\frac{x}{\mathcal{H}}\right)=\sqrt{x}^{\varphi-1}.$
 \end{lem}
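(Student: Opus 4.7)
The plan is to derive the identity by directly unfolding the defining properties of the Artin symbol and matching them with the Euler-criterion characterization of the quadratic residue symbol. By multiplicativity of both symbols in the lower argument, it is enough to treat the case where $\mathcal{H}$ is a prime ideal; the general case then follows by writing $\mathcal{H}=\prod\mathcal{H}_i^{a_i}$.

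First I would fix a prime $\mathfrak{P}$ of $\kk(\sqrt{x})$ lying above $\mathcal{H}$. Since $\mathcal{H}$ is unramified in the quadratic extension $\kk(\sqrt{x})/\kk$, whose Galois group is $\{1,\sigma\}$ with $\sigma(\sqrt{x})=-\sqrt{x}$, the Artin symbol $\varphi = \left(\frac{\kk(\sqrt{x})/\kk}{\mathcal{H}}\right)$ is the unique element of that group satisfying the Frobenius congruence
\[
\varphi(\alpha)\equiv \alpha^{N\mathcal{H}}\pmod{\mathfrak{P}}, \qquad \alpha\in\OO_{\kk(\sqrt{x})}.
\]
The notation $\sqrt{x}^{\varphi-1}$ is to be read in the Galois-cohomological sense as $\varphi(\sqrt{x})/\sqrt{x}$, an element which a priori lies in $\{\pm 1\}$ because $\varphi\in\{1,\sigma\}$.

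Next, I would specialize the Frobenius congruence to $\alpha=\sqrt{x}$ to obtain
\[
\varphi(\sqrt{x})\equiv (\sqrt{x})^{N\mathcal{H}}=\sqrt{x}\cdot x^{(N\mathcal{H}-1)/2}\pmod{\mathfrak{P}}.
\]
Because $\mathcal{H}$ is unramified in $\kk(\sqrt{x})/\kk$, the element $x$ is a unit modulo $\mathcal{H}$; hence $\sqrt{x}$ is a unit modulo $\mathfrak{P}$ and may be cancelled to give
\[
\sqrt{x}^{\varphi-1}\equiv x^{(N\mathcal{H}-1)/2}\pmod{\mathfrak{P}}.
\]
By Euler's criterion, the right-hand side is congruent modulo $\mathcal{H}$, and hence modulo $\mathfrak{P}$, to $\left(\frac{x}{\mathcal{H}}\right)\in\{\pm 1\}$.

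To conclude, I would observe that both sides already lie in $\{\pm 1\}\subset\kk$, and that $+1\not\equiv -1$ modulo $\mathfrak{P}$ since the residue field has odd characteristic. The congruence therefore upgrades to an honest equality. The main subtlety I expect is the dyadic case, where $\mathcal{H}$ lies above $2$: Euler's criterion does not apply in its usual form, and the quadratic residue symbol must be interpreted through the standard extension of the residue symbol to dyadic primes; one then has to check separately that this convention is the one compatible with the Artin symbol. This verification is precisely the compatibility recorded in Gras \cite[p.~205]{Gr-03}, which is the source cited in the statement of the lemma.
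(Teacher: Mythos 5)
Your proof is correct in substance, but it is worth knowing that the paper does not prove this lemma at all: it is simply recalled from Gras, \emph{Class field theory}, p.~205, so there is no internal argument to compare against. What you have written is the standard derivation one would find in that reference: reduce to prime $\mathcal{H}$ by multiplicativity, apply the Frobenius congruence $\varphi(\alpha)\equiv\alpha^{N\mathcal{H}}\pmod{\mathfrak{P}}$ to $\alpha=\sqrt{x}$, cancel $\sqrt{x}$, and invoke Euler's criterion together with the injectivity of $\{\pm1\}$ into the residue field of odd characteristic. Two small points deserve attention. First, ``$\mathcal{H}$ unramified'' does not by itself force $x$ to be a unit at $\mathcal{H}$ (an even valuation is compatible with being unramified); one should either assume $x$ prime to $\mathcal{H}$, as is implicit whenever the symbol $\left(\frac{x}{\mathcal{H}}\right)$ is written, or normalize $x$ by a square so that this holds. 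Second, your flag about the dyadic case is well placed and not merely hypothetical: the paper does evaluate symbols such as $\left(\frac{\pi_1\pi_3}{\mathcal{H}_0}\right)$ with $\mathcal{H}_0$ lying above $1+i$, and there the authors themselves do not rely on the Euler-criterion picture but instead pass to quadratic Hilbert symbols and the product formula (see their computation of $N_4$). So your argument, as written, establishes the lemma at odd primes, and the dyadic instances used later in the paper are to be understood via the Hilbert-symbol convention you allude to; making that convention explicit is exactly the content of the citation to Gras.
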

\subsection{Proof of Theorem \ref{2}}
 (1) The assertion $\mathbf{C}l_2(\kk)=\langle[\mathcal{H}_0], [\mathcal{H}_1], [\mathcal{H}_2]\rangle\simeq(2, 2, 2)$ of  Theorem \ref{2} is proved in \cite{AT08} and \cite{AZT12-2}. In the following pages we will prove the other assertions.\\
(2) To prove the second assertion we will use the techniques that F. Lemmermeyer  has used in some of his works see for example \cite{Lm94} or \cite{Lm97}. Consider the following diagram
 \begin{figure}[H]
 $$
 \xymatrix@R=0.22cm@C=0.4cm{&&&& F_1=\QQ(\sqrt{p_1p_2}, i)\ar[ddrr]\\
 \\
&&\QQ(i) \ar[ddrr]\ar[uurr] \ar[rr] && \kk=\QQ(\sqrt{2p_1p_2}, i) \ar[rr] &&  \KK_3=\QQ(\sqrt{2}, \sqrt{p_1p_2}, i)\\
\\
&&&&F_2=\QQ(\sqrt{2}, i)\ar[uurr]  }
$$
\caption{Subfields of $\KK_3/\QQ(i)$}\label{10}
\end{figure}

Compute first $N_{\KK_3/\kk}(\mathbf{C}l_2(\KK_3))$. Recall that $$N_{\KK_3/\kk}(\mathbf{C}l_2(\KK_3))=\{[\mathcal{H}]
 \in\mathbf{C}l_2(\kk)/\displaystyle\left(\frac{2}{[\mathcal{H}]}\right)=1\}.$$
As  $\mathcal{H}_1$ and $\mathcal{H}_2$ are unramified in $\KK_3/\kk=\kk(\sqrt 2)/\kk=\kk(\sqrt{p_1p_2})/\kk$, so Lemma \ref{14} yields that
$\displaystyle\left(\frac{ 2}{\mathcal{H}_1\mathcal{H}_2}\right)
 =\displaystyle\left(\frac{ 2}{\mathcal{H}_1}\right)\displaystyle\left(\frac{ 2}{\mathcal{H}_2}\right)
  =\displaystyle\left(\frac{2}{p_1}\right)\displaystyle\left(\frac{2}{p_1}\right)
  =1.$
  On the other hand, $2$ ramifies completely  in $\kk/\QQ$ and splits in $F_1/\QQ$; moreover  $\mathcal{H}_0$ is unramified in $\KK_3/\kk$,  then $\mathcal{H}_0$ splits in $\KK_3$ i.e. $\mathcal{H}_0\in N_{\KK_3/\kk}(\mathbf{C}l_2(\KK_3))$. Thus
$$N_{\KK_3/\kk}(\mathbf{C}l_2(\KK_3))=\langle[\mathcal{H}_0], [\mathcal{H}_1\mathcal{H}_2]\rangle.$$

 \indent To this end, it is easy to see that $\KK_3/F_1$ and $\KK_3/F_2$ are ramified extensions, whereas  $\KK_3/\kk$  is not; so from  the class field theory $ [\mathbf{C}l_2(\kk):N_{\KK_3/\kk}(\mathbf{C}l_2(\KK_3))]=2$, $\mathbf{C}l_2(F_2)=N_{\KK_3/F_2}(\mathbf{C}l_2(\KK_3))$ and  $\mathbf{C}l_2(F_1)=N_{\KK_3/F_1}(\mathbf{C}l_2(\KK_3))$, hence Theorem \ref{9} implies that $$N_{\KK_3/F_1}(\mathbf{C}l_2(\KK_3))=\langle[\mathfrak{2}_{F_1}], [I]\rangle.$$
 Then there exists an ideal $\mathfrak{P}\in\KK_3$ such that $N_{\KK_3/F_1}(\mathfrak{P})\sim I$ and $N_{\KK_3/\kk}(\mathfrak{P})\in \langle[\mathcal{H}_0], [\mathcal{H}_1\mathcal{H}_2]\rangle$. One shows that $N_{\KK_3/\kk}(\mathfrak{P})\sim \mathcal{H}_1\mathcal{H}_2$ (see Lemma \ref{15} below). We claim that
     $$\left\{
 \begin{array}{ll}
 \mathfrak{P}^{2}\sim I, \text{ if } q=1,\\
\mathfrak{P}^{2}\sim \mathcal{H}_1\mathcal{H}_2I, \text{ if }  q=2.
 \end{array}\right.$$
 Let $t$ and $s$ be the elements of order $2$ of $\mathrm{Gal}(\KK_3/\QQ(i))$ which  fix $F_1$ and $\kk$, respectively. Using the identity    $2+(1+t+s+ts)=(1+t)+(1+s)+(1+ts)$  of the group ring $\ZZ[\mathrm{Gal}(\KK_3/\QQ(i))]$ and observing that the class numbers of $\QQ(i)$,  $F_2$ are odd, we find that
$$\mathfrak{P}^2\sim \mathfrak{P}^{1+t}\mathfrak{P}^{1+s}\mathfrak{P}^{1+ts}\sim \mathcal{H}_1\mathcal{H}_2I.$$
 As $\mathcal{H}_1\mathcal{H}_2\sim 1$, in $\mathbf{C}l_2(\KK_3)$, if $q=1$ (see Lemma \ref{7}), so the result claimed. Moreover $\mathfrak{p}_1\mathcal{O}_\kk=\mathcal{H}_1\mathcal{H}_2$, so  Lemma \ref{7} yields that in $\mathbf{C}l_2(\KK_3)$ we have
    $$\left\{
 \begin{array}{ll}
 \mathfrak{P}^{2^{m}}\sim I^{2^{m-1}}\sim\mathfrak{p}_1\sim1, \text{ if } q=1,\\
\mathfrak{P}^{2^{m}}\sim I^{2^{m-1}}\sim\mathfrak{p}_1, \text{ if }  q=2.
 \end{array}\right.$$
 On the other hand,  $\mathfrak{2}_{F_1}$  ramifies and $\mathcal{H}_0$ splits in  $\KK_3$, let $\mathfrak{A}$ be  an ideal of  $\KK_3$ above $\mathfrak{2}_{F_1}$, then $N_{\KK_3/\kk}(\mathfrak{A})\sim \mathcal{H}_0$ and  $N_{\KK_3/F_1}(\mathfrak{A})\sim \mathfrak{2}_{F_1}$. Thus, in $\mathbf{C}l_2(\KK_3)$, we have $$\mathfrak{A}^2\sim\mathfrak{2}_{F_1} \text{ and  }\mathfrak{A}^{2^{n+1}}\sim\mathfrak{2}_{F_1}^{2^{n}}.$$
 Recall that  $\mathcal{H}_j$ and  $\mathcal{P}_j$ coincide and remain inert in $\KK_3$; moreover  $\mathfrak{p}_1\mathcal{O}_{\KK_3}=\mathcal{H}_1\mathcal{H}_2\mathcal{O}_{\KK_3}=\mathcal{P}_1\mathcal{P}_2\mathcal{O}_{\KK_3}$  and $\mathfrak{p}_2\mathcal{O}_{\KK_3}=\mathcal{H}_3\mathcal{H}_4\mathcal{O}_{\KK_3}=\mathcal{P}_3\mathcal{P}_4\mathcal{O}_{\KK_3}$,
  therefore:\\
   $\bullet$ If $\left(\frac{p_1}{p_2}\right)=-1$ or $\left(\frac{p_1}{p_2}\right)=1$ and $N(\varepsilon_{p_1p_2})=-1$, then Lemmas  \ref{6}, \ref{7} and Theorem  \ref{9}  imply that
   $$\left\{
 \begin{array}{ll}
 \mathfrak{A}^{2^{n+1}}\sim\mathfrak{2}_{F_1}^{2^{n}}\sim1, \text{ if } q=1,\\
 \mathfrak{A}^{2^{n+1}}\sim\mathfrak{2}_{F_1}^{2^{n}}\not\sim1 \text{ and }\mathfrak{A}^{2^{n+2}}\sim\mathfrak{2}_{F_1}^{2^{n+1}}\sim1, \text{ if }  q=2.
 \end{array}\right.$$
$\bullet$ If $\left(\frac{p_1}{p_2}\right)=1$ and $N(\varepsilon_{p_1p_2})=1$, then $q=1$, and Lemma \ref{16} yields that $\mathcal{P}_1\mathcal{P}_3\sim \mathfrak{2}_{F_1}^{2^{n}}$.\\
 \indent Let us prove that $\mathcal{P}_1\mathcal{P}_3\mathcal{O}_{\KK_3}=\mathcal{H}_1\mathcal{H}_3\mathcal{O}_{\KK_3}$ is principal. We know that $N(\varepsilon_{2p_1p_2})=-1$, so the decomposition uniqueness in $\ZZ[i]$ implies that there exist $y_1$, $y_2$ in $\ZZ[i]$ such that
\begin{center}
  $\sqrt{\varepsilon_d}=
  \frac{1}{2}[y_1(1+i)\sqrt{(1\pm i)\pi_1\pi_3}+y_2(1-i)\sqrt{(1\mp i)\pi_2\pi_4}]$  (a) or\\
 $\sqrt{\varepsilon_d}=\frac{1}{2}[y_1(1+i)\sqrt{(1\pm i)\pi_1\pi_4}+y_2(1-i)\sqrt{(1\mp i)\pi_2\pi_3}]$ (b).
 \end{center}
   Moreover the ideal $\mathcal{H}_0\mathcal{H}_1\mathcal{H}_3$ is principal in $\kk$, if and only if there exists a unit $\varepsilon\in\kk$ such that   \begin{equation}\label{55}(1+i)\pi_1\pi_3\varepsilon=\alpha^2,\end{equation} where $\alpha\in\kk$. As $N(\varepsilon_{2p_1p_2})=-1$, so  Lemma
 \ref{6} involves that  $Q_\kk=1$, the unit index of $\kk$; hence $\varepsilon$ is  either real or purely imaginary.\par
Put $\alpha=\alpha_1+i\alpha_2$,  with $\alpha_1$,
 $\alpha_2\in\QQ(\sqrt{2p_1p_2})$, and suppose  $\varepsilon$ is  real (same proof if it is purely imaginary); as $\pi_1\pi_3=(e+2if)(g+2ih)=(eg-4fh)+2i(eh+gf)$, so the equation (\ref{55}) is equivalent to
$$\alpha_1^2-\alpha_2^2+2i\alpha_1\alpha_2=\varepsilon[(eg-4fh)-2(eh+fg)]+i\varepsilon_d[(eg-4fh)+2(eh+gf)],$$
hence
$$\left\{\begin{array}{ll}
   \alpha_1^2-\alpha_2^2 &=\varepsilon[(eg-4fh)-2(eh+fg)], \\
   2\alpha_1\alpha_2 & =\varepsilon[(eg-4fh)+2(eh+gf)],
 \end{array}
   \right.$$  so we get
 $\alpha_2=\frac{\varepsilon[(eg-4fh)+2(eh+gf)]}{2\alpha_1},$ thus
 $$4\alpha_1^4-4\varepsilon[(eg-4fh)-2(eh+fg)]\alpha_1^2-[(eg-4fh)+2(eh+fg)]^2\varepsilon^2=0,$$
the discriminant of this equation is $\Delta'=4\varepsilon^2d$, $d=2p_1p_2$,
 which implies that $$\alpha_1^2=\frac{\varepsilon}{4}[2[(eg-4fh)-2(eh+fg)]\pm2\sqrt d].$$
 Since \begin{center} $(1+i)\pi_1\pi_3+(1-i)\pi_2\pi_4 =2(eg-4fh)-4(eh+fg)$ and \\
          $\sqrt d  =\sqrt{(1-i)\pi_1\pi_3}\sqrt{(1+i)\pi_2\pi_4},$ \end{center}
   then
   $$\begin{array}{ll}
   \alpha_1^2&=\frac{\varepsilon}{4}(\sqrt{(1-i)\pi_1\pi_3}+\sqrt{(1+i)\pi_2\pi_4})^2,\text{ so }\\
    \alpha_1&=\frac{\sqrt{\varepsilon}}{2}(\sqrt{(1-i)\pi_1\pi_3}+\sqrt{(1+i)\pi_2\pi_4}),
    \end{array}$$
   therefore  if $\varepsilon=\varepsilon_d$ and $\sqrt{\varepsilon_d}$ takes the value  (a), we get
 $$
   \alpha_1 =\frac{1}{4}(2y_1\pi_1\pi_3+2y_2\pi_2\pi_4+(y_1(1+i)+y_2(1-i))\sqrt d),$$ and
   $$\alpha_2 =\frac{\varepsilon_d[(eg-4fh)+2(eh+gf)]}{2\alpha_1},
     $$
    and it is easy to see that $\alpha_1$, $\alpha_2$
   $\in\QQ(\sqrt{2p_1p_2})$; hence $\mathcal{H}_0\mathcal{H}_1\mathcal{H}_3$ is principal in $\kk$.  Proceeding  similarly,  we prove that
  $\mathcal{H}_0\mathcal{H}_2\mathcal{H}_3$ is principal in $\kk$ if $\sqrt{\varepsilon_d}$ takes the value  (b). Hence, in $\mathbf{C}l_2(\kk)$, we have $\mathcal{H}_3\sim\mathcal{H}_0\mathcal{H}_1$ or $\mathcal{H}_3\sim\mathcal{H}_0\mathcal{H}_2$, this in turn shows that, in $\mathbf{C}l_2(\KK_3)$, $\mathcal{H}_1\mathcal{H}_3\sim\mathcal{H}_0\mathcal{H}_1\mathcal{H}_2$ or $\mathcal{H}_1\mathcal{H}_3\sim\mathcal{H}_0$, as we know that $\mathcal{H}_0$, $\mathcal{H}_1\mathcal{H}_2$ capitulate in $\KK_3$, so the result. Thus
  $$\mathfrak{A}^{2^{n+1}}\sim\mathfrak{2}_{F_1}^{2^{n}}\sim1.$$
Consequently, in $\mathbf{C}l_2(\KK_3)$, we have
   $$\left\{
 \begin{array}{ll}
 \mathfrak{A}^{2^{n+1}}\sim\mathfrak{P}^{2^{m}}\sim \mathcal{H}_1\mathcal{H}_2 \sim 1, \text{ if } q=1,\\
 \mathfrak{A}^{2^{n+1}}\sim\mathfrak{P}^{2^{m}}\sim \mathcal{H}_1\mathcal{H}_2 \not\sim 1 \text{ and }\mathfrak{A}^{2^{n+2}}\sim\mathfrak{P}^{2^{m+1}}\sim1, \text{ if }  q=2.
 \end{array}\right.$$
To this end, note that for all  $i\leq n$, $j\leq m-1$, we have $\mathfrak{A}^{2^{i}}\mathfrak{P}^{2^{j}}\not\sim1$, as otherwise, we would have, by applying the norm $N_{\KK_3/F_1}$, $\mathfrak{2}_{F_1}^{2^{i}}I^{2^{j}}\sim1$, which contradicts  the results of Theorem \ref{9}.\\
{ \em Conclusion}\\
\indent If $q=1$, then $\langle[\mathfrak{A}], [\mathfrak{P}]\rangle$ is a subgroup of  $\mathbf{C}l_2(\KK_3)$ of type $(2^m, 2^{n+1})$, and as in this case $h(\KK_3)=h(p_1p_2)h(-p_1p_2)=2^{n+m+1}$ (see Lemma \ref{6}), so $$\mathbf{C}l_2(\KK_3)=\langle[\mathfrak{A}], [\mathfrak{P}]\rangle\simeq (2^{n+1}, 2^m).$$
\indent If $q=2$, then $\langle[\mathfrak{A}], [\mathfrak{P}]\rangle$ is a subgroup of $\mathbf{C}l_2(\KK_3)$ of type\\ $(2^{\min(m, n+1)}, 2^{\max(n+2, m+1)})$, and as in this case $h(\KK_3)=2h(p_1p_2)h(-p_1p_2)=2^{n+m+2}$ (see Lemma \ref{6}), so
 $$\mathbf{C}l_2(\KK_3)=\langle[\mathfrak{A}], [\mathfrak{P}]\rangle\simeq(2^{\min(m, n+1)}, 2^{\max(n+2, m+1)}).$$
\indent As $\mathcal{H}_1$,  $\mathcal{P}_1$ remain inert in $\KK_3$, so  they do not capitulate and coincide in $\KK_3$, hence they are of order $2$. Thus from Theorem  \ref{9} and Lemma \ref{16} we deduce that:\\
$\bullet$\    If $\left(\frac{p_1}{p_2}\right)=-1$ or $\left(\frac{p_1}{p_2}\right)=1$ and $N(\varepsilon_{p_1p_2})=-1$, then  $$\mathfrak{A}^{2^{n}}\mathfrak{P}^{2^{m-1}}\sim \mathfrak{2}_{F_1}^{2^{n-1}}I^{2^{m-2}}\sim\mathcal{P}_1\sim \mathcal{H}_1.$$
$\bullet$ If $\left(\frac{p_1}{p_2}\right)=1$ and $N(\varepsilon_{p_1p_2})=1$, then   $\mathfrak{2}_{F_1}^{2^{n}}\sim1$, thus
$$\mathfrak{P}^{2^{m-1}}\sim I^{2^{m-2}}\sim\mathcal{P}_1\sim \mathcal{H}_1.$$
 Finally,  from Lemma  \ref{12},  we deduce the following remark
\begin{rema}\label{13}
$(1)$ Assume $q=1$, so
\begin{enumerate}[\rm\indent(i)]
  \item If $\left(\frac{p_1}{p_2}\right)=-1$ or $\left(\frac{p_1}{p_2}\right)=1$ and $\left(\frac{p_1}{p_2}\right)_4\left(\frac{p_2}{p_1}\right)_4=-1$,   then $n=1$ and $m\geq3$, thus $\mathbf{C}l_2(\KK_3)\simeq(4,2^{m})$.
  \item If $\left(\frac{p_1}{p_2}\right)=1$ and $\left(\frac{p_1}{p_2}\right)_4\left(\frac{p_2}{p_1}\right)_4=1$,   then $m=2$ and $n\geq2$, hence  $\mathbf{C}l_2(\KK_3)\simeq(4,2^{n+1})$.
\end{enumerate}
$(2)$ If $q=2$, then  $\mathbf{C}l_2(\KK_3)\simeq(4,2^{n+2})$.
\end{rema}
This completes the proof of the second assertion.

(3) For the proof of the third assertion see \cite{AZT-3}.\par
(4) {\bf Computation of $\mathrm{Gal}(\kk_2^{(2)}/\kk)$}.\par
 Put $L=\kk_2^{(2)}$, the Hilbert 2-class field of $\kk$. Let $\displaystyle\left(\frac{L/\KK_3}{P}\right)$ denote the Artin symbol for the normal extension  $L/\KK_3$; hence it is clear that $\sigma=\displaystyle\left(\frac{L/\KK_3}{\mathfrak{P}}\right)$ and $\tau=\displaystyle\left(\frac{L/\KK_3}{\mathcal{\mathfrak{A}}}\right)$ generate the abelian subgroup  $\mathrm{Gal}(L/\KK_3)$ of $G=\mathrm{Gal}(L/\kk)$. If we put also  $\rho=\displaystyle\left(\frac{L/\kk}{\mathcal{H}_1}\right)$, then $\rho$ restricts to the nontrivial automorphism of  $\KK_3/\kk$, since $\mathcal{H}_1$ is not norm from $\KK_3/\kk$; from which we deduce that  $$G=\mathrm{Gal}(L/\kk)=\langle\rho, \tau, \sigma\rangle.$$
 Note  that $|G|=2|\mathrm{Gal}(L/\KK_3)|=
 \left\{
 \begin{array}{ll}
 2^{n+m+2} \text{ if } q=1,\\
 2^{n+m+3} \text{ if }  q=2.
 \end{array}\right.$\\
 To continue, let us prove the following result.
\begin{lem}\label{15}
 In $\mathbf{C}l_2(\kk)$, we have $N_{\KK_3/\kk}(\mathfrak{P})\sim \mathcal{H}_1\mathcal{H}_2$.
\end{lem}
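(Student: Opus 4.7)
The claim is that the ideal $\mathfrak{P}\subset\KK_3$, defined by $N_{\KK_3/F_1}(\mathfrak{P})\sim I$, has norm to $\kk$ representing precisely $[\mathcal{H}_1\mathcal{H}_2]$ inside $H_3=\langle[\mathcal{H}_0],[\mathcal{H}_1\mathcal{H}_2]\rangle$. The essential ingredients are Theorem~\ref{9}'s identification $I^{2^{m-1}}\sim\mathfrak{p}_1$ in $\mathbf{C}l_2(\overline{k}_1)$ (with an extra $\mathfrak{2}$ factor when $\left(\frac{p_1}{p_2}\right)=-1$), together with $\mathfrak{p}_1\mathcal{O}_{F_1}=\mathcal{P}_1\mathcal{P}_2$ and $\mathcal{P}_j\mathcal{O}_{\KK_3}=\mathcal{H}_j\mathcal{O}_{\KK_3}$, which combine to yield $\mathfrak{p}_1\mathcal{O}_{\KK_3}\sim\mathcal{H}_1\mathcal{H}_2$ in $\mathbf{C}l_2(\KK_3)$.

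My strategy is to raise the defining relation to the $2^{m-1}$-th power, obtaining
\[\mathfrak{P}^{2^{m-1}(1+t)}=N_{\KK_3/F_1}(\mathfrak{P}^{2^{m-1}})\mathcal{O}_{\KK_3}\sim\mathcal{H}_1\mathcal{H}_2\cdot\mathfrak{A}^{2\epsilon}\]
in $\mathbf{C}l_2(\KK_3)$, where $\mathfrak{A}$ is the ramified prime of $\KK_3$ over $\mathfrak{2}_{F_1}$ with $N_{\KK_3/\kk}(\mathfrak{A})\sim\mathcal{H}_0$. Applying $N_{\KK_3/\kk}$ (the $(1+s)$-action) and using that $s$ fixes each $\mathcal{H}_j$, I would obtain an identity of the form $(N_{\KK_3/\kk}(\mathfrak{P}))^{2^m}\sim(\mathcal{H}_1\mathcal{H}_2)^2\cdot\mathcal{H}_0^{4\epsilon}\sim 1$ in $\mathbf{C}l_2(\kk)$; but since $\mathbf{C}l_2(\kk)\simeq(2,2,2)$ has exponent $2$, that identity is vacuous and does not directly pin down the class.

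The resolution is to work in $\mathbf{C}l_2(\KK_3)$ modulo the capitulation kernel $\kappa_{\KK_3}$ from Lemma~\ref{7}: when $q=2$, the class $[\mathcal{H}_1\mathcal{H}_2]$ does not capitulate, so the relation $\mathfrak{P}^{2^{m-1}(1+t)}\sim[\mathcal{H}_1\mathcal{H}_2]$ in $\mathbf{C}l_2(\KK_3)/\kappa_{\KK_3}$ yields a nontrivial constraint that, combined with the inertia of $\mathcal{H}_j$ in $\KK_3$, forces $N_{\KK_3/\kk}(\mathfrak{P})\sim\mathcal{H}_1\mathcal{H}_2$ modulo $[\mathcal{H}_0]$; this ambiguity is harmless because $[\mathcal{H}_0]$ capitulates in $\KK_3$, so one may adjust $\mathfrak{P}\mapsto\mathfrak{P}\mathfrak{A}$ without changing the class of $N_{\KK_3/F_1}(\mathfrak{P})$. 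The main obstacle I anticipate is the case $q=1$, where both $[\mathcal{H}_0]$ and $[\mathcal{H}_1\mathcal{H}_2]$ capitulate in $\KK_3$; the argument must then fall back on the Artin map for $\KK_1/\kk$, computing $\chi_1(N_{\KK_3/\kk}(\mathfrak{P}))=\left(\frac{p_1}{\mathfrak{P}}\right)$ via Lemma~\ref{14} applied to the unramified extension $\k/\KK_3$, and matching this with $\chi_1([\mathcal{H}_1\mathcal{H}_2])$ to distinguish it from $\chi_1([\mathcal{H}_0])=1$.
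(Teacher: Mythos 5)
Your proposal does not close the argument. The whole difficulty of the lemma is to decide which of the four classes $1$, $[\mathcal{H}_0]$, $[\mathcal{H}_1\mathcal{H}_2]$, $[\mathcal{H}_0\mathcal{H}_1\mathcal{H}_2]$ of the norm group $N_{\KK_3/\kk}(\mathbf{C}l_2(\KK_3))$ equals $N_{\KK_3/\kk}(\mathfrak{P})$, and the relation $\mathfrak{P}^{2^{m-1}(1+t)}\sim \mathcal{H}_1\mathcal{H}_2$ cannot do that: it constrains a \emph{power} of $[\mathfrak{P}]$ in $\mathbf{C}l_2(\KK_3)$, whereas $N_{\KK_3/\kk}(\mathfrak{P})=\mathfrak{P}^{1+s}$ involves the action of $s$ on $[\mathfrak{P}]$, which is precisely the unknown; the identity $2+(1+t+s+ts)=(1+t)+(1+s)+(1+ts)$ that links the two is used in the paper only \emph{after} Lemma~\ref{15} is established, so invoking it here is circular. (Note also that $\kappa_{\KK_3}$ is a subgroup of $\mathbf{C}l_2(\kk)$, not of $\mathbf{C}l_2(\KK_3)$, so ``working in $\mathbf{C}l_2(\KK_3)$ modulo $\kappa_{\KK_3}$'' is not meaningful as stated.) Two further claims are false: replacing $\mathfrak{P}$ by $\mathfrak{P}\mathfrak{A}$ changes $N_{\KK_3/F_1}(\mathfrak{P})$ by the class of $\mathfrak{2}_{F_1}$, which has order $2^{n}$ or $2^{n+1}$ in $\mathbf{C}l_2(F_1)$, so the adjustment is not harmless; and the ambiguity modulo $[\mathcal{H}_0]$ is not harmless either, since the lemma is later used to identify $\sigma=\left(\frac{L/\kk}{\mathcal{H}_1\mathcal{H}_2}\right)$ as an element of $G/G'\simeq\mathbf{C}l_2(\kk)$, where $[\mathcal{H}_0]$ and $1$ are distinct classes.

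Your final sentence does point toward the correct mechanism, but it stops exactly where the work begins. The paper's proof picks (by Chebotarev) a prime $\mathfrak{R}$ of $\KK_3$ in the class of $\mathfrak{P}$, lying over a rational prime $r$, and evaluates $\left(\frac{r}{p_1}\right)$ in two ways. Pushing $N_{\KK_3/F_1}(\mathfrak{R})\sim I$ down to $k_1=\QQ(\sqrt{p_1p_2})$ gives $N_{F_1/k_1}(I)\sim \mathfrak{2}_1^{2^i}$, hence a representation $2^{2^i}r=\pm(x^2-p_1p_2y^2)$ and therefore $\left(\frac{r}{p_1}\right)=1$. If $N_{\KK_3/\kk}(\mathfrak{P})$ were $\sim\mathcal{H}_0$ (or $\sim\mathcal{H}_0\mathcal{H}_1\mathcal{H}_2$), pushing down to $k_0=\QQ(\sqrt{2p_1p_2})$ would give $2r=\pm(x^2-2p_1p_2y^2)$, forcing $\left(\frac{r}{p_1}\right)=-1$ because $\left(\frac{2}{p_1}\right)=-1$ --- a contradiction; and $N_{\KK_3/\kk}(\mathfrak{P})\sim 1$ is excluded because $\sigma$ has order greater than $1$. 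This norm-form computation is the entire content of the lemma; announcing that one would ``compute $\left(\frac{p_1}{\mathfrak{P}}\right)$ and match it'' without performing such a computation merely restates the problem.
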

\begin{proof}  We choose a prime ideal  $\mathfrak{R}$ in  $\KK_3$  such that $[\mathfrak{R}]=[\mathfrak{P}]$, this is always possible by  Chebotarev's theorem, hence  $\mathscr{R}_\kk \sim N_{\KK_3/\kk}(\mathfrak{R})$ and $\mathscr{R}_{F_1}\sim N_{\KK_3/F_1}(\mathfrak{R})$ are prime ideals in $\kk$ and $F_1$ respectively, thus $\mathscr{R}_{F_1}\sim N_{\KK_3/F_1}(\mathfrak{R})\sim N_{\KK_3/F_1}(\mathfrak{P})\sim I$. As the extension $F_1/k_1$ is ramified and $\mathbf{C}l_2(k_1)$ is generated by $\mathfrak{2}_1$,  we infer that the prime ideal  $\mathscr{R}_{k_1}\sim N_{F_1/k_1}(\mathscr{R}_{F_1})\sim N_{F_1/k_1}(I)\sim \mathfrak{2}_1^{2^i}$ with some integer $i$. This implies that $2^{2^i}r=\pm(x^2-p_1p_2y^2)$, which in turn shows that  $(\frac{r}{p_1})=1$.\\
\indent We know that   $N_{\KK_3/\kk}(\mathbf{C}l_2(\KK_3))=\langle[\mathcal{H}_0], [\mathcal{H}_1\mathcal{H}_2]\rangle$. So if $N_{\KK_3/\kk}(\mathfrak{P})\sim \mathcal{H}_0$, then $\mathscr{R}_\kk\sim\mathcal{H}_0$ (equivalence in $\mathbf{C}l_2(\kk)$); hence the prime ideal  $\mathfrak{r}=N_{\kk/k_0}(\mathscr{R}_\kk)$ of $k_0$ is equivalent, in $\mathbf{C}l_2(k_0)$, to $\widetilde{\mathfrak{2}}\sim P_1P_2$.
   Therefore the equivalence   $\mathfrak{r}\sim \widetilde{\mathfrak{2}}$ yields that  $ 2r=\pm(x^2-2p_1p_2y^2)$, where $x$, $y$ are in $\ZZ$; which shows that $(\frac{2r}{p_1})=1$, this leads to the contradiction $(\frac{r}{p_1})=-1$, since $(\frac{2}{p_1})=-1$. We get the same contradiction if we suppose that  $N_{\KK_3/\kk}(\mathfrak{P})\sim \mathcal{H}_0\mathcal{H}_1\mathcal{H}_2$. Finally the equivalence $N_{\KK_3/\kk}(\mathfrak{P})\sim 1$ can not occur since the order of $\sigma$ is strictly greater than 1.
\end{proof}
 Therefore the following relations hold:\\
$\bullet$  $[\tau, \sigma]=1$.\\
$\bullet$ $\rho^2=\displaystyle\left(\frac{L/\kk}{\mathcal{H}_1^2}\right)=
\displaystyle\left(\frac{L/\kk}{N_{\KK_3/\kk}(\mathcal{H}_1)}\right)=\displaystyle\left(\frac{L/\KK_3}{\mathcal{H}_1}\right)$, so $\rho^4=1$ .\\
$\bullet$ $\tau\rho^{-1}\tau\rho=\displaystyle\left(\frac{L/\KK_3}{\mathfrak{A}^{1+\rho}}\right)=1$, since  $\mathfrak{A}^{1+\rho}=N_{\KK_3/\kk}(\mathfrak{A})\sim \mathcal{H}_0\sim 1$, thus $[\tau, \rho]=\tau^{-1}\rho^{-1}\tau\rho=\tau^{-2}$ and $[\rho, \tau]=\tau^{2}$.\\
$\bullet$ $\sigma\rho^{-1}\sigma\rho=\displaystyle\left(\frac{L/\KK_3}{\mathfrak{P}^{1+\rho}}\right)=
\left\{
 \begin{array}{ll}
 1, \text{ if } q=1,\\
 \sigma^{2^m}, \text{ if }  q=2,
 \end{array}\right.
$ since, in $\mathbf{C}l_2(\KK_3)$, we have\\
  $\mathfrak{P}^{1+\rho}=N_{\KK_3/\kk}(\mathfrak{P})\sim \mathcal{H}_1\mathcal{H}_2\sim
  \left\{
 \begin{array}{ll}
 1, \text{ if } q=1,\\
\mathfrak{P}^{2^m}, \text{ if }  q=2,
 \end{array}\right.$\\
 therefore $[\sigma, \rho]=
   \left\{
 \begin{array}{ll}
 \sigma^{-2}, \text{ if } q=1,\\
\sigma^{2^m-2}=\sigma^{2}, \text{ if }  q=2, \text{ since in this case } m=2.
 \end{array}\right.$\\
$\bullet$ Suppose $q=1$, so\\
 - If $\left(\frac{p_1}{p_2}\right)=-1$ or $\left(\frac{p_1}{p_2}\right)=1$ and $N(\varepsilon_{p_1p_2})=-1$, then $\rho^4=\sigma^{2^m}=\tau^{2^{n+1}}=1$ and $\rho^2=\sigma^{2^{m-1}}\tau^{2^{n}}$, since $\mathfrak{A}^{2^{n+1}}\sim\mathfrak{P}^{2^{m}}\sim1$ and $\mathcal{H}_1\sim\mathfrak{A}^{2^{n}}\mathfrak{P}^{2^{m-1}}$.\\
 - If $\left(\frac{p_1}{p_2}\right)=1$ and $N(\varepsilon_{p_1p_2})=1$, then $\rho^4=\sigma^{2^m}=\tau^{2^{n+1}}=1$ and $\rho^2=\sigma^{2^{m-1}}$ since $\mathcal{H}_1\sim\mathfrak{P}^{2^{m-1}}$.\\
$\bullet$ Suppose $q=2$, so necessarily $\left(\frac{p_1}{p_2}\right)=-1$ or $\left(\frac{p_1}{p_2}\right)=1$ and $N(\varepsilon_{p_1p_2})=-1$, then\\
 $\left\{
   \begin{array}{ll}
   \rho^4=\sigma^{2^{m+1}}=\tau^{2^{n+2}}=1,\\
   \sigma^{2^{m}}=\tau^{2^{n+1}} \text{ and } \rho^2=\sigma^{2^{m-1}}\tau^{2^{n}},
   \end{array}\right.$\\
    since $\mathfrak{A}^{2^{n+2}}\sim\mathfrak{P}^{2^{m+1}}\sim1$, $\mathfrak{A}^{2^{n+1}}\sim\mathfrak{P}^{2^{m}}$ and $\mathcal{H}_1\sim\mathfrak{A}^{2^{n}}\mathfrak{P}^{2^{m-1}}$.\\
\indent (5) As $[\tau,\sigma]=1$, $[\rho,\sigma]=\sigma^{2}$ or $\sigma^{-2}$ and $[\rho, \tau]=\tau^{2}$, then the derived group of $G$ is $G'=\langle\sigma^2, \tau^2\rangle$, therefore\\
 $\mathbf{C}l_2(\kk^{(1)}_2)\simeq\left\{
   \begin{array}{ll}
   (2^{m-1}, 2^{n})  \text{ if } q=1,\\
   (2^{\min(m, n+1)-1}, 2^{\max(m+1, n+2)-1})=(2, 2^{n+1})  \text{ if } q=2.
   \end{array}\right.$\\
\indent (6) Finally, we compute the coclass of $G$.\\
Let $G$ be the group defined above. Then the lower central series of $G$ is defined inductively by $\gamma_1(G)=G$ and $\gamma_{i+1}(G)=[\gamma_i(G),G]$, that is the subgroup of $G$ generated by the set $\{[a, b]=a^{-1}b^{-1}ab/ a\in \gamma_i(G), b\in G\}$, so  the coclass of $G$ is defined to be $cc(G) = h-c$, where $|G|=2^h$ and  $c=c(G)$ is the nilpotency class of $G$, that is  the smallest  integer satisfying  $\gamma_{c+1}(G)=1$. We easily get \\
$\gamma_1(G)=G$.\\
$\gamma_2(G)=G'=\langle\sigma^2, \tau^2\rangle$.\\
$\gamma_3(G)=[G',G]=\langle\sigma^4, \tau^4\rangle$.\\
 Then   Proposition \ref{24}(6) (see below) yields that $\gamma_{j+1}(G)=[\gamma_j(G),G]=\langle\sigma^{2^{j}}, \tau^{2^{j}}\rangle$.\\
 \indent Suppose   $q=1$, then if we put $\upsilon=\max(n, m-1)$, we get\\ $\gamma_{\upsilon+2}(G)=\langle\sigma^{2^{\upsilon+1}}, \tau^{2^{\upsilon+1}}\rangle=\langle1\rangle$ and $\gamma_{\upsilon+1}(G)=\langle\sigma^{2^{\upsilon}}, \tau^{2^{\upsilon}}\rangle\neq\langle1\rangle$. As, in this case, $\mid G\mid=2^{n+m+2}$, so $$c(G)=\upsilon+1\text{ and } cc(G) = n+m+1-\upsilon=3,$$
 in fact, from Lemma \ref{12}, we have $m\geq3$ and $n=1$ or $m=2$ and $n\geq2$, so  the first case implies that $\upsilon=m-1$ and $cc(G) = n+m+1-\upsilon = 3$, whereas  the second one yields that $\upsilon=n$ and $cc(G) = n+m+1-\upsilon= 3.$\\
  \indent Suppose   $q=2$, then if we put $\upsilon=\max(n+1, m)$, we get\\ $\gamma_{\upsilon+2}(G)=\langle\sigma^{2^{\upsilon+1}}, \tau^{2^{\upsilon+1}}\rangle=\langle1\rangle$ and $\gamma_{\upsilon+1}(G)=\langle\sigma^{2^{\upsilon}}, \tau^{2^{\upsilon}}\rangle\neq\langle1\rangle$. As, in this case, $\mid G\mid=2^{n+m+3}$, so $$c(G)=\upsilon+1\text{ and } cc(G) = n+m+2-\upsilon=3,$$
 since in this case, from Lemma \ref{12},  $m=2$ and $n\geq1$, thus  $\upsilon=n+1$.
 \subsection{Proof of Theorem \ref{3}}
 For this  we need the following results which are easy to check.
\begin{propo}\label{24}
Let $G=\langle\sigma, \tau, \rho\rangle$ denote the group defined above, then
\begin{enumerate}[\rm\indent(1)]
  \item $\rho^{-1}\sigma\rho=\left\{\begin{array}{ll}
                              \sigma^{-1} \text{ if } q=1,\\
                              \sigma^3 \text{ if } q=2.
                              \end{array}
                              \right.$
  \item $\rho^{-1}\tau\rho=\tau^{-1}$.
  \item $[\rho^2,\sigma]=[\rho^2,\tau]=1$.
  \item $(\tau\rho)^2=\rho^2$.
  \item $(\sigma\tau\rho)^2=(\sigma\rho)^2=\left\{\begin{array}{ll}
                              \rho^{2} \text{ if } q=1,\\
                              \rho^2\sigma^{4} \text{ if } q=2.
                              \end{array}
                              \right.$
  \item For all $r\in\NN$, $[\rho, \tau^{2^r}]=\tau^{2^{r+1}}$ and
                              $[\rho,\sigma^{2^r}]=
                              \left\{\begin{array}{ll}
                              \sigma^{2^{r+1}} \text{ if } q=1,\\
                              \sigma^{-2^{r+1}} \text{ if } q=2.
                              \end{array}
                              \right.$
\end{enumerate}
\end{propo}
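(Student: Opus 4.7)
The plan is to derive all six identities directly from the presentation of $G$ given in Theorem \ref{2}(4), together with the fact that $\rho^2$ is expressed there as a word in $\sigma$ and $\tau$. Throughout I shall exploit the relations $[\tau,\sigma]=1$, $[\rho,\tau]=\tau^2$, and that $[\rho,\sigma]$ equals $\sigma^2$ when $q=1$ and $\sigma^{-2}$ when $q=2$.

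First I would dispose of (1)--(3). Parts (1) and (2) are pure manipulations of the defining commutators: from $\rho^{-1}\sigma^{-1}\rho\sigma=\sigma^{\pm 2}$ one reads off $\rho^{-1}\sigma\rho=\sigma^{-1}$ when $q=1$ and $\rho^{-1}\sigma\rho=\sigma^{3}$ when $q=2$; analogously, $[\rho,\tau]=\tau^2$ yields $\rho^{-1}\tau\rho=\tau^{-1}$. For (3), since $\rho^2\in\langle\sigma,\tau\rangle$ by the presentation and that subgroup is abelian, $\rho^2$ commutes with both $\sigma$ and $\tau$.

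Next I would tackle (4) and (5) by expanding $(\tau\rho)^2=\tau(\rho\tau\rho^{-1})\rho^2$ and using (2) to get $\rho\tau\rho^{-1}=\tau^{-1}$, which collapses the product to $\rho^2$. The same method applied to $(\sigma\rho)^2=\sigma(\rho\sigma\rho^{-1})\rho^2$ works for $q=1$ via (1), again yielding $\rho^2$. For $q=2$ I need the forward conjugate $\rho\sigma\rho^{-1}$: here Lemma \ref{12} forces $m=2$, so $\sigma$ has order $2^{m+1}=8$, and inverting $\rho^{-1}\sigma\rho=\sigma^{3}$ modulo $8$ (using $3\cdot 3\equiv 1\pmod 8$) gives $\rho\sigma\rho^{-1}=\sigma^{3}$, whence $(\sigma\rho)^2=\sigma^{4}\rho^2$. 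The equality $(\sigma\tau\rho)^2=(\sigma\rho)^2$ then follows from $[\sigma,\tau]=1$ together with $\rho\tau\rho^{-1}=\tau^{-1}$, since the $\tau$-factors cancel upon expansion. Finally, (6) will be a short computation using (1) and (2): $[\rho,\tau^{2^r}]=(\rho^{-1}\tau^{-1}\rho)^{2^r}\tau^{2^r}=\tau^{2^{r+1}}$, and similarly $[\rho,\sigma^{2^r}]=\sigma^{\pm 2^{r+1}}$, with the sign dictated by~(1).

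The only delicate step will be the $q=2$ case of~(5), where I must invert the conjugation relation on $\sigma$; this is the sole place where Lemma \ref{12} is essential, since it is precisely the order $2^{m+1}=8$ of $\sigma$ that makes $3$ its own inverse modulo the exponent of $\langle\sigma\rangle$. Everything else reduces to a direct and mechanical manipulation of the presentation.
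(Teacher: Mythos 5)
Your verification is correct: all six identities follow, as you show, by direct manipulation of the presentation in Theorem~\ref{2}(4) (reading $\rho^{-1}\sigma\rho$ and $\rho^{-1}\tau\rho$ off the commutator relations, using $\rho^2\in\langle\sigma,\tau\rangle$ for centrality, and invoking $m=2$ from Lemma~\ref{12} in the $q=2$ case so that $3$ is its own inverse modulo the order $2^{m+1}=8$ of $\sigma$). The paper omits the proof entirely, declaring the proposition ``easy to check,'' and your computation is precisely the routine verification that is being left to the reader.
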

 The proof of  Theorem \ref{3} consists of 3 parts. In the first part, we will compute $N_{\KK_j/\kk}(\mathbf{C}l_2(\KK_j))$, for all $1\leq j\leq7$.  In the second one,  we will determine the capitulation kernels $\kappa_{\KK_j}$ and the types of $\mathbf{C}l_2(\KK_4)$ and in the third one, we will determine the capitulation kernels $\kappa_{\LL_j}$ and the types of $\mathbf{C}l_2(\LL_4)$. It should be noted that if $(\frac{p_1}{p_2})=-1$, then  Propositions \ref{5}, \ref{11} imply that
 $$\left\{\begin{array}{ll}
   q=1 \Leftrightarrow \left(\frac{\pi_1}{\pi_3}\right)=\left(\frac{1+i}{\pi_1}\right)\left(\frac{1+i}{\pi_3}\right) \\
   q=2  \Leftrightarrow \left(\frac{\pi_1}{\pi_3}\right)=-\left(\frac{1+i}{\pi_1}\right)\left(\frac{1+i}{\pi_3}\right)
   \end{array}
 \right.$$
 \subsubsection{ \bf Norm class groups} Let us  compute $N_j=N_{\KK_j/\kk}(\mathbf{C}l_2(\KK_j))$, the results are summarized in the following table.  Note that the left hand sides refer to the case  $\left(\frac{\pi_1}{\pi_3}\right)=-1$, while the right ones refer to the case  $\left(\frac{\pi_1}{\pi_3}\right)=1$. Put $B=\left(\frac{1+i}{\pi_1}\right)\left(\frac{1+i}{\pi_3}\right)$.
\footnotesize
 \begin{longtable}{| c | c | c |}
 \caption{Norm class groups}\label{21}\\
\hline
 $\KK_j$  & $N_j$ for  $(\frac{p_1}{p_2})=1$ & $N_j$ for  $(\frac{p_1}{p_2})=-1$\\
\hline
\endfirsthead
\hline
 $\KK_j$& $N_j$ for  $(\frac{p_1}{p_2})=1$ & $N_j$ for  $(\frac{p_1}{p_2})=-1$\\
\hline
\endhead
 $\KK_1$ & $\langle[\mathcal{H}_0\mathcal{H}_1], [\mathcal{H}_0\mathcal{H}_2]\rangle$ &  $\langle[\mathcal{H}_1], [\mathcal{H}_2]\rangle$\\ \hline
 $\KK_2$ &  $\langle[\mathcal{H}_1], [\mathcal{H}_2]\rangle$ & $\langle[\mathcal{H}_0\mathcal{H}_1], [\mathcal{H}_0\mathcal{H}_2]\rangle$\\ \hline
$\KK_3$ &  $\langle[\mathcal{H}_0], [\mathcal{H}_1\mathcal{H}_2]\rangle$ &  $\langle[\mathcal{H}_0], [\mathcal{H}_1\mathcal{H}_2]\rangle$\\ \hline
  &  $\langle[\mathcal{H}_0], [\mathcal{H}_2]\rangle$, $\langle[\mathcal{H}_0], [\mathcal{H}_1]\rangle$ if $B=1$ &  $\langle[\mathcal{H}_1], [\mathcal{H}_0\mathcal{H}_2]\rangle$ $\langle[\mathcal{H}_0], [\mathcal{H}_2]\rangle$ if $q=1$\\[-1ex]
 \raisebox{2ex}{$\KK_4$}
         &  $\langle[\mathcal{H}_2], [\mathcal{H}_0\mathcal{H}_1]\rangle$, $\langle[\mathcal{H}_1], [\mathcal{H}_0\mathcal{H}_2]\rangle$ if  $B=-1$ &  $\langle[\mathcal{H}_0], [\mathcal{H}_1]\rangle$ $\langle[\mathcal{H}_2], [\mathcal{H}_0\mathcal{H}_1]\rangle$ if $q=2$\\[1ex]\hline
  &  $\langle[\mathcal{H}_2], [\mathcal{H}_0\mathcal{H}_1]\rangle$, $\langle[\mathcal{H}_1], [\mathcal{H}_0\mathcal{H}_2]\rangle$ if $B=1$ &  $\langle[\mathcal{H}_0], [\mathcal{H}_2]\rangle$ $\langle[\mathcal{H}_1], [\mathcal{H}_0\mathcal{H}_2]\rangle$ if $q=1$\\[-1ex]
\raisebox{1.7ex}{$\KK_5$}
  &  $\langle[\mathcal{H}_0], [\mathcal{H}_2]\rangle$, $\langle[\mathcal{H}_0], [\mathcal{H}_1]\rangle$ if $B=-1$ &  $\langle[\mathcal{H}_2], [\mathcal{H}_0\mathcal{H}_1]\rangle$ $\langle[\mathcal{H}_0], [\mathcal{H}_1]\rangle$ if $q=2$\\[1ex]\hline
  &  $\langle[\mathcal{H}_1], [\mathcal{H}_0\mathcal{H}_2]\rangle$, $\langle[\mathcal{H}_2], [\mathcal{H}_0\mathcal{H}_1]\rangle$ if $B=1$ &  $\langle[\mathcal{H}_0], [\mathcal{H}_1]\rangle$ $\langle[\mathcal{H}_2], [\mathcal{H}_0\mathcal{H}_1]\rangle$ if $q=1$\\[-1ex]
\raisebox{2ex}{$\KK_6$}
         &  $\langle[\mathcal{H}_0], [\mathcal{H}_1]\rangle$, $\langle[\mathcal{H}_0], [\mathcal{H}_2]\rangle$ if $B=-1$ &  $\langle[\mathcal{H}_1], [\mathcal{H}_0\mathcal{H}_2]\rangle$ $\langle[\mathcal{H}_0], [\mathcal{H}_2]\rangle$ if $q=2$\\[1ex]\hline
  &  $\langle[\mathcal{H}_0], [\mathcal{H}_1]\rangle$, $\langle[\mathcal{H}_0], [\mathcal{H}_2]\rangle$ if $B=1$ &  $\langle[\mathcal{H}_2], [\mathcal{H}_0\mathcal{H}_1]\rangle$ $\langle[\mathcal{H}_0], [\mathcal{H}_1]\rangle$ if $q=1$\\[-1ex]
\raisebox{2ex}{$\KK_7$}
         &  $\langle[\mathcal{H}_1], [\mathcal{H}_0\mathcal{H}_2]\rangle$, $\langle[\mathcal{H}_2], [\mathcal{H}_0\mathcal{H}_1]\rangle$  if $B=-1$ &  $\langle[\mathcal{H}_0], [\mathcal{H}_2]\rangle$ $\langle[\mathcal{H}_1], [\mathcal{H}_0\mathcal{H}_2]\rangle$ if $q=2$\\[1ex]\hline
\end{longtable}
\normalsize
To check the  table entries we use  Lemmas \ref{4}, \ref{14}, Propositions \ref{5}, \ref{11} and the following results which are easy to prove.
\begin{lem}
Let $p_1\equiv p_2\equiv1\pmod4$ be primes. Put $p_1=\pi_1\pi_2$ and  $p_2=\pi_3\pi_4$, where $\pi_j\in\ZZ[i]$,  then
\begin{enumerate}[\rm\indent(i)]
  \item $\left(\frac{\pi_1}{\pi_2}\right)=\left(\frac{\pi_3}{\pi_4}\right)=
  \left\{\begin{array}{ll}
   1 \text{ if } p_1\equiv p_2\equiv1\pmod8,\\
   -1  \text{ if } p_1\equiv p_2\equiv5\pmod8
   \end{array}
 \right.$
  \item If  $\left(\frac{p_1}{p_2}\right)=1$, then $\left(\frac{\pi_1}{\pi_3}\right)=\left(\frac{\pi_2}{\pi_3}\right)=
\left(\frac{\pi_1}{\pi_4}\right)=\left(\frac{\pi_2}{\pi_4}\right)$.
  \item If  $\left(\frac{p_1}{p_2}\right)=-1$, then $\left(\frac{\pi_1}{\pi_3}\right)=\left(\frac{\pi_2}{\pi_4}\right)=-\left(\frac{\pi_2}{\pi_3}\right)=-\left(\frac{\pi_1}{\pi_4}\right)$.
  \item If  $\left(\frac{2}{p_1}\right)=1$, then $\left(\frac{1+i}{\pi_1}\right)=\left(\frac{1+i}{\pi_2}\right)$.
  \item If  $\left(\frac{2}{p_1}\right)=-1$, then $\left(\frac{1+i}{\pi_1}\right)=-\left(\frac{1+i}{\pi_2}\right)$.
\end{enumerate}
\end{lem}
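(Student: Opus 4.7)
The plan is to reduce each assertion to a short computation in $\ZZ[i]$ using three standard tools: the multiplicativity of the quadratic residue symbol in its upper entry, its invariance under complex conjugation of both entries (since the symbol takes values in $\{\pm 1\}$), and the identity $\left(\frac{a}{\pi}\right) = \left(\frac{a}{p}\right)$ for $a\in\ZZ$ coprime to $p$, with $\pi\mid p$ in $\ZZ[i]$, which follows from the isomorphism $\ZZ[i]/\pi\cong\mathbb{F}_p$.

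For (i), write $\pi_1 = a + bi$ and $\pi_2 = a - bi$ with $a$ odd and $b$ even (the standard shape for $p_1 = \pi_1\pi_2 \equiv 1\pmod 4$). Then $\pi_1 \equiv 2a \pmod{\pi_2}$, so
\[\left(\frac{\pi_1}{\pi_2}\right) = \left(\frac{2a}{\pi_2}\right) = \left(\frac{2}{p_1}\right)\left(\frac{a}{p_1}\right).\]
Since $\gcd(a,b)=1$ and $p_1 = a^2+b^2 \equiv b^2\pmod{|a|}$, quadratic reciprocity together with $p_1\equiv 1\pmod 4$ yield $\left(\frac{a}{p_1}\right) = \left(\frac{p_1}{|a|}\right) = \left(\frac{b^2}{|a|}\right) = 1$. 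Hence $\left(\frac{\pi_1}{\pi_2}\right) = \left(\frac{2}{p_1}\right)$, producing the required dichotomy on $p_1\pmod 8$. The same argument handles the pair $\pi_3,\pi_4$.

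For (ii) and (iii), multiplicativity yields
\[\left(\frac{\pi_1}{\pi_3}\right)\left(\frac{\pi_2}{\pi_3}\right) = \left(\frac{\pi_1\pi_2}{\pi_3}\right) = \left(\frac{p_1}{\pi_3}\right) = \left(\frac{p_1}{p_2}\right),\]
while invariance under complex conjugation gives $\left(\frac{\pi_1}{\pi_3}\right) = \left(\frac{\pi_2}{\pi_4}\right)$ and $\left(\frac{\pi_2}{\pi_3}\right) = \left(\frac{\pi_1}{\pi_4}\right)$. Splitting on the sign of $\left(\frac{p_1}{p_2}\right)$ then delivers (ii) when it equals $+1$ and (iii) when it equals $-1$.

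For (iv) and (v), the identities $1+i = i(1-i)$ and $\overline{1+i} = 1-i$ combined with conjugation invariance yield
\[\left(\frac{1+i}{\pi_2}\right) = \left(\frac{i}{\pi_2}\right)\left(\frac{1-i}{\pi_2}\right) = \left(\frac{i}{\pi_2}\right)\left(\frac{1+i}{\pi_1}\right).\]
A direct computation shows $\left(\frac{i}{\pi_2}\right) \equiv i^{(p_1-1)/2} = (-1)^{(p_1-1)/4}\pmod{\pi_2}$, and together with the classical supplement $\left(\frac{2}{p_1}\right) = (-1)^{(p_1-1)/4}$ (valid for $p_1\equiv 1\pmod 4$) one obtains $\left(\frac{i}{\pi_2}\right) = \left(\frac{2}{p_1}\right)$. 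Therefore $\left(\frac{1+i}{\pi_1}\right)$ and $\left(\frac{1+i}{\pi_2}\right)$ coincide exactly when $\left(\frac{2}{p_1}\right) = 1$, and differ in sign otherwise. The only subtlety throughout is keeping the Gaussian-prime representatives $\pi_j$ fixed up to $\ZZ[i]^{\times}$ compatibly with complex conjugation; this is automatic once one adopts the normalization $\pi_j = a \pm bi$ fixed at the beginning of $\S 2$, so no real obstacle is expected.
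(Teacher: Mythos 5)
Your proof is correct. The paper itself offers no argument for this lemma (it is introduced as a result ``easy to prove''), so there is nothing to compare against; your verification supplies exactly the standard reasoning one would expect. Each step checks out: in (i), $\pi_1\equiv 2a\pmod{\pi_2}$ reduces the symbol to $\left(\frac{2}{p_1}\right)\left(\frac{a}{p_1}\right)$, and the Jacobi-reciprocity computation $\left(\frac{a}{p_1}\right)=\left(\frac{p_1}{|a|}\right)=\left(\frac{b^2}{|a|}\right)=1$ is valid because $p_1\equiv1\pmod4$ and $\gcd(a,b)=1$; in (ii)--(iii) the identity $\left(\frac{\pi_1}{\pi_3}\right)\left(\frac{\pi_2}{\pi_3}\right)=\left(\frac{p_1}{p_2}\right)$ together with conjugation invariance of the $\{\pm1\}$-valued symbol gives both cases; and in (iv)--(v) the factorization $1+i=i(1-i)$ plus $\left(\frac{i}{\pi_2}\right)=(-1)^{(p_1-1)/4}=\left(\frac{2}{p_1}\right)$ is exactly right.
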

Compute $N_j$ in a few cases. Keeping in mind that  $\mathcal{H}_0$, $\mathcal{H}_1$ and  $\mathcal{H}_2$ are unramified in $\KK_j/\kk$.\\
 Take first $\KK_1=\kk(\sqrt{p_1})=\kk(\sqrt{2p_2})= \QQ(\sqrt p_1, \sqrt{2p_2}, i)$. As  $N_1=
  \{[\mathcal{H}]\in\mathbf{C}l_2(\kk)/\displaystyle\left(\frac{\alpha}{\mathcal{H}}\right)=1\}$, so for $j\in\{1, 2\}$ we get
 \begin{align*}
 \left(\frac{\kk(\sqrt{ 2p_2})/\kk}{\mathcal{H}_j}\right)&=\left(\frac{\kk(\sqrt{ 2p_2})/\kk}{\mathcal{H}_j}\right)(\sqrt{ 2p_2})(\sqrt{ 2p_2})^{-1}\\
 &=\left(\frac{2p_2}{\mathcal{H}_j}\right)\\
  &=\left(\frac{2p_2}{p_1}\right)\\
  &=\left(\frac{2}{p_1}\right)\left(\frac{p_1}{p_2}\right).
\end{align*}

Thus

  $\bullet$ If $\left(\frac{p_1}{p_2}\right)=-1$, then $[\mathcal{H}_j]\in N_1$, hence
  $N_1=\langle[\mathcal{H}_1],  \mathcal{H}_2]\rangle.$

 $\bullet$ If $\left(\frac{p_1}{p_2}\right)=1$, then $[\mathcal{H}_j]\not\in N_1$, hence
  $[\mathcal{H}_1\mathcal{H}_2]\in N_1$.  Moreover, since $\left(\frac{2}{p_1}\right)=-1$, then $\mathcal{H}_0\not\in N_1$; from which we deduce that  $[\mathcal{H}_0\mathcal{H}_1]$ and $[\mathcal{H}_0\mathcal{H}_2]$ are in  $N_1$, therefore
  $$N_1=\langle[\mathcal{H}_0\mathcal{H}_1],  [\mathcal{H}_0\mathcal{H}_2]\rangle.$$
\indent  Take an other example,   $\KK_4=\kk(\sqrt{\pi_1\pi_3})=\kk(\sqrt{2\pi_2\pi_4})$. First prove that   $\left(\frac{\pi_1\pi_3}{\mathcal{H}_0}\right)=\left(\frac{1+i}{\pi_1}\right)\left(\frac{1+i}{\pi_3}\right)$. As  $1+i$ is unramified in both of $\QQ(\sqrt{\pi_1\pi_3})/\QQ(i)$ and  $\kk/\QQ(i)$, so   according to \cite[Proposition 4.2, p.112]{Lm00} and Hilbert symbol properties we get
 \begin{align*}
 \left(\frac{\pi_1\pi_3}{\mathcal{H}_0}\right)=\left(\frac{ \pi_1\pi_3}{1+i}\right)
 &=\left(\frac{ \pi_1\pi_3}{1+i}\right)^{v_{1+i}(1+i)}\\
  &=\left(\frac{1+i, \pi_1\pi_3}{1+i}\right)\\
  &=\left(\frac{1+i, \pi_1}{1+i}\right)\left(\frac{1+i, \pi_3}{1+i}\right).
\end{align*}
On the other hand, the product formula implies, for $j\in\{1, 2\}$, that $$\left(\frac{1+i, \pi_j}{1+i}\right)\left(\frac{1+i, \pi_j}{\pi_j}\right)\prod_{\mathcal{P}\neq \pi_j,
                        \mathcal{P}\neq 1+i}\left(\frac{1+i, \pi_j}{\mathcal{P}}\right)=1;$$
 as $\mathcal{P}$ does not divide $\pi_j$ and $1+i$, so  $\left(\frac{1+i,\ \pi_j}{\mathcal{P}}\right)=1$, which yields that

 $$\left(\frac{1+i, \pi_j}{1+i}\right)\left(\frac{1+i, \pi_j}{\pi_j}\right)=1,\text{ hence }
 \left(\frac{1+i, \pi_j}{1+i}\right)=\left(\frac{1+i, \pi_j}{\pi_j}\right)=\left(\frac{1+i}{\pi_j}\right).$$
 This implies the result.

 Compute now   $N_4$.\\ We have
 $\left\{
 \begin{array}{ll}
 \left(\frac{\pi_1\pi_3}{\mathcal{H}_2}\right)=\left(\frac{\pi_1\pi_3}{\pi_2}\right)=\left(\frac{\pi_1}{\pi_2}\right)\left(\frac{\pi_3}{\pi_2}\right)=-\left(\frac{\pi_2}{\pi_3}\right),\\
 \left(\frac{2\pi_2\pi_4}{\mathcal{H}_1}\right)=\left(\frac{2\pi_2\pi_4}{\pi_1}\right)=\left(\frac{2}{p_1}\right)\left(\frac{\pi_1}{\pi_2}\right)\left(\frac{\pi_4}{\pi_1}\right)=\left(\frac{\pi_1}{\pi_4}\right),\\
 \left(\frac{\pi_1\pi_3}{\mathcal{H}_0}\right)=\left(\frac{1+i}{\pi_1}\right)\left(\frac{1+i}{\pi_3}\right).
 \end{array}\right.$\\
 Assume that  $\left(\frac{p_1}{p_2}\right)=1$. So \\
$\bullet$ If  $\left(\frac{p_1}{p_2}\right)_4\left(\frac{p_2}{p_1}\right)_4=\left(\frac{\pi_1}{\pi_3}\right)=-1$, then $\mathcal{H}_2\in N_4$ and $\mathcal{H}_1\not\in N_4$;\\
thus
  $\left\{
 \begin{array}{ll}
\text{ If } \left(\frac{1+i}{\pi_1}\right)\left(\frac{1+i}{\pi_3}\right)=1, \text{ then } N_4=\langle[\mathcal{H}_0], [\mathcal{H}_2]\rangle,\\
\text{ If } \left(\frac{1+i}{\pi_1}\right)\left(\frac{1+i}{\pi_3}\right)=-1, \text{ then } N_4=\langle[\mathcal{H}_2], [\mathcal{H}_0\mathcal{H}_1]\rangle.
 \end{array}\right.$\\
 $\bullet$ If  $\left(\frac{p_1}{p_2}\right)_4\left(\frac{p_2}{p_1}\right)_4=\left(\frac{\pi_1}{\pi_3}\right)=1$, then $\mathcal{H}_2\not\in N_4$ and $\mathcal{H}_1\in N_4$;\\
 hence
  $\left\{
 \begin{array}{ll}
\text{ If  } \left(\frac{1+i}{\pi_1}\right)\left(\frac{1+i}{\pi_3}\right)=1, \text{ then } N_4=\langle[\mathcal{H}_0], [\mathcal{H}_1]\rangle,\\
\text{ If } \left(\frac{1+i}{\pi_1}\right)\left(\frac{1+i}{\pi_3}\right)=-1, \text{ then } N_4=\langle[\mathcal{H}_1], [\mathcal{H}_0\mathcal{H}_2]\rangle.
 \end{array}\right.$\\
Assume that  $\left(\frac{p_1}{p_2}\right)=-1$. So\\
$\bullet$ If  $q=1$, then $\left(\frac{\pi_1}{\pi_3}\right)=\left(\frac{1+i}{\pi_1}\right)\left(\frac{1+i}{\pi_3}\right)$, hence\\
  $\left\{
 \begin{array}{ll}
\text{ If } \left(\frac{\pi_1}{\pi_3}\right)=1, \text{ then } N_4=\langle[\mathcal{H}_0], [\mathcal{H}_2]\rangle,\\
\text{ If } \left(\frac{\pi_1}{\pi_3}\right)=-1, \text{ then } N_4=\langle[\mathcal{H}_1], [\mathcal{H}_0\mathcal{H}_2]\rangle.
 \end{array}\right.$\\
$\bullet$ If  $q=2$, then $\left(\frac{\pi_1}{\pi_3}\right)=-\left(\frac{1+i}{\pi_1}\right)\left(\frac{1+i}{\pi_3}\right)$, hence\\
  $\left\{
 \begin{array}{ll}
\text{ If } \left(\frac{\pi_1}{\pi_3}\right)=1, \text{ then } N_4=\langle[\mathcal{H}_2], [\mathcal{H}_0\mathcal{H}_1]\rangle,\\
\text{ If } \left(\frac{\pi_1}{\pi_3}\right)=-1, \text{ then } N_4=\langle[\mathcal{H}_0], [\mathcal{H}_1]\rangle.
 \end{array}\right.$\\
Proceeding similarly,  we check the other table inputs.
\subsubsection{\bf Capitulation kernels $\kappa_{\KK_j}$ and $\mathrm{Gal}(\L/\KK_j)$\label{17}}
  Let us compute the  Galois groups $G_j=\mathrm{Gal}(\L/\KK_j)$,  the capitulation kernels $\kappa_{\KK_j}$,  $\kappa_{\KK_j}\cap N_j $ and the types of $\mathbf{C}l_2(\KK_j)$.  The results are summarized in the following tables.  Note that the left hand sides refer to the case  $\left(\frac{\pi_1}{\pi_3}\right)=-1$, while the right ones refer to the case  $\left(\frac{\pi_1}{\pi_3}\right)=1$. Put $B=\left(\frac{1+i}{\pi_1}\right)\left(\frac{1+i}{\pi_3}\right)$, $a=\min(m, n+1)$ and $b=\max(m+1, n+2)$.
\scriptsize
 \begin{longtable}{|c c | c | c | c | c |}
\caption{\small{ $\kappa_{\KK_j}$ for the case $\left(\frac{p_1}{p_2}\right)=1$}.\label{22}}\\
\hline
 $\KK_j$  && $G_j$ &  $\kappa_{\KK_j}$ & $\kappa_{\KK_j}\cap N_j $ & $\mathbf{C}l_2(\KK_j)$\\
\hline
\endfirsthead
\hline
 $\KK_j$ & &$G_j$ &  $\kappa_{\KK_j}$ & $\kappa_{\KK_j}\cap N_j$ & $\mathbf{C}l_2(\KK_j)$ \\
\hline
\endhead
 $\KK_1$ && $\langle\sigma, \tau\rho, \tau^2\rangle$  & $\langle[\mathcal{H}_1], [\mathcal{H}_2]\rangle$ & $\langle[\mathcal{H}_1\mathcal{H}_2]\rangle$ & $(2, 2, 2)$\\ \hline
  $\KK_2$ & & $\langle\sigma, \rho, \tau^2\rangle$  &   $\langle[\mathcal{H}_0\mathcal{H}_1], [\mathcal{H}_0\mathcal{H}_2]\rangle$ & $\langle[\mathcal{H}_1\mathcal{H}_2]\rangle$  & $(2, 2, 2)$\\ \hline
 & $q=1$ & &  $\langle[\mathcal{H}_0], [\mathcal{H}_1\mathcal{H}_2]\rangle$ & $N_3$  & $(2^{m}, 2^{n+1})$ \\[-1ex]
  \raisebox{2ex}{$\KK_3$} &$q=2$ & \raisebox{2ex}{$\langle\tau, \sigma\rangle$}
  &  $\langle[\mathcal{H}_0]\rangle$ &$\langle[\mathcal{H}_0]\rangle$  & $(2^{a}, 2^{b})$ \\ [1ex]\hline
& $B=1$ &  $\langle\tau, \rho\sigma, \sigma^2\rangle$ $\langle\tau, \rho\rangle$  &  $\langle[\mathcal{H}_0], [\mathcal{H}_1]\rangle$ &\hspace{-0.5cm}$\langle[\mathcal{H}_0]\rangle$  &\\[-1ex]
 \raisebox{2ex}{$\KK_4$}
  &  $B=-1$   &  $\langle\sigma\tau, \tau\rho, \sigma^2\rangle$ $\langle\sigma\tau, \rho\rangle$ &  $\langle[\mathcal{H}_1], [\mathcal{H}_0\mathcal{H}_2]\rangle$& $\langle[\mathcal{H}_0\mathcal{H}_1\mathcal{H}_2]\rangle$\  \raisebox{2ex}{$N_4$}& \raisebox{2ex}{ $(2, 2, 2)$ $(2, 4)$}\\[1ex]\hline
\raisebox{-2ex}{$\KK_5$} & $B=1$ &  $\langle\sigma\tau, \tau\rho, \sigma^2\rangle$ $\langle\sigma\tau, \rho\rangle$  & $\langle[\mathcal{H}_1], [\mathcal{H}_0\mathcal{H}_2]\rangle$ & $\langle[\mathcal{H}_0\mathcal{H}_1\mathcal{H}_2]\rangle$ \raisebox{-2ex}{$N_5$} & \raisebox{-2ex}{ $(2, 2, 2)$ $(2, 4)$}\\[-1ex]
 & $B=-1$  & $\langle\tau, \rho\sigma, \sigma^2\rangle$ $\langle\tau, \rho\rangle$  & $\langle[\mathcal{H}_0], [\mathcal{H}_1]\rangle$ & \hspace{-0.5cm}$\langle[\mathcal{H}_0]\rangle$  &  \\[1ex]\hline
 \raisebox{-2ex}{$\KK_6$}&$B=1$ &  $\langle\sigma\tau, \rho, \sigma^2\rangle$ $\langle\sigma\tau, \tau\rho\rangle$  & $\langle[\mathcal{H}_2], [\mathcal{H}_0\mathcal{H}_1]\rangle$ & $\langle[\mathcal{H}_0\mathcal{H}_1\mathcal{H}_2]\rangle$ \raisebox{-2ex}{$N_6$} &\raisebox{-2ex}{ $(2, 2, 2)$ $(2, 4)$}\\[-1ex]
  &   $B=-1$    &  $\langle\tau, \rho, \sigma^2\rangle$  $\langle\tau, \rho\sigma\rangle$  & $\langle[\mathcal{H}_0], [\mathcal{H}_2]\rangle$ & \hspace{-0.5cm}$\langle[\mathcal{H}_0]\rangle$ & \\[1ex]\hline
& $B=1$ &  $\langle\tau, \rho, \sigma^2\rangle$  $\langle\tau, \rho\sigma\rangle$ & $\langle[\mathcal{H}_0], [\mathcal{H}_2]\rangle$& \hspace{-0.5cm}$\langle[\mathcal{H}_0]\rangle$ &\\[-1ex]
\raisebox{2ex}{$\KK_7$ }
    & $B=-1$    &  $\langle\sigma\tau, \rho, \sigma^2\rangle$  $\langle\sigma\tau, \tau\rho\rangle$  & $\langle[\mathcal{H}_2], [\mathcal{H}_0\mathcal{H}_1]\rangle$ & $\langle[\mathcal{H}_0\mathcal{H}_1\mathcal{H}_2]\rangle$ \raisebox{2ex}{$N_7$}& \raisebox{2ex}{ $(2, 2, 2)$ $(2, 4)$} \\[1ex]\hline
\end{longtable}
 \begin{longtable}{| c c | c | c | c | c |}
\caption{\small{ $\kappa_{\KK_j}$ for the case $\left(\frac{p_1}{p_2}\right)=-1$.\label{23}}}\\
\hline
 $\KK_j$ & & $G_j$ &  $\kappa_{\KK_j}$ & $\kappa_{\KK_j}\cap N_j$ & $\mathbf{C}l_2(\KK_j)$\\
\hline
\endfirsthead
\hline
 $\KK_j$ & & $G_j$ &  $\kappa_{\KK_j}$ & $\kappa_{\KK_j}\cap N_j$ & $\mathbf{C}l_2(\KK_j)$ \\
\hline
\endhead
 $\KK_1$ & & $\langle\sigma, \rho\rangle$  & $\langle[\mathcal{H}_1], [\mathcal{H}_2]\rangle$ & $N_1$ & $(2, 4)$\\ \hline
  $\KK_2$ & & $\langle\sigma, \tau\rho\rangle$  &   $\langle[\mathcal{H}_0\mathcal{H}_1], [\mathcal{H}_0\mathcal{H}_2]\rangle$ & $N_2$  & $(2, 4)$\\ \hline
  & $q=1$ & & $\langle[\mathcal{H}_0], [\mathcal{H}_1\mathcal{H}_2]\rangle$ & $N_3$ & $(4, 2^{m})$ \\[-1ex]
  \raisebox{2ex}{$\KK_3$} &$q=2$ & \raisebox{2ex}{$\langle\tau, \sigma\rangle$}
  &  $\langle[\mathcal{H}_0]\rangle$ &  $\langle[\mathcal{H}_0]\rangle$ & $(4, 2^{m+1})$ \\ [1ex]\hline
& $q=1$ &  $\langle\rho, \tau\sigma\rangle$ $\langle\tau, \rho\sigma, \sigma^2\rangle$  &  $\langle[\mathcal{H}_1], [\mathcal{H}_0\mathcal{H}_2]\rangle$  $\langle[\mathcal{H}_0], [\mathcal{H}_1]\rangle$ & \qquad $\langle[\mathcal{H}_0]\rangle$ &\\[-1ex]
 \raisebox{2ex}{$\KK_4$}
 & $q=2$     &  $\langle\tau, \rho\rangle$ $\langle\sigma\tau, \tau\rho, \sigma^2\rangle$  & $\langle[\mathcal{H}_0], [\mathcal{H}_1]\rangle$ $\langle[\mathcal{H}_1], [\mathcal{H}_0\mathcal{H}_2]\rangle$ & \raisebox{2ex}{$N_4$} $\langle[\mathcal{H}_0\mathcal{H}_1\mathcal{H}_2]\rangle$  & \raisebox{2ex}{ $(2, 4)$ $(2, 2, 2)$}\\[1ex]\hline
 & $q=1$ &  $\langle\tau, \rho\sigma, \sigma^2\rangle$ $\langle\rho, \sigma\tau\rangle$ & $\langle[\mathcal{H}_0], [\mathcal{H}_1]\rangle$ $\langle[\mathcal{H}_1], [\mathcal{H}_0\mathcal{H}_2]\rangle$ & \hspace{-0.5cm}$\langle[\mathcal{H}_0]\rangle$  &\\[-1ex]
\raisebox{2ex}{$\KK_5$}
& $q=2$   & $\langle\sigma\tau, \tau\rho, \sigma^2\rangle$ $\langle\tau, \rho\rangle$  &  $\langle[\mathcal{H}_1], [\mathcal{H}_0\mathcal{H}_2]\rangle$  $\langle[\mathcal{H}_0], [\mathcal{H}_1]\rangle$ & $\langle[\mathcal{H}_0\mathcal{H}_1\mathcal{H}_2]\rangle$ \raisebox{2ex}{$N_5$} & \raisebox{2ex}{ $(2, 2, 2)$ $(2, 4)$}\\[1ex]\hline
& $q=1$ &  $\langle\tau, \rho, \sigma^2\rangle$ $\langle\sigma\tau, \tau\rho\rangle$  & $\langle[\mathcal{H}_0], [\mathcal{H}_2]\rangle$ $\langle[\mathcal{H}_2], [\mathcal{H}_0\mathcal{H}_1]\rangle$ & \hspace{-0.5cm}$\langle[\mathcal{H}_0]\rangle$ &\\[-1ex]
\raisebox{2ex}{$\KK_6$}
  & $q=2$       &  $\langle\rho, \sigma\tau, \sigma^2\rangle$  $\langle\tau, \rho\sigma\rangle$  & $\langle[\mathcal{H}_2], [\mathcal{H}_0\mathcal{H}_1]\rangle$ $\langle[\mathcal{H}_0], [\mathcal{H}_2]\rangle$ & $\langle[\mathcal{H}_0\mathcal{H}_1\mathcal{H}_2]\rangle$ \raisebox{2ex}{$N_6$}  & \raisebox{2ex}{ $(2, 2, 2)$ $(2, 4)$}\\[1ex]\hline
 & $q=1$ &  $\langle\sigma\tau, \tau\rho\rangle$  $\langle\rho, \tau, \sigma^2\rangle$  &$\langle[\mathcal{H}_2], [\mathcal{H}_0\mathcal{H}_1]\rangle$  $\langle[\mathcal{H}_0], [\mathcal{H}_2]\rangle$ & \hspace{0.5cm}$\langle[\mathcal{H}_0]\rangle$&\\[-1ex]
\raisebox{2ex}{$\KK_7$ }
  &   $q=2$    &  $\langle\tau, \rho\sigma\rangle$  $\langle\rho, \sigma\tau, \sigma^2\rangle$  & $\langle[\mathcal{H}_0], [\mathcal{H}_2]\rangle$  $\langle[\mathcal{H}_2], [\mathcal{H}_0\mathcal{H}_1]\rangle$ & \raisebox{2ex}{$N_7$ } $\langle[\mathcal{H}_0\mathcal{H}_1\mathcal{H}_2]\rangle$  & \raisebox{2ex}{ $(2, 4)$ $(2, 2, 2)$} \\[1ex]\hline
\end{longtable}
\normalsize
Before proving these results, note that,  from   Tables \ref{21}, \ref{22} and \ref{23} we get the following remark:
\begin{rema}
Put $B=(\frac{1+i}{\pi_1})(\frac{1+i}{\pi_3})$ and $\pi=(\frac{\pi_1}{\pi_3})$.
  \begin{enumerate}[\rm\indent(1)]
\item
    $\left\{\begin{array}{ll}
\kappa_{ \KK_1}=N_2,\  \
   \kappa_{ \KK_2}=N_1 \text{ if } (\frac{p_1}{p_2})=1,\\
\kappa_{ \KK_1}=N_1,\ \
   \kappa_{ \KK_2}=N_2 \text{ if } (\frac{p_1}{p_2})=-1.
   \end{array}\right.$
\item Assume that $(\frac{p_1}{p_2})=1$, so\\
 $\bullet$ If $\pi=1$, then   $\kappa_{\KK_4}=N_4$,  $\kappa_{ \KK_5}=N_5$,
 $\kappa_{ \KK_6}=N_6$  and  $\kappa_{ \KK_7}=N_7$.\\
  $\bullet$ Else\hspace{0.5cm}  $\kappa_{\KK_4}=N_7$,\quad $\kappa_{ \KK_5}=N_6$,\quad $\kappa_{ \KK_6}=N_5$ \quad and\quad  $\kappa_{ \KK_7}=N_4$.\\
\item Assume that $(\frac{p_1}{p_2})=-1$ and $q=1$, so\\
 $\kappa_{ \KK_4} =
\left\{\begin{array}{ll}
N_4 \hbox{ if } \pi=-1,\\
N_7 \hbox{ if } \pi=1.
      \end{array}
    \right.$
 $\kappa_{ \KK_5} =
\left\{\begin{array}{ll}
N_6 \hbox{ if } \pi=-1,\\
N_5 \hbox{ if } \pi=1.
      \end{array}
    \right.$\\
 $\kappa_{ \KK_6} =
\left\{\begin{array}{ll}
N_5 \hbox{ if } \pi=-1,\\
N_6 \hbox{ if } \pi=1.
      \end{array}
    \right.$
 $\kappa_{ \KK_7} =
\left\{\begin{array}{ll}
N_7 \hbox{ if } \pi=-1,\\
N_4 \hbox{ if } \pi=1.
      \end{array}
    \right.$\\
\item Assume  that $(\frac{p_1}{p_2})=-1$ and $q=2$, so\\
 $\kappa_{ \KK_4} =
\left\{\begin{array}{ll}
N_4 \hbox{ if } \pi=-1,\\
N_7 \hbox{ if } \pi=1.
      \end{array}
    \right.$
 $\kappa_{ \KK_5} =
\left\{\begin{array}{ll}
N_6 \hbox{ if } \pi=-1,\\
N_5 \hbox{ if } \pi=1.
      \end{array}
    \right.$\\
 $\kappa_{ \KK_6} =
\left\{\begin{array}{ll}
N_5 \hbox{ if } \pi=-1,\\
N_6 \hbox{ if } \pi=1.
      \end{array}
    \right.$
 $\kappa_{ \KK_7} =
\left\{\begin{array}{ll}
N_7 \hbox{ if } \pi=-1,\\
N_4 \hbox{ if } \pi=1.
      \end{array}
    \right.$
\end{enumerate}
\end{rema}
To check the tables inputs,   we use the following relations:\\
$\bullet$ $\sigma=\displaystyle\left(\frac{L/\KK_3}{\mathfrak{P}}\right)=
\displaystyle\left(\frac{L/\kk}{N_{\KK_3/\kk}(\mathfrak{P})}\right)=
\displaystyle\left(\frac{L/\kk}{\mathcal{H}_1\mathcal{H}_2}\right)$, since $N_{\KK_3/\kk}(\mathfrak{P})\sim \mathcal{H}_1\mathcal{H}_2$.\\
$\bullet$ $\tau=\displaystyle\left(\frac{L/\KK_3}{\mathfrak{A}}\right)=
\displaystyle\left(\frac{L/\kk}{N_{\KK_3/\kk}(\mathfrak{A})}\right)=
\displaystyle\left(\frac{L/\kk}{\mathcal{H}_0}\right)$, because $N_{\KK_3/\kk}(\mathfrak{A})\sim \mathcal{H}_0$.\\
$\bullet$ $\rho=\displaystyle\left(\frac{L/\kk}{\mathcal{H}_1}\right)$.\\
Recall that the Artin map $\phi$  induces the following commutative diagram:
\begin{figure}[H]
\xymatrix{
 &&&& \mathbf{C}l_2(\kk)\ar[d]_{j_{\KK_j/\kk}} \ar[r]^- \phi & G/G' \ar[d]^{{\rm V}_{G/G_j}}\\
 &&&& \mathbf{C}l_2(\KK_j) \ar[r]^-\phi & G_j/G_j' }
\end{figure}
\hspace{-0.4cm}the rows are isomorphisms and ${\rm V}_{G/G_j}: G/G' \longrightarrow G_j/G_j'$ is the group transfer map (Verlagerung) which has the following simple characterization when $G_j$ is of index $2$ in $G$. Let $G=G_j\cup zG_j$,  then
$${\rm V}_{G/G_j}(gG')=\left\{
        \begin{array}{ll}
          gz^{-1}gz.G_j'=g^2[g, z].G_j'  & \hbox{ if $g\in G_j$;} \\
          g^2G_j'  & \hbox{ if $g \notin G_j$.}
        \end{array}
      \right.$$
Thus $\kappa_{\KK_j}=\ker j_{\KK_j/\kk}$ is determined by $\ker{\rm V}_{G/ G_j}$.\\
\indent (a) Consider the extension  $\KK_1$; we know that  $G=\langle\sigma, \tau, \rho\rangle$ and, according to the table \ref{21},\\
$N_1=\left\{ \begin{array}{ll}
 \langle[\mathcal{H}_0\mathcal{H}_1], [\mathcal{H}_0\mathcal{H}_2]\rangle=\langle[\mathcal{H}_1\mathcal{H}_2], [\mathcal{H}_0\mathcal{H}_1]\rangle & \text{ if }\left(\frac{p_1}{p_2}\right)=1,\\
  \langle[\mathcal{H}_1], [\mathcal{H}_2]\rangle=\langle[\mathcal{H}_1\mathcal{H}_2], [\mathcal{H}_1]\rangle & \text{ if }\left(\frac{p_1}{p_2}\right)=-1.
 \end{array} \right.$\\
 Thus $G_1=\mathrm{Gal}(L/\KK_1)=\left\{ \begin{array}{ll}
 \langle\sigma, \tau\rho, G'\rangle=\langle\sigma, \tau\rho, \tau^2\rangle & \text{ if }\left(\frac{p_1}{p_2}\right)=1,\\
 \langle\sigma, \rho, G'\rangle=\langle\sigma, \rho, \tau^2\rangle=\langle\sigma, \rho\rangle & \text{ if }\left(\frac{p_1}{p_2}\right)=-1.
 \end{array} \right.$\\
This implies that $G/G_1=\langle\tau\rangle=\{1, \tau G_1\}$; as\\
 $[\tau\rho, \sigma]=[\rho, \sigma]=\left\{ \begin{array}{ll}
                                    \sigma^2 & \text{ if } q=1,\\
                                     \sigma^{-2^m+2}=\sigma^{-2} & \text{ if } q=2,\end{array} \right.$ and
$[\tau\rho, \tau^2]=\tau^4$.\\
So $G_1 '=\left\{ \begin{array}{ll}
\langle\tau^4, \sigma^2\rangle & \text{ if }\left(\frac{p_1}{p_2}\right)=1,\\
\langle \sigma^2\rangle & \text{ if }\left(\frac{p_1}{p_2}\right)=-1;
\end{array} \right.$
from which we deduce that\\ $\mathrm{Gal}(L/\KK_1)=G_1/G_1'\simeq\left\{ \begin{array}{ll}
 (2, 2, 2) & \text{ if }\left(\frac{p_1}{p_2}\right)=1,\\
 (2, 4)  & \text{ if }\left(\frac{p_1}{p_2}\right)=-1,
 \end{array} \right.$ since $(\tau\rho)^2=\rho^2\in G_1 '$.\\
Compute the kernel of ${\rm V}_{G/G_1}$.
\begin{enumerate}[\indent $\ast$]
\item ${\rm V}_{G/G_1}(\sigma G')=\sigma^2[\sigma,\rho]G_1'=\sigma^2\sigma^{-2}G_1'\text{ ou } \sigma^{4}G_1'=G_1'$.
\item ${\rm V}_{G/G_1}(\tau G')=\tau^2G_1'\neq G_1'$.
 \item ${\rm V}_{G/G_1}(\rho G')=\rho^2G_1'= G_1'$.
 \end{enumerate}
  Consequently $$ker{\rm V}_{G/G_1}=\langle\sigma G', \rho G'\rangle,$$ thus $$\kappa_{\KK_1}=\langle[\mathcal{H}_1\mathcal{H}_2], [\mathcal{H}_1]\rangle=\langle[\mathcal{H}_1], [\mathcal{H}_2]\rangle.$$\\
\indent (b) For   $\KK_2$, we proceed similarly, we get\\
 $G_2=\mathrm{Gal}(L/\KK_2)=\left\{ \begin{array}{ll}
 \langle\sigma, \rho, \tau^2\rangle &\text{ if }\left(\frac{p_1}{p_2}\right)=1,\\
 \langle\sigma, \tau\rho\rangle &\text{ if }\left(\frac{p_1}{p_2}\right)=-1;
  \end{array} \right.$ this implies that\\
$G_2 '=\left\{ \begin{array}{ll}
\langle\tau^4, \sigma^2\rangle & \text{ if }\left(\frac{p_1}{p_2}\right)=1,\\
\langle \sigma^2\rangle & \text{ if }\left(\frac{p_1}{p_2}\right)=-1;
\end{array} \right.$
from which we deduce that\\ $\mathrm{Gal}(L/\KK_2)=G_2/G_2'\simeq\left\{ \begin{array}{ll}
 (2, 2, 2) & \text{ if }\left(\frac{p_1}{p_2}\right)=1,\\
 (2, 4)  & \text{ if }\left(\frac{p_1}{p_2}\right)=-1.
 \end{array} \right.$\\
Thus
\begin{enumerate}[\indent $\ast$]
\item ${\rm V}_{G/G_2}(\sigma G')=\sigma^2[\sigma,\rho]G_2'=\sigma^2\sigma^{-2}G_2'\text{ or } \sigma^{4}G_2'=G_2'$.
\item ${\rm V}_{G/G_2}(\tau G')=\tau^2G_2'\neq G_2'$.
 \item ${\rm V}_{G/G_2}(\rho G')=\rho^2[\rho, \tau]G_2'=\tau^2 G_2'\neq G_2'$.
 \item ${\rm V}_{G/G_2}(\tau\rho G')=(\tau\rho)^2G_2'=\rho^2 G_2'= G_2'$.
 \end{enumerate}
  Therefore $$ker{\rm V}_{G/G_2}=\langle\sigma G', \tau\rho G'\rangle,$$ hence  $$\kappa_{\KK_2}=\langle[\mathcal{H}_1\mathcal{H}_2], [\mathcal{H}_0\mathcal{H}_1]\rangle=\langle[\mathcal{H}_0\mathcal{H}_1], [\mathcal{H}_0\mathcal{H}_2]\rangle.$$
 \indent (c) Take  $\KK_4=\kk(\sqrt{\pi_1\pi_3})$ and assume $\left(\frac{p_1}{p_2}\right)=1$.  We have to consider the following two cases:\\
 \indent  1\up{st} case:  Suppose that $\left(\frac{p_1}{p_2}\right)_4\left(\frac{p_2}{p_1}\right)_4=-1$, then  $n=1$, $m\geq3$, $q=1$, $N(\varepsilon_{p_1p_2})=1$ and, from  Lemma  \ref{4}, $\left(\frac{\pi_1}{\pi_3}\right)=-1$;  this in turn has two sub-cases:\\
  $(\alpha)$ If  $\left(\frac{1+i}{\pi_1}\right)\left(\frac{1+i}{\pi_3}\right)=1$, then Table  \ref{21} implies that \\ $N_{\KK_4/\kk}(\mathbf{C}l_2(\KK_4))=\langle[\mathcal{H}_0], [\mathcal{H}_2]\rangle$. As $\left(\frac{L/\kk}{\mathcal{H}_2}\right)=\left(\frac{L/\kk}{\mathcal{H}_1}\right)^{-1}
  \left(\frac{L/\kk}{\mathcal{H}_1\mathcal{H}_2}\right)=\rho^{-1}\sigma$,  so
   $$G_4=\mathrm{Gal}(L/\KK_4/)=\langle\tau, \rho^{-1}\sigma, G'\rangle.$$
    On the other hand, in this sub-case we have $\rho^{-1}\sigma\rho=\sigma^{-1}$, thus $\rho^{-1}\sigma=(\rho\sigma)^{-1}$, therefore  $$G_4=\mathrm{Gal}(L/\KK_4/)=\langle\tau, \rho\sigma, \sigma^2\rangle.$$  Since  $[\rho\sigma,\sigma^2]=\sigma^4$, $[\rho\sigma,\tau]=\tau^2$ and $[\sigma, \tau^2]=1$,  so $G_4'=\langle\tau^2, \sigma^4\rangle$. From which we deduce that  $G_4/G_4'\simeq(2, 2, 2)$, since $(\rho\sigma)^2=\rho^2=\sigma^{2^{m-1}}$. Moreover  $G/G_4=\langle\sigma\rangle$, then:
 \begin{enumerate}[\indent $\ast$]
\item ${\rm V}_{G/G_4}(\sigma G')=\sigma^2G_4'\neq G_4'.$
\item ${\rm V}_{G/G_4}(\tau G')=\tau^2[\tau,\sigma]G_4'= G_4'$.
 \item ${\rm V}_{G/G_4}(\rho G')=\rho^2G_4'=G_4'$.
 \end{enumerate}
   Consequently $$ker{\rm V}_{G/G_4}=\langle\tau G', \rho G'\rangle,$$ and thus $$\kappa_{\KK_4}=\langle[\mathcal{H}_0], [\mathcal{H}_1]\rangle.$$
  $(\beta)$ If  $\left(\frac{1+i}{\pi_1}\right)\left(\frac{1+i}{\pi_3}\right)=-1$, similarly we get  $$N_4=\langle[\mathcal{H}_2], [\mathcal{H}_0\mathcal{H}_1]\rangle=\langle[\mathcal{H}_0\mathcal{H}_1\mathcal{H}_2], [\mathcal{H}_0\mathcal{H}_1]\rangle,$$
   thus
   $$G_4=\mathrm{Gal}(L/\KK_4)=\langle\sigma\tau, \tau\rho, \sigma^2\rangle.$$
  As $[\tau\rho,\sigma\tau]=[\rho, \tau][\rho, \sigma]=(\sigma\tau)^2$ and $[\tau\rho,\sigma^2]=\sigma^4$, so $G_4'=\langle(\sigma\tau)^2\rangle$, since $\sigma^4=\sigma^4\tau^4=(\sigma\tau)^4$. It is clear that $(\sigma\tau)^2$,  $\sigma^4$ and  $(\tau\rho)^2=\rho^2=(\tau\sigma)^{2^{m-1}}$ are in $G_4'$; hence  $G_4/G_4'\simeq(2, 2, 2)$.\\
  Let us compute the kernel of ${\rm V}_{G/G_4}$. Since $G/G_4=\langle\sigma\rangle$, then:
 \begin{enumerate}[\indent $\ast$]
\item ${\rm V}_{G/G_4}(\sigma G')=\sigma^2G_4'\neq G_4'.$
\item ${\rm V}_{G/G_4}(\sigma\tau G')=(\sigma\tau)^2[\sigma\tau,\sigma]G_4'= G_4'$.
 \item ${\rm V}_{G/G_4}(\rho G')=\rho^2G_4'=G_4'$.
 \end{enumerate}
   This implies that $$ker{\rm V}_{G/G_4}=\langle\rho G', \sigma\tau G'\rangle,$$ hence $$\kappa_{\KK_2}=\langle[\mathcal{H}_1], [\mathcal{H}_1\mathcal{H}_2\mathcal{H}_0]\rangle=\langle[\mathcal{H}_0\mathcal{H}_2], [\mathcal{H}_1]\rangle.$$
\indent - 2\up{nd} case:  Suppose $\left(\frac{p_1}{p_2}\right)_4\left(\frac{p_2}{p_1}\right)_4=1$, then  $m=2$, $n\geq2$ and, from Lemma \ref{4}, $\left(\frac{\pi_1}{\pi_3}\right)=1$.
  Similarly, there are   two sub-cases to distinguish:\\
  $(\alpha)$ If  $\left(\frac{1+i}{\pi_1}\right)\left(\frac{1+i}{\pi_3}\right)=1$, then the  table \ref{21} implies that $N_4=\langle[\mathcal{H}_0], [\mathcal{H}_1]\rangle$, from which we deduce that
   $$G_4=\mathrm{Gal}(L/\KK_4/)=\langle\tau, \rho, G'\rangle=\langle\tau, \rho, \sigma^2\rangle;$$ as  $\rho^{2}=\sigma^{2}$ or $\rho^{2}=\sigma^{2}\tau^{2^n}$,  so $$G_4=\mathrm{Gal}(L/\KK_4/)=\langle\tau, \rho\rangle.$$  We know that $[\rho,\tau]=\tau^2$,  thus $G_4'=\langle\tau^2\rangle$, this in turn yields that   $G_4/G_4'\simeq(2, 4)$, since $\rho^4=1$. Moreover $G/G_4=\langle\sigma\rangle$, so:
 \begin{enumerate}[\indent $\ast$]
\item ${\rm V}_{G/G_4}(\sigma G')=\sigma^2G_4'\neq G_4'.$
\item ${\rm V}_{G/G_4}(\tau G')=\tau^2[\tau,\sigma]G_4'= G_4'$.
 \item ${\rm V}_{G/G_4}(\rho G')=\rho^2[\rho,\sigma]G_4'=\left\{
                                                                 \begin{array}{ll}
                                                                 \rho^2\sigma^2G_4'=\sigma^4G_4'=G_4', \text{ if } q=1,\\
                                                                 \rho^2\sigma^{-2}G_4'=\sigma^2\sigma^{-2}\tau^{2^n}G_4'=G_4', \text{ if } q=2;
                                                                 \end{array}\right.$
 \end{enumerate}
   Consequently $$ker{\rm V}_{G/G_4}=\langle\tau G', \rho G'\rangle,$$ and thus $$\kappa_{\KK_4}=\langle[\mathcal{H}_0], [\mathcal{H}_1]\rangle.$$
  $(\beta)$ If  $\left(\frac{1+i}{\pi_1}\right)\left(\frac{1+i}{\pi_3}\right)=-1$, then  we get   $$N_4=\langle[\mathcal{H}_1], [\mathcal{H}_0\mathcal{H}_2]\rangle=\langle[\mathcal{H}_1\mathcal{H}_2\mathcal{H}_0], [\mathcal{H}_1]\rangle,$$
   hence
   $$G_4=\mathrm{Gal}(L/\KK_4/)=\langle\sigma\tau, \rho, \sigma^2, \tau^2\rangle=\langle\sigma\tau, \rho\rangle.$$
  As $[\rho,\sigma\tau]=[\rho, \tau][\rho, \sigma]=
 \left\{
 \begin{array}{ll}
 (\sigma\tau)^2 & \text{ if } q=1,\\
 \sigma^{-2}\tau^2 & \text{ if } q=2,
 \end{array}\right.$\\
   so $G_4'= \left\{
 \begin{array}{ll}
 \langle(\sigma\tau)^2\rangle & \text{ if } q=1,\\
 \langle\sigma^{-2}\tau^2\rangle & \text{ if } q=2.
 \end{array}\right.$\\ Moreover, as $\sigma^{-2}\tau^2=\sigma^{-4}\sigma^2\tau^2=\tau^{-2^{n+1}}(\sigma\tau)^2=(\sigma\tau)^{2+2^{n+1}}$,  so $G_4'=
 \langle(\sigma\tau)^2\rangle$, thus  $G_4/G_4'\simeq(2, 4)$. On the other hand, $G/G_4=\langle\tau\rangle$, hence:
 \begin{enumerate}[\indent $\ast$]
\item ${\rm V}_{G/G_4}(\sigma G')=\sigma^2G_4'\neq G_4'.$
\item ${\rm V}_{G/G_4}(\tau G')=\tau^2G_4'\neq G_4'$.
 \item ${\rm V}_{G/G_4}(\rho G')=\rho^2[\rho, \tau]G_4'=\rho^2\tau^2G_4'=\\
  \left\{
 \begin{array}{ll}
 (\sigma\tau)^2G_4'=G_4' & \text{ if } q=1,\\
 \sigma^{2}\tau^{2^n}\tau^2G_4'=\sigma^{2}\tau^{-4}\tau^2G_4'=\sigma^{2}\tau^{-2}G_4'=G_4' & \text{ if } q=2;
 \end{array}\right.$\\
 since in the second case ($q=2$) we have  $\tau^{2^{n+2}}=1$, thus $\tau^{2^n}=\tau^{-4}$.
 \item ${\rm V}_{G/G_4}(\sigma\tau G')=(\sigma\tau)^2[\sigma\tau,\tau]G_4'= G_4'$.
 \end{enumerate}
   Therefore $$ker{\rm V}_{G/G_4}=\langle\rho G', \sigma\tau G'\rangle,$$ from which we deduce that $$\kappa_{\KK_4}=\langle[\mathcal{H}_1], [\mathcal{H}_1\mathcal{H}_2\mathcal{H}_0]\rangle=\langle[\mathcal{H}_0\mathcal{H}_2], [\mathcal{H}_1]\rangle.$$
 {\em Conclusion}:  Let $\KK_4=\kk(\sqrt{\pi_1\pi_3})$ and $G_4=\mathrm{Gal}(L/\KK_4)$. Then $\mathbf{C}l_2(\KK_4)$ is of type $(2, 2, 2)$ if  $\left(\frac{p_1}{p_2}\right)_4\left(\frac{p_2}{p_1}\right)_4=-1$, and of type $(2, 4)$ otherwise. Moreover
\begin{enumerate}[\indent(i)]
  \item If $\left(\frac{1+i}{\pi_1}\right)\left(\frac{1+i}{\pi_3}\right)=1$, then $ker{\rm V}_{G/G_4}=\langle\tau G', \rho G'\rangle$ and $\kappa_{\KK_4}=\langle[\mathcal{H}_0], [\mathcal{H}_1]\rangle$.
  \item If $\left(\frac{1+i}{\pi_1}\right)\left(\frac{1+i}{\pi_3}\right)=-1$, then $ker{\rm V}_{G/G_4}=\langle\rho G', \sigma\tau G'\rangle$, and  $\kappa_{\KK_4}=\langle[\mathcal{H}_0\mathcal{H}_2], [\mathcal{H}_1]\rangle$.
\end{enumerate}
Assume now that $\left(\frac{p_1}{p_2}\right)=-1$. We have also two cases to distinguish:\\
 \indent  1\up{st} case:  Suppose $q=1$, so  Lemma \ref{12} yields that  $n=1$,  $m\geq3$.  Then  we need  to consider two sub-cases:\\
  $\bullet$ If $\left(\frac{\pi_1}{\pi_3}\right)=1$, then $\left(\frac{1+i}{\pi_1}\right)\left(\frac{1+i}{\pi_3}\right)=1$, hence  $$N_4=\langle[\mathcal{H}_0], [\mathcal{H}_2]\rangle \text{ and } G_4=\mathrm{Gal}(L/\KK_4/)=\langle\tau, \rho\sigma, \sigma^2\rangle.$$
   So $G_4'=\langle\tau^2, \sigma^4\rangle$,  which involves that  $G_4/G_4'\simeq(2, 2, 2)$, since $(\rho\sigma)^2=\rho^2=\tau^2\sigma^{2^{m-1}}$. Moreover  $G/G_4=\langle\sigma\rangle$, then:
 \begin{enumerate}[\indent $\ast$]
\item ${\rm V}_{G/G_4}(\sigma G')=\sigma^2G_4'\neq G_4'.$
\item ${\rm V}_{G/G_4}(\tau G')=\tau^2[\tau,\sigma]G_4'= G_4'$.
 \item ${\rm V}_{G/G_4}(\rho G')=\rho^2G_4'=G_4'$.
 \end{enumerate}
   Consequently $$ker{\rm V}_{G/G_4}=\langle\tau G', \rho G'\rangle,$$ and thus $$\kappa_{\KK_2}=\langle[\mathcal{H}_0], [\mathcal{H}_1]\rangle.$$
 $\bullet$ If $\left(\frac{\pi_1}{\pi_3}\right)=-1$, then  $\left(\frac{1+i}{\pi_1}\right)\left(\frac{1+i}{\pi_3}\right)=-1$, similarly we get  $$N_4=\langle[\mathcal{H}_1], [\mathcal{H}_0\mathcal{H}_2]\rangle=\langle[\mathcal{H}_0\mathcal{H}_1\mathcal{H}_2], [\mathcal{H}_1]\rangle,$$
   thus
   $$G_4=\mathrm{Gal}(L/\KK_4/)=\langle\sigma\tau, \rho\rangle.$$
 So $G_4'=\langle(\sigma\tau)^2\rangle$,  hence  $G_4/G_4'\simeq(2, 4)$.
 As $G/G_4=\langle\tau\rangle$, thus
 \begin{enumerate}[\indent $\ast$]
\item ${\rm V}_{G/G_4}(\sigma G')=\sigma^2G_4'\neq G_4'.$
\item ${\rm V}_{G/G_4}(\sigma\tau G')=(\sigma\tau)^2[\sigma\tau,\sigma]G_4'= G_4'$.
 \item ${\rm V}_{G/G_4}(\rho G')=\rho^2[\rho,\tau]G_4'=\rho^2\tau^2G_4'=G_4'$.
 \end{enumerate}
   This implies that $$ker{\rm V}_{G/G_4}=\langle\rho G', \sigma\tau G'\rangle,$$ hence $$\kappa_{\KK_2}=\langle[\mathcal{H}_1], [\mathcal{H}_1\mathcal{H}_2\mathcal{H}_0]\rangle=\langle[\mathcal{H}_0\mathcal{H}_2], [\mathcal{H}_1]\rangle.$$
 {\em Conclusion}. Let   $\KK_4=\kk(\sqrt{\pi_1\pi_3})$;  assume that $\left(\frac{p_1}{p_2}\right)=-1$ and $q=1$.  If $\left(\frac{\pi_1}{\pi_3}\right)=-1$, then $\mathbf{C}l_2(\KK_4)$  is of type $(2, 4)$ and of type $(2, 2, 2)$ otherwise. Moreover
\begin{enumerate}[\indent(i)]
  \item If $\left(\frac{\pi_1}{\pi_3}\right)=1$, then $ker{\rm V}_{G/G_4}=\langle\rho G', \tau G'\rangle$ and $\kappa_{\KK_4}=\langle[\mathcal{H}_0], [\mathcal{H}_1]\rangle$.
  \item If $\left(\frac{\pi_1}{\pi_3}\right)=-1$, then $ker{\rm V}_{G/G_4}=\langle\rho G', \sigma\tau G'\rangle$ and   $\kappa_{\KK_4}=\langle[\mathcal{H}_1], [\mathcal{H}_0\mathcal{H}_2]\rangle$.
\end{enumerate}
  \indent  2\up{nd} case:  Suppose $q=2$, so,  according to Lemma \ref{12},  $n=1$ and $m=2$.  Then  we have two sub-cases  to consider :\\
  $\bullet$ If $\left(\frac{\pi_1}{\pi_3}\right)=1$, then $\left(\frac{1+i}{\pi_1}\right)\left(\frac{1+i}{\pi_3}\right)=-1$, hence  $$N_4=\langle[\mathcal{H}_2], [\mathcal{H}_0\mathcal{H}_1]\rangle \text{ and } G_4=\mathrm{Gal}(L/\KK_4/)=\langle\sigma\tau, \tau\rho, \sigma^2\rangle.$$
   So $G_4'=\langle\rho^2, \sigma^4\rangle$,  since $\rho^2=\tau^2\sigma^2$;  which implies that  $G_4/G_4'\simeq(2, 2, 2)$. Moreover  $G/G_4=\langle\rho\rangle$, then:
 \begin{enumerate}[\indent $\ast$]
\item ${\rm V}_{G/G_4}(\sigma G')=\sigma^2G_4'\neq G_4'.$
\item ${\rm V}_{G/G_4}(\tau G')=\tau^2G_4'\neq G_4'$.
 \item ${\rm V}_{G/G_4}(\rho G')=\rho^2G_4'=G_4'$.
\item ${\rm V}_{G/G_4}(\sigma\tau G')=(\sigma\tau)^2[\sigma\tau, \rho]G_4'=(\sigma\tau)^2(\sigma\tau)^{-2}\sigma^4G_4'=G_4'$.
 \end{enumerate}
   Consequently $$ker{\rm V}_{G/G_4}=\langle\rho G', \sigma\tau G'\rangle,$$  thus $$\kappa_{\KK_2}=\langle[\mathcal{H}_1], [\mathcal{H}_0\mathcal{H}_2]\rangle.$$
 $\bullet$ If $\left(\frac{\pi_1}{\pi_3}\right)=-1$, then  $\left(\frac{1+i}{\pi_1}\right)\left(\frac{1+i}{\pi_3}\right)=1$, similarly we get  $$N_4=\langle[\mathcal{H}_1], [\mathcal{H}_0]\rangle \text{ and } G_4=\mathrm{Gal}(L/\KK_4/)=\langle\tau, \rho\rangle.$$
 So $G_4'=\langle\tau^2\rangle$,  hence  $G_4/G_4'\simeq(2, 4)$.
 As $G/G_4=\langle\sigma\rangle$, thus
 \begin{enumerate}[\indent $\ast$]
\item ${\rm V}_{G/G_4}(\sigma G')=\sigma^2G_4'\neq G_4'.$
\item ${\rm V}_{G/G_4}(\tau G')=\tau^2G_4'= G_4'$.
 \item ${\rm V}_{G/G_4}(\rho G')=\rho^2[\rho,\sigma]G_4'=\rho^2\sigma^{-2}G_4'=G_4'$.
 \end{enumerate}
   This implies that $$ker{\rm V}_{G/G_4}=\langle\rho G', \tau G'\rangle,$$ hence $$\kappa_{\KK_2}=\langle[\mathcal{H}_1], [\mathcal{H}_0]\rangle.$$
 {\em Conclusion}. Let   $\KK_4=\kk(\sqrt{\pi_1\pi_3})$;  assume that $\left(\frac{p_1}{p_2}\right)=-1$ and $q=2$. Then $\mathbf{C}l_2(\KK_4)$  is of type $(2, 4)$ if $\left(\frac{\pi_1}{\pi_3}\right)=-1$,   and of type $(2, 2, 2)$ otherwise. Moreover
\begin{enumerate}[\indent(i)]
  \item If $\left(\frac{\pi_1}{\pi_3}\right)=-1$, then $ker{\rm V}_{G/G_4}=\langle\rho G', \tau G'\rangle$ and   $\kappa_{\KK_4}=\langle[\mathcal{H}_0], [\mathcal{H}_1]\rangle$.
  \item Else $ker{\rm V}_{G/G_4}=\langle\rho G', \sigma\tau G'\rangle$ and $\kappa_{\KK_4}=\langle[\mathcal{H}_1], [\mathcal{H}_0\mathcal{H}_2]\rangle$.
\end{enumerate}
Proceeding similarly we show the other tables inputs.
\subsubsection{\bf Capitulation kernels $\kappa_{\LL_j}$ and $\mathrm{Gal}(\L/\LL_j)$} From the subsection \ref{17}, we deduce that $\kappa_{\LL_j}=\mathbf{C}l_2(\kk)$. In what follows, we compute the Galois groups $\mathcal{G}_j=\mathrm{Gal}(\L/\LL_j)$, their derived groups $\mathcal{G}_j'$ and the abelian type invariants of $\mathbf{C}l_2(\LL_j)$. The results are summarized in the following tables; note that the left hand side of columns (if it exists) refers to the case $(\frac{1+i}{\pi_1})(\frac{1+i}{\pi_3})=1$, the right hand side to the case $(\frac{1+i}{\pi_1})(\frac{1+i}{\pi_3})=-1$.\\  Put
 $\left\{
 \begin{array}{ll}
 \alpha=\min(n, m) \text{ and } \beta=\max(n+1, m+1),\\
 a=\min(n, m-1) \text{ and } b=\max(n+1, m),\\
 \pi=(\frac{\pi_1}{\pi_3}),\\
 B=(\frac{1+i}{\pi_1})(\frac{1+i}{\pi_3}).
 \end{array}
 \right.$
 \tiny
 \begin{longtable}{|c c | c | c | c |}
\caption{ \small{Invariants of $\mathbf{C}l_2(\LL_j)$ for the case $\left(\frac{p_1}{p_2}\right)=1$}}\\
\hline
 $\LL_j$  & & $\mathcal{G}_j$ & $\mathcal{G}_j'$ & $\mathbf{C}l_2(\LL_j)$\\
\hline
\endfirsthead
\hline
 $\LL_j$ & &$\mathcal{G}_j$ & $\mathcal{G}_j$ & $\mathbf{C}l_2(\LL_j)$\\
\hline
\endhead
 & && & $(2^n, 2^{m})$ if $q=1$\\[-1ex]
 \raisebox{2ex}{$\LL_1$} && \raisebox{2ex}{$\langle \tau^2, \sigma\rangle$} & \raisebox{2ex}{$\langle1\rangle$} &
  $(2^{\alpha}, 2^{\beta})$ if $q=2$\\[1ex] \hline
  & $\pi=-1$ &  $\langle\sigma\tau\rho, \sigma^2, \tau^2\rangle$ $\langle\tau\rho, \sigma^2, \tau^2\rangle$  & $\langle\sigma^4, \tau^4\rangle$ & $(2, 2, 2)$\\[-1ex]
  \raisebox{2ex}{$\LL_2$}
 &$\pi=1$ &  $\langle\tau\rho, \tau^2\rangle$ $\langle\sigma\tau\rho, \tau^2\rangle$   & $\langle \tau^4\rangle$ & $(2, 4)$\\[1ex]\hline
 &$\pi=-1$ &  $\langle\tau\rho, \sigma^2, \tau^2\rangle$ $\langle\sigma\tau\rho, \sigma^2, \tau^2\rangle$  & $\langle\sigma^4, \tau^4\rangle$ & $(2, 2, 2)$\\[-1ex]
\raisebox{2ex}{$\LL_3$}
& $\pi=1$& $\langle\sigma\tau\rho,  \tau^2\rangle$ $\langle\tau\rho,  \tau^2\rangle$  & $\langle \tau^4\rangle$ & $(2, 4)$\\ [1ex]\hline
  & $\pi=-1$&   $\langle \sigma\rho, \sigma^2, \tau^2\rangle$  & $\langle\sigma^4, \tau^4\rangle$ & $(2, 2, 2)$\\[-1ex]
 \raisebox{2ex}{$\LL_4$}
    &$\pi=1$     &    $\langle \rho,  \tau^2\rangle$  & $\langle \tau^4\rangle$ & $(2, 4)$ \\[1ex]\hline
  & $\pi=-1$&  $\langle \rho, \sigma^2, \tau^2\rangle$  & $\langle\sigma^4, \tau^4\rangle$ & $(2, 2, 2)$\\[-1ex]
\raisebox{1.7ex}{$\LL_5$}
  &$\pi=1$&   $\langle \rho\sigma,  \tau^2\rangle$  & $\langle \tau^4\rangle$ & $(2, 4)$\\[1ex]\hline
 & $B=-1$ & $\langle\sigma\tau, \sigma^2\rangle$  & $\langle1\rangle$ & $\left\{\begin{array}{ll}(2^{a}, 2^{b}) \text{ if } q=1\\
                          (2, 2^{n+2}) \text{ if } q=2 \end{array}\right.$ \\[-1ex]
\raisebox{2ex}{$\LL_6$}
      & $B=1$  & $\langle\tau, \sigma^2\rangle$   & $\langle1\rangle$ &
          $\left\{\begin{array}{ll}(2^{m-1}, 2^{n+1}) \text{ if } q=1\\
                          (2, 2^{n+2}) \text{ if } q=2 \end{array}\right.$\\[1ex]\hline
 &$B=-1$ &  $\langle\tau, \sigma^2\rangle$     & $\langle1\rangle$ &
   $\left\{\begin{array}{ll}(2^{m-1}, 2^{n+1}) \text{ if } q=1\\
                          (2, 2^{n+2}) \text{ if } q=2 \end{array}\right.$\\[-1ex]
\raisebox{2ex}{$\LL_7$}
   & $B=1$  &  $\langle\sigma\tau, \sigma^2\rangle$   & $\langle1\rangle$ &
           $\left\{\begin{array}{ll}(2^{a}, 2^{b}) \text{ if } q=1\\
                          (2, 2^{n+2}) \text{ if } q=2 \end{array}\right.$\\[1ex]\hline
\end{longtable}
\normalsize
For the following table,  the left hand side of columns (if it exists) refers to the case $(\frac{\pi_1}{\pi_3})=-1$, the right hand side refers to the case $(\frac{\pi_1}{\pi_2})=1$. Recall that if  $(\frac{p_1}{p_2})=-1$, then $n=1$ and
 $\left\{\begin{array}{ll}
 m=2 & \text{ if } q=2,\\
m\geq2 & \text{ if } q=1
\end{array}\right.$
\tiny
 \begin{longtable}{|c c | c | c | c |}
\caption{\large{ Invariants of $\mathbf{C}l_2(\LL_j)$ for the case $\left(\frac{p_1}{p_2}\right)=-1$}}\\
\hline
 $\LL_j$  & & $\mathcal{G}_j$ & $\mathcal{G}_j'$ & $\mathbf{C}l_2(\LL_j)$\\
\hline
\endfirsthead
\hline
 $\LL_j$ & &$\mathcal{G}_j$ & $\mathcal{G}_j$ & $\mathbf{C}l_2(\LL_j)$\\
\hline
\endhead
 & && & $(2^n, 2^{m})$ if $q=1$\\[-1ex]
 \raisebox{2ex}{$\LL_1$} && \raisebox{2ex}{$\langle \tau^2, \sigma\rangle$} & \raisebox{2ex}{$\langle1\rangle$} &
  $(2^{n}, 2^{m+1})$ if $q=2$\\[1ex] \hline
 $\LL_2$ & &  $\langle\rho, \sigma^2\rangle$ $\langle\rho\sigma, \sigma^2\rangle$ & $\langle \sigma^4\rangle$ & $(2, 4)$\\\hline
$\LL_3$ & &  $\langle\rho\sigma, \sigma^2\rangle$ $\langle\rho, \sigma^2\rangle$ & $\langle \sigma^4\rangle$ & $(2, 4)$\\\hline
 &$q=1$ &  $\langle\sigma\tau\rho, \sigma^2\rangle$  &  & \\[-1ex]
\raisebox{2ex}{$\LL_4$}
  &$q=2$ &$\langle\tau\rho, \sigma^2\rangle$ & \raisebox{2ex}{$\langle \sigma^4\rangle$}& \raisebox{2ex}{$(2, 4)$}\\\hline
 &$q=1$ &  $\langle\tau\rho, \sigma^2\rangle$  &  & \\[-1ex]
\raisebox{2ex}{$\LL_5$}
  &$q=2$ &$\langle\sigma\tau\rho, \sigma^2\rangle$ & \raisebox{2ex}{$\langle \sigma^4\rangle$}& \raisebox{2ex}{$(2, 4)$}\\\hline
 & $q=1$ & $\langle\sigma\tau,  \tau^2\rangle$ $\langle \tau,  \sigma^2\rangle$  & $\langle1\rangle$ &
                 $(2, 2^{m})$ $(4, 2^{m-1})$ \\[-1ex]
\raisebox{2ex}{$\LL_6$}
      & $q=2$  & $\langle\tau, \sigma^2\rangle$  $\langle\sigma\tau, \sigma^2\rangle$ & $\langle1\rangle$ &
          $(2, 8)$ $(4, 4)$\\[1ex]\hline
 &$q=1$ &  $\langle\tau, \sigma^2\rangle$ $\langle\sigma\tau, \sigma^2\rangle$    & $\langle1\rangle$ &
   $(4, 2^{m-1})$ $(2, 2^{m})$\\[-1ex]
\raisebox{2ex}{$\LL_7$}
   & $q=2$  &  $\langle\sigma\tau, \sigma^2\rangle$ $\langle\tau, \sigma^2\rangle$  & $\langle1\rangle$ &
           $(4, 4)$ $(2, 8)$\\[1ex]\hline
\end{longtable}
\normalsize
Check the entries in  some cases.\\
\indent $\ast$ Take $\mathbb{L}_1=\k=\KK_1.\KK_2.\KK_3$.  Since $\mathrm{Gal}(L/\LL_1)=\mathcal{G}_1=G_1\cap G_2$, then \\
 $ \mathcal{G}_1=
\langle\sigma, \tau\rho, \tau^2\rangle\cap\langle\sigma, \rho, \tau^2\rangle=\langle\sigma, \tau^2\rangle, $
 thus $\mathcal{G}_1'=\langle1\rangle$. As \\
  $\left\{\begin{array}{ll}
\sigma^{2^m}=\tau^{2^{n+1}}=1  & \hbox{ if } q=1,\\
\sigma^{2^m}=\tau^{2^{n+1}} \hbox{ and } \sigma^{2^{m+1}}=\tau^{2^{n+2}}=1  & \hbox{ if } q=2,
\end{array}\right.$\\
  so $\mathbf{C}l_2(\LL_1)\simeq\left\{\begin{array}{ll}
(2^n, 2^m)  & \hbox{ if } q=1,\\
(2^{\min(n,m)}, 2^{\max(n+1,m+1)})  & \hbox{ if } q=2.
\end{array}\right.$\\
\indent $\ast$ Take
 $\mathbb{L}_2=\KK_1.\KK_4.\KK_6$ and assume that $(\frac{p_1}{p_2})=1$, then $\mathcal{G}_2=\mathrm{Gal}(L/\LL_2)=G_1\cap G_4\cap G_6$. There are two  cases to distinguish: \\
 - 1\up{st} case: If $\left(\frac{p_1}{p_2}\right)_4\left(\frac{p_2}{p_1}\right)_4=\left(\frac{\pi_1}{\pi_3}\right)=-1$, then $q=1$ and\\
$\mathcal{G}_2=
 \left\{\begin{array}{ll}
\langle\sigma, \tau\rho, \tau^2\rangle \cap \langle\sigma\tau, \rho, \tau^2\rangle=\langle\sigma\tau\rho, \sigma^2, \tau^2\rangle  & \hbox{ if } (\frac{1+i}{\pi_1})(\frac{1+i}{\pi_3})=1,\\
\langle\sigma, \tau\rho, \tau^2\rangle \cap \langle\sigma\tau, \tau\rho, \sigma^2\rangle=\langle\tau\rho, \sigma^2, \tau^2\rangle  & \hbox{ if } (\frac{1+i}{\pi_1})(\frac{1+i}{\pi_3})=-1,
\end{array}\right.$\\
  thus $\mathcal{G}_2'=\langle\sigma^4, \tau^4\rangle$. On the other hand, as in this case $(\sigma\tau\rho)^2=(\tau\rho)^2=\rho^2=\sigma^{2^{m-1}}$, so   $\mathbf{C}l_2(\LL_2)\simeq(2, 2, 2)$.\\
  - 2\up{nd} case: If $\left(\frac{p_1}{p_2}\right)_4\left(\frac{p_2}{p_1}\right)_4=\left(\frac{\pi_1}{\pi_3}\right)=1$, then\\
$\mathcal{G}_2=
 \left\{\begin{array}{ll}
\langle\sigma, \tau\rho, \tau^2\rangle \cap \langle\tau, \rho, \sigma^2\rangle=\langle\tau\rho, \sigma^2, \tau^2\rangle  & \hbox{ if } (\frac{1+i}{\pi_1})(\frac{1+i}{\pi_3})=1,\\
\langle\sigma, \tau\rho, \tau^2\rangle \cap \langle\sigma\tau, \rho, \sigma^2\rangle=\langle\sigma\tau\rho, \sigma^2, \tau^2\rangle  & \hbox{ if } (\frac{1+i}{\pi_1})(\frac{1+i}{\pi_3})=-1,
\end{array}\right.$\\
as, in this case, $(\tau\rho)^2=\rho^2$ and $(\sigma\tau\rho)^2=\left\{\begin{array}{ll}
                                                               \rho^2=\sigma^{2} & \text{ if } q=1,\\
                                                               \rho^2\sigma^4=\sigma^2\tau^{-2^n} & \text{ if } q=2;
                                                               \end{array}\right.$\\ since if $q=2$, we have $\sigma^4=\tau^{2^{n+1}}$ and $\rho^2=\sigma^2\tau^{2^n}$,\\ so
$\mathcal{G}_2=
 \left\{\begin{array}{ll}
\langle\tau\rho,  \tau^2\rangle  & \hbox{ if } (\frac{1+i}{\pi_1})(\frac{1+i}{\pi_3})=1,\\
\langle\sigma\tau\rho,  \tau^2\rangle  & \hbox{ if } (\frac{1+i}{\pi_1})(\frac{1+i}{\pi_3})=-1,
\end{array}\right.$\\
 we infer that $\mathcal{G}_2'=\langle\tau^4\rangle$. From which we deduce that   $\mathbf{C}l_2(\LL_2)\simeq(2, 4)$, since $(\sigma\tau\rho)^4=(\tau\rho)^4=1$.\\
 Assume now that $(\frac{p_1}{p_2})=-1$, then
$\mathcal{G}_2=
 \left\{\begin{array}{ll}
\langle\rho, \sigma^2\rangle  & \hbox{ if } \left(\frac{\pi_1}{\pi_3}\right)=-1,\\
\langle\rho\sigma,  \sigma^2\rangle  & \hbox{ if } \left(\frac{\pi_1}{\pi_3}\right)=1.
\end{array}\right.$\\
  Thus $\mathcal{G}_2'=\langle\sigma^4\rangle$, hence   $\mathbf{C}l_2(\LL_2)\simeq(2, 4)$.\\
 \indent $\ast$ Finally, we take
 $\mathbb{L}_6=\KK_3.\KK_4.\KK_7$ and we assume that $(\frac{p_1}{p_2})=1$, then $\mathrm{Gal}(L/\LL_6)=\mathcal{G}_6=G_3\cap G_4\cap G_7$, which yields that \\
  $\mathcal{G}_6=
 \left\{\begin{array}{ll}
\langle\tau, \sigma\rangle \cap \langle\tau, \rho, \sigma^2\rangle=\langle\tau, \sigma^2\rangle  & \hbox{ if }  (\frac{1+i}{\pi_1})(\frac{1+i}{\pi_3})=1,\\
\langle\tau, \sigma\rangle \cap \langle\sigma\tau, \rho, \sigma^2\rangle=\langle\sigma\tau, \sigma^2\rangle=\langle\sigma\tau, \tau^2\rangle  & \hbox{ if }  (\frac{1+i}{\pi_1})(\frac{1+i}{\pi_3})=-1,
\end{array}\right.$\\
 therefore  $\mathcal{G}_6'=\langle1\rangle$.\\ As
   $\left\{\begin{array}{ll}
\sigma^{2^m}=\tau^{2^{n+1}}=1  & \hbox{ if } q=1,\\
\sigma^{2^m}=\tau^{2^{n+1}} \hbox{ and } \sigma^{2^{m+1}}=\tau^{2^{n+2}}=1  & \hbox{ if } q=2,
\end{array}\right.$ so\\
- 1\up{st} case: If $q=1$, then
  $\mathbf{C}l_2(\LL_6)$ is of type\\
 $\left\{\begin{array}{ll}
(2^{\min(n+1, m-1)}, 2^{\max(n+1, m-1)})=(2^{n+1}, 2^{m-1})  & \hbox{ if }  (\frac{1+i}{\pi_1})(\frac{1+i}{\pi_3})=1,\\
(2^{\min(n,m-1)}, 2^{\max(n+1,m)})   & \hbox{ if }  (\frac{1+i}{\pi_1})(\frac{1+i}{\pi_3})=-1,
\end{array}\right.$\\
- 2\up{nd} case: If $q=2$, then $m=2$, $n\geq2$ and
  $\mathbf{C}l_2(\LL_6)$ is of type\\
 $\left\{\begin{array}{ll}
 (2^{\min(n+1,m-1)}, 2^{\max(n+2,m+1)})=(2, 2^{n+2})  & \hbox{ if }  (\frac{1+i}{\pi_1})(\frac{1+i}{\pi_3})=1,\\
(2^{\min(n,m-1)}, 2^{\max(n+2,m+1)})=(2, 2^{n+2})  & \hbox{ if }  (\frac{1+i}{\pi_1})(\frac{1+i}{\pi_3})=-1,
\end{array}\right.$\\
 \indent Assume that $(\frac{p_1}{p_2})=-1$, so\\
 - 1\up{st} case: If $q=1$, then
  $\mathcal{G}_6=
 \left\{\begin{array}{ll}
\langle\tau\sigma, \tau^2\rangle  & \hbox{ if }  (\frac{\pi_1}{\pi_3})=-1,\\
\langle\tau, \sigma^2\rangle  & \hbox{ if }  (\frac{\pi_1}{\pi_3})=1,
\end{array}\right.$\\
 therefore  $\mathcal{G}_6'=\langle1\rangle$. So
  $\mathbf{C}l_2(\LL_6)\simeq\left\{\begin{array}{ll}
(2, 2^{m})  & \hbox{ if }  (\frac{\pi_1}{\pi_3})=-1,\\
(4, 2^{m-1})   & \hbox{ if }  (\frac{\pi_1}{\pi_3})=1,
\end{array}\right.$\\
- 2\up{nd} case: If $q=2$, then $m=2$, $n=1$ and
$\mathcal{G}_6=
 \left\{\begin{array}{ll}
\langle\tau, \sigma^2\rangle  & \hbox{ if }  (\frac{\pi_1}{\pi_3})=-1,\\
 \langle\tau\sigma, \tau^2\rangle  & \hbox{ if }  (\frac{\pi_1}{\pi_3})=1.
\end{array}\right.$\\ Hence
  $\mathbf{C}l_2(\LL_6)\simeq\left\{\begin{array}{ll}
 (2, 8)=(2, 2^{n+2})  & \hbox{ if }  (\frac{\pi_1}{\pi_3})=-1,\\
(4, 4)  & \hbox{ if }  (\frac{\pi_1}{\pi_3})=1,
\end{array}\right.$\\
The other  tables entries are checked similarly.
\section{Numerical examples}
Table \ref{29}  gives the structure of the class group $\mathbf{C}l(\kk)$ of the bicyclic biquadratic field $\kk=\QQ(\sqrt{2p_1p_2}, i)$, its  discriminant $disc(\kk)$, the structures of the class groups of its two quadratic subfields $k_0$, $\overline{k}_0$ and the coclass of $G=\mathrm{Gal}(\L/\kk)$. Tables \ref{30} and \ref{31} give the structures of the class groups  $\mathbf{C}l(\KK_j)$.  Tables \ref{32} and \ref{33} give the structures of the class groups  $\mathbf{C}l(\LL_j)$.  Finally, Tables \ref{34} and \ref{35} give the structures of the class groups  of $\mathbf{C}l(\KK_j)$ and  $\mathbf{C}l(\LL_j)$ for the case $(\frac{p_1}{p_2})=-1$. Note that  $\pi=(\frac{\pi_1}{\pi_3})$ and  $b=\left(\frac{1+i}{\pi_1}\right)\left(\frac{1+i}{\pi_3}\right)$.
 Computation are made  using PARI/GP \cite{GP-11}.
\tiny
\begin{longtable}{| c | c | c | c | c | c | c | c | c |}
\caption{\small{ Invariants of $\kk$}\label{29}}\\
\hline
 $d$  $ = p_1.p_2.q$ & $q$ & $(\frac{p_1}{p_2})$ & $m$, $n$
             & $\mathbf{C}l_2(k_0)$ & $\mathbf{C}l_2(\overline{k}_0)$ & $\mathbf{C}l_2(\kk)$ & $disc(\kk)$ & $cc(G)$ \\
\hline
\endfirsthead
\hline
 $d$  $ = p_1.p_2.q$ & $q$ & $(\frac{p_1}{p_2})$ & $m$, $n$
             & $\mathbf{C}l_2(k_0)$ & $\mathbf{C}l_2(\overline{k}_0)$ & $\mathbf{C}l_2(\kk)$ & $disc(\kk)$ & $cc(G)$ \\
\hline
\endhead
$130 = 2.5.13 $ & $2 $ & $-1 $ & $2$, $1$ & $(2, 2) $ & $ (2, 2) $ & $ (2 , 2, 2) $ & $1081600 $ & $ 3$\\
$290 = 2.5.29 $ & $1 $ & $1 $ & $2$, $2$ & $(2, 2) $ & $ (10, 2) $ & $ (10 , 2, 2) $ & $5382400 $ & $ 3$\\
$370 = 2.5.37 $ & $1 $ & $-1 $ & $3$, $1$ & $(2, 2) $ & $ (6, 2) $ & $ (6 , 2, 2) $ & $8761600 $ & $ 3$\\
$754 = 2.13.29 $ & $1 $ & $1 $ & $3$, $1$ & $(2, 2) $ & $ (10, 2) $ & $ (10 , 2, 2) $ & $36385024 $ & $ 3$\\
$3922 = 2.53.37 $ & $1 $ & $1 $ & $4$, $1$ & $(2, 2) $ & $ (10, 2) $ & $ (10 , 2, 2) $ & $984453376 $ & $ 3$\\
$4610 = 2.461.5 $ & $2 $ & $1 $ & $2$, $4$ & $(2, 2) $ & $ (26, 2) $ & $ (26 , 2, 2) $ & $1360134400 $ & $ 3$\\
$5122 = 2.197.13 $ & $1 $ & $-1 $ & $5$, $1$ & $(2, 2) $ & $ (14, 2) $ & $ (14 , 2, 2) $ & $1679032576 $ & $ 3$\\
$5410 = 2.5.541 $ & $1 $ & $1 $ & $2$, $3$ & $(2, 2) $ & $ (22, 2) $ & $ (22 , 2, 2) $ & $1873158400 $ & $ 3$\\
\hline
\end{longtable}
\tiny
\begin{longtable}{| p{0.4in} | p{0.02in} | p{0.25in} |p{0.4in} |p{0.4in} | p{0.4in} | p{0.4in} |p{0.4in} | p{0.4in} | p{0.4in} |}
\caption{\small{ Invariants of $\KK_j$ for the case $(\frac{p_1}{p_2})=1$ and $(\frac{\pi_1}{\pi_3})=-1$}\label{30}}\\
\hline
 $2.p_1.p_2$ & $q$ & $m,$ $n$
             & $\mathbf{C}l(\KK_1)$ & $\mathbf{C}l(\KK_2)$ & $\mathbf{C}l(\KK_3)$ & $\mathbf{C}l(\KK_4)$
			   & $\mathbf{C}l(\KK_5)$ & $\mathbf{C}l(\KK_6)$ & $\mathbf{C}l(\KK_7)$\\
\hline
\endfirsthead
\hline
 $2.p_1.p_2$ & $q$ & $m,$ $n$
             & $\mathbf{C}l(\KK_1)$ & $\mathbf{C}l(\KK_2)$ & $\mathbf{C}l(\KK_3)$ & $\mathbf{C}l(\KK_4)$
  			  & $\mathbf{C}l(\KK_5)$ & $\mathbf{C}l(\KK_6)$ & $\mathbf{C}l(\KK_7)$\\
\hline
\endhead
$ 2.5.269 $ & $1 $ & $3$, $1 $ & $ (30, 10, 2) $ & $ (330, 2, 2) $ & $ (120 , 12)
 $ & $ (30, 2, 2) $ & $ (30, 2, 2) $ & $ (30, 2, 2) $ & $ (30, 2, 2) $\\
$ 2.53.29 $ & $1 $ & $3$, $1 $ & $ (78, 2, 2) $ & $ (78, 6, 2) $ & $ (104 , 4)
 $ & $ (130, 2, 2) $ & $ (26, 2, 2) $ & $ (26, 2, 2) $ & $ (130, 2, 2) $\\
$2.5.389 $ & $1 $ & $3$, $1 $ & $ (42, 14, 2) $ & $ (462, 2, 2) $ & $ (168 , 4)
 $ & $ (42, 2, 2) $ & $ (42, 6, 2) $ & $ (42, 6, 2) $ & $ (42, 2, 2) $\\
$2.53.37 $ & $1 $ & $4$, $1 $ & $ (30, 10, 2) $ & $ (30, 2, 2) $ & $ (80 , 4)
 $ & $ (10, 2, 2) $ & $ (30, 2, 2) $ & $ (30, 2, 2) $ & $ (10, 2, 2) $\\
$ 2.13.157 $ & $1 $ & $4$, $1 $ & $ (26, 26, 2) $ & $ (78, 6, 2) $ & $ (208 , 4)
 $ & $ (26, 2, 2) $ & $ (26, 2, 2) $ & $ (26, 2, 2) $ & $ (26, 2, 2) $\\
$ 2.13.269 $ & $1 $ & $4$, $1 $ & $ (90, 10, 2) $ & $ (990, 6, 2) $ & $ (720 , 4)
 $ & $ (90, 2, 2) $ & $ (90, 2, 2) $ & $ (90, 2, 2) $ & $ (90, 2, 2) $\\
\hline
\end{longtable}
\normalsize
\tiny
\begin{longtable}{| c | c | c | c | c |c | c | c | c| c |}
\caption{\small{ Invariants of $\KK_j$ for the case $(\frac{p_1}{p_2})=1$ and $(\frac{\pi_1}{\pi_3})=1$}\label{31}}\\
\hline
 $d$  $ = 2.p_1.p_2$ & $q$ & $m,$ $n$
             & $\mathbf{C}l(\KK_1)$ & $\mathbf{C}l(\KK_2)$ & $\mathbf{C}l(\KK_3)$ & $\mathbf{C}l(\KK_4)$
			   & $\mathbf{C}l(\KK_5)$ & $\mathbf{C}l(\KK_6)$ & $\mathbf{C}l(\KK_7)$\\
\hline
\endfirsthead
\hline
 $d$  $ = 2.p_1.p_2$ & $q$ & $m,$ $n$
             & $\mathbf{C}l(\KK_1)$ & $\mathbf{C}l(\KK_2)$ & $\mathbf{C}l(\KK_3)$ & $\mathbf{C}l(\KK_4)$
  			  & $\mathbf{C}l(\KK_5)$ & $\mathbf{C}l(\KK_6)$ & $\mathbf{C}l(\KK_7)$\\
\hline
\endhead
$4498 = 2.13.173 $ & $1 $ & $2$, $2 $ & $ (210, 2, 2) $ & $ (42, 14, 2) $ & $ (280 , 4)
 $ & $ (28, 2) $ & $ (28, 2) $ & $ (28, 2) $ & $ (28, 2) $\\
$4610 = 2.461.5 $ & $2 $ & $2$, $4 $ & $ (390, 2, 2) $ & $ (234, 2, 2) $ & $ (2496 , 4)
 $ & $ (780, 2) $ & $ (260, 2) $ & $ (260, 2) $ & $ (780, 2) $\\
$5090 = 2.5.509 $ & $2 $ & $2$, $2 $ & $ (234, 2, 2) $ & $ (390, 2, 2) $ & $ (624 , 4)
 $ & $ (52, 2) $ & $ (156, 2) $ & $ (156, 2) $ & $ (52, 2) $\\
$5410 = 2.541.5 $ & $1 $ & $2$, $3 $ & $ (110, 2, 2) $ & $ (22, 22, 2) $ & $ (1584 , 4)
 $ & $ (44, 2) $ & $ (44, 2) $ & $ (44, 2) $ & $ (44, 2) $\\
$6322 = 2.29.109 $ & $1 $ & $2$, $2 $ & $ (90, 6, 2) $ & $ (18, 6, 2) $ & $ (504 , 4)
 $ & $ (180, 2) $ & $ (36, 6) $ & $ (36, 6) $ & $ (180, 2) $\\
$7090 = 2.709.5 $ & $2 $ & $2$, $2 $ & $ (130, 2, 2) $ & $ (442, 2, 2) $ & $ (1872 , 4)
 $ & $ (52, 2) $ & $ (52, 2) $ & $ (52, 2) $ & $ (52, 2) $\\
\hline
\end{longtable}
\normalsize
\tiny
\begin{longtable}{| c  c  c  c  c c  c  c  c c  c |}
\caption{\small{ Invariants of $\LL_j$ for $(\frac{p_1}{p_2})=1$ and $(\frac{\pi_1}{\pi_3})=1$,} $m$ is always $2$ \label{32}}\\
\hline
 $d$   & $q$ & $b$ &  $n$
             & $\mathbf{C}l(\LL_1)$ & $\mathbf{C}l(\LL_2)$ & $\mathbf{C}l(\LL_3)$ & $\mathbf{C}l(\LL_4)$
    			 & $\mathbf{C}l(\LL_5)$ & $\mathbf{C}l(\LL_6)$ & $\mathbf{C}l(\LL_7)$\\
\hline
\endfirsthead
\hline
 $d$   & $q$ & $b$ &  $n$
             & $\mathbf{C}l(\LL_1)$ & $\mathbf{C}l(\LL_2)$ & $\mathbf{C}l(\LL_3)$ & $\mathbf{C}l(\LL_4)$
    			 & $\mathbf{C}l(\LL_5)$ & $\mathbf{C}l(\LL_6)$ & $\mathbf{C}l(\LL_7)$\\
\hline
\endhead
$17090 $ & $2 $ & $-1 $ &  $2 $ & $ (9240, 420) $ & $ (4620, 2) $ & $ (4620 , 2)
 $ & $ (420, 42) $ & $ (420, 42) $ & $ (1680, 30) $ & $ (1680, 10) $\\
$1586  $ & $1 $ & $1 $ &  $2 $ & $ (660, 12) $ & $ (220, 2) $ & $ (132 , 6)
 $ & $ (132, 6) $ & $ (132, 6) $ & $ (88, 2) $ & $ (88, 2) $\\
$2290  $ & $1 $ & $-1 $ &  $2 $ & $ (780, 60) $ & $ (260, 2) $ & $ (60 , 10)
 $ & $ (60, 10) $ & $ (60, 10) $ & $ (120, 2) $ & $ (120, 2) $\\
$2626  $ & $2 $ & $1 $ &  $2 $ & $ (504, 12) $ & $ (252, 6) $ & $ (252 , 6)
 $ & $ (36, 6) $ & $ (36, 6) $ & $ (144, 6) $ & $ (144, 6) $\\
$4610 $ & $2 $ & $1 $ &  $4 $ & $ (18720, 12) $ & $ (2340, 30) $ & $ (2340 , 30) $ & $ (780, 30) $ & $ (780, 30) $ & $ (12480, 30) $ & $ (12480, 10) $\\
 $5410$ & $1 $ & $-1 $ &  $3 $ & $ (3960, 44) $ & $ (44, 22) $ & $ (44 , 22) $ & $ (220, 2) $ & $ (220, 2) $ & $ (1584, 2) $ & $ (1584, 2) $\\
 \hline
\end{longtable}
\begin{longtable}{| c   c  c  c c  c  c  c c c |}
\caption{\small{ Invariants of $\LL_j$ for  $(\frac{p_1}{p_2})=1$ and $(\frac{\pi_1}{\pi_3})=-1$}\label{33}}\\
\hline
 $d$   & $b$ & $m,$ $n$
             & $\mathbf{C}l(\LL_1)$ & $\mathbf{C}l(\LL_2)$ & $\mathbf{C}l(\LL_3)$ & $\mathbf{C}l(\LL_4)$
    			 & $\mathbf{C}l(\LL_5)$ & $\mathbf{C}l(\LL_6)$ & $\mathbf{C}l(\LL_7)$\\
\hline
\endfirsthead
\hline
 $d$  & $b$ & $m,$ $n$
             & $\mathbf{C}l(\LL_1)$ & $\mathbf{C}l(\LL_2)$ & $\mathbf{C}l(\LL_3)$ & $\mathbf{C}l(\LL_4)$
    			 & $\mathbf{C}l(\LL_5)$ & $\mathbf{C}l(\LL_6)$ & $\mathbf{C}l(\LL_7)$\\
\hline
\endhead
$1090  $ & $1 $ & $4$, $1 $ & $ (240, 6) $ & $ (30, 6, 2) $ & $ (6 , 6, 6)
 $ & $ (6, 6, 6) $ & $ (6, 6, 6) $ & $ (24, 12) $ & $ (48, 6) $\\
$1490 $ & $-1 $ & $3$, $1 $ & $ (504, 6) $ & $ (126, 6, 2) $ & $ (126 , 6, 2)
 $ & $ (18, 6, 6) $ & $ (18, 6, 6) $ & $ (72, 2) $ & $ (36, 12) $\\
$4082 $ & $-1 $ & $4$, $1 $ & $ (624, 78) $ & $ (26, 26, 2) $ & $ (78 , 6, 2)
 $ & $ (78, 6, 2) $ & $ (78, 6, 2) $ & $ (208, 2) $ & $ (104, 4) $\\
$4706 $ & $1 $ & $3$, $1 $ & $ (6120, 6) $ & $ (306, 6, 2) $ & $ (510 , 6, 2)
 $ & $ (510, 6, 2) $ & $ (510, 6, 2) $ & $ (204, 4) $ & $ (408, 2) $\\
$6994 $ & $-1 $ & $4$, $1 $ & $ (7920, 30) $ & $ (90, 10, 2) $ & $ (990 , 6, 2)
 $ & $ (990, 6, 2) $ & $ (990, 6, 2) $ & $ (720, 2) $ & $ (360, 4) $\\
$7474  $ & $-1 $ & $5$, $1 $ & $ (43680, 2) $ & $ (78, 6, 2) $ & $ (2730 , 2, 2)
 $ & $ (2730, 2, 2) $ & $ (2730, 2, 2) $ & $ (1248, 6) $ & $ (208, 4) $\\
\hline
\end{longtable}
\normalsize
\tiny
\begin{longtable}{| c | c | c | c | c |c | c | c | c| c | c |}
\caption{\small{ Invariants of $\KK_j$ for the case $(\frac{p_1}{p_2})=-1$}\label{34}}\\
\hline
 $d$  $ = 2.p_1.p_2$ & $q$ & $\pi$ & $m$
             & $\mathbf{C}l(\KK_1)$ & $\mathbf{C}l(\KK_2)$ & $\mathbf{C}l(\KK_3)$ & $\mathbf{C}l(\KK_4)$
			   & $\mathbf{C}l(\KK_5)$ & $\mathbf{C}l(\KK_6)$ & $\mathbf{C}l(\KK_7)$\\
\hline
\endfirsthead
\hline
 $d$  $ = 2.p_1.p_2$ & $q$ & $\pi$ & $m$
             & $\mathbf{C}l(\KK_1)$ & $\mathbf{C}l(\KK_2)$ & $\mathbf{C}l(\KK_3)$ & $\mathbf{C}l(\KK_4)$
  			  & $\mathbf{C}l(\KK_5)$ & $\mathbf{C}l(\KK_6)$ & $\mathbf{C}l(\KK_7)$\\
\hline
\endhead
$130 = 2.5.13 $ & $2 $ & $-1$ & $2 $ & $ (12, 2) $ & $ (4, 2) $ & $ (8, 4) $ & $ (4, 2) $ & $ (2, 2, 2) $ & $ (2, 2, 2) $ & $ (4, 2) $\\
$370 = 2.37.5 $ & $1 $ & $-1$ & $3 $ & $ (12, 2) $ & $ (60, 2) $ & $ (24, 4) $ & $ (12, 2) $ & $ (6, 2, 2) $ & $ (6, 2, 2) $ & $ (12, 2) $\\
$530 = 2.5.53 $ & $2 $ & $1$ & $2 $ & $ (84, 2) $ & $ (84, 2) $ & $ (56, 4) $ & $ (14, 2, 2) $ & $ (28, 2) $ & $ (28, 2) $ & $ (14, 2, 2)$\\
$1970 = 2.197.5 $ & $2 $ & $1$ & $2 $ & $ (260, 2) $ & $ (260, 2) $ & $ (312, 12) $ & $ (26, 2, 2) $ & $ (52, 2) $ & $ (52, 2) $ & $ (26, 2, 2)$\\
$2930 = 2.293.5 $ & $1 $ & $1$ & $3 $ & $ (252, 2) $ & $ (252, 2) $ & $ (56, 4) $ & $ (42, 2, 2) $ & $ (28, 2) $ & $ (28, 2) $ & $ (42, 2, 2) $\\
$3538 = 2.61.29 $ & $1 $ & $-1$ & $5 $ & $ (84, 2) $ & $ (420, 2) $ & $ (224, 4) $ & $ (28, 2) $ & $ (14, 2, 2) $ & $ (14, 2, 2) $ & $ (28, 2) $\\
$5570 = 2.5.557 $ & $1 $ & $-1$ & $4 $ & $ (660, 2) $ & $ (180, 6) $ & $ (240, 4) $ & $ (60, 2) $ & $ (30, 2, 2) $ & $ (30, 2, 2) $ & $ (60, 2) $\\
$6130 = 2.5.613 $ & $2 $ & $-1$ & $2 $ & $ (1260, 6) $ & $ (180, 2) $ & $ (72, 36) $ & $ (36, 2) $ & $ (18, 2, 2) $ & $ (18, 2, 2) $ & $ (36, 2) $\\
\hline
\end{longtable}
\begin{longtable}{| c  | c | c | c |c | c | c | c| c | c | c |}
\caption{\small{ Invariants of $\LL_j$ for the case $(\frac{p_1}{p_2})=-1$}\label{35}}\\
\hline
 $d$  $ = 2.p_1.p_2$ & $q$ & $\pi$ & $m$
             & $\mathbf{C}l(\LL_1)$ & $\mathbf{C}l(\LL_2)$ & $\mathbf{C}l(\LL_3)$ & $\mathbf{C}l(\LL_4)$
    			 & $\mathbf{C}l(\LL_5)$ & $\mathbf{C}l(\LL_6)$ & $\mathbf{C}l(\LL_7)$\\
\hline
\endfirsthead
\hline
 $d$  $ = 2.p_1.p_2$ & $q$ & $\pi$ & $m$
             & $\mathbf{C}l(\LL_1)$ & $\mathbf{C}l(\LL_2)$ & $\mathbf{C}l(\LL_3)$ & $\mathbf{C}l(\LL_4)$
    			 & $\mathbf{C}l(\LL_5)$ & $\mathbf{C}l(\LL_6)$ & $\mathbf{C}l(\LL_7)$\\
\hline
\endhead
$130 = 2.5.13 $ & $2 $ & $-1 $ & $2 $ & $ (24, 2) $ & $ (12, 2) $ & $ (12 , 2)
 $ & $ (4, 2) $ & $ (4, 2) $ & $ (8, 2) $ & $ (4, 4) $\\
$370 = 2.5.37 $ & $1 $ & $-1 $ & $3 $ & $ (120, 2) $ & $ (60, 2) $ & $ (12 , 2)
 $ & $ (12, 2) $ & $ (12, 2) $ & $ (24, 2) $ & $ (12, 4) $\\
$530 = 2.5.53 $ & $2 $ & $1 $ & $2 $ & $ (168, 6) $ & $ (84, 2) $ & $ (84 , 2)
 $ & $ (84, 2) $ & $ (84, 2) $ & $ (28, 4) $ & $ (56, 2) $\\
$1970 = 2.5.197 $ & $2 $ & $1 $ & $2 $ & $ (1560, 30) $ & $ (260, 2) $ & $ (260 , 2)
 $ & $ (260, 2) $ & $ (260, 2) $ & $ (156, 12) $ & $ (312, 6) $\\
$2930 = 2.293.5 $ & $1 $ & $1 $ & $3 $ & $ (504, 18) $ & $ (252, 6) $ & $ (252 , 6)
 $ & $ (252, 6) $ & $ (252, 6) $ & $ (84, 12) $ & $ (56, 2) $\\
$3538 = 2.29.61 $ & $1 $ & $-1 $ & $5 $ & $ (3360, 6) $ & $ (420, 2) $ & $ (420 , 2)
 $ & $ (84, 2) $ & $ (84, 2) $ & $ (224, 2) $ & $ (112, 4) $\\
$5570 = 2.557.5 $ & $1 $ & $-1 $ & $4 $ & $ (7920, 6) $ & $ (180, 6) $ & $ (660 , 2)
 $ & $ (660, 2) $ & $ (660, 2) $ & $ (240, 2) $ & $ (120, 4) $\\
$6130 = 2.5.613 $ & $2 $ & $-1 $ & $2 $ & $ (2520, 90) $ & $ (1260, 6) $ & $ (1260 , 6)
 $ & $ (180, 2) $ & $ (180, 2) $ & $ (72, 18) $ & $ (36, 36) $\\
\hline
\end{longtable}
\normalsize


\begin{thebibliography}{*1}
\bibitem{Az-05} A. Azizi, {\em Sur les unités de certains corps de nombres de degré $8$ sur $\QQ$, }
 {Ann. Sci. Math. Québec {\bf 29} (2005), no 2, 111-129.}

\bibitem{AZT12-1}
A. Azizi, A. Zekhnini et M. Taous, {\em On the unramified quadratic and biquadratic extensions of the field  $\QQ(\sqrt{d}, i)$}{, IJA, Volume {\bf 6} No. 24 (2012), 1169-1173.}

\bibitem{AZT12-2}
A. Azizi, A. Zekhnini et M. Taous, {\em On the generators of the $2$-class group of the field  $\mathbf{k}=\QQ(\sqrt{d}, i)$}{, IJPAM, Volume {\bf 81} No. 5 (2012), 773-784.}

\bibitem{AZT-3}
A. Azizi, A. Zekhnini et M. Taous, {\em  On the $2$-class field tower of $\QQ(\sqrt{2p_1p_2},i)$ and the Galois group of its second Hilbert $2$-class field,}{ Collect. Math. DOI 10.1007/s13348-013-0085-4.}

\bibitem{AT08}
A. Azizi et M. Taous, {\em Détermination des corps $\mathbf{k}=\QQ(\sqrt{d}, i)$ dont les $2$-groupes de classes sont de type $(2, 4)$ ou $(2, 2, 2)$}{, Rend. Istit. Mat. Univ. Trieste. {\textbf{40}} (2008), 93-116.}

\bibitem{Sc-34}
 A. Scholz, {\em \"Uber die L\"obarkeit der Gleichung $t^2-Du^2=-4$,}{Math. Z. {\bf 39} (1934), 95-111.}

\bibitem{B.L.S-98}
 E. Benjamin, F. Lemmermeyer, C. Snyder,   {\em Real Quadratic Fields with Abelian $2$-Class Field twoer,} {Journal of Number Theory, Volume {\bf 73}, Number 2, December (1998), pp. 182-194 (13).}

\bibitem{B.L.S-03}
E. Benjamin, F. Lemmermeyer, C. Snyder, {\em Imaginary quadratic fields with $Cl_2(k)\simeq(2; 2; 2)$}{, J. Number
Theory {\bf 103} (2003), 38-70.}

\bibitem{Lm94}
F. Lemmermeyer, {\em On $2$-class field towers of imaginary quadratic number fields, }{Journal de Théorie des Nombres de Bordeaux {\bf 6 } (1994), 261-272.}

\bibitem{Lm97}
F. Lemmermeyer, {\em On $2$-class field towers of some imaginary quadratic number fields, }{Abh. Math. Sem. Hamburg {\bf 67}  (1997), 205-214.}

\bibitem{Lm00}
F. Lemmermeyer, {\em Reciprocity Laws}{, Springer Monographs in Mathematics, Springer-Verlag. Berlin 2000.}

\bibitem{Gr-03}
 G.Gras, {\em Class field theory, from theory to practice,}{ Springer Verlag (2003).}

\bibitem{Ki76}
H. Kisilevsky, {\em Number fields with class number congruent to $4$ modulo $8$ and Hilbert's Theorem $94$,}
{J. Number Theory {\bf 8} (1976), 271-279.}

\bibitem{Se70}
J. P. Serre, {\em Sur une question d'Olga Taussky,}{  J. Number Theory {\bf 2} (1970),
235-236.}



\bibitem{Ta37}
O. Taussky, {\em A Remark on the Class Field Tower, }{J. London Math. Soc.
{\bf 12} (1937), 82-85.}

 \bibitem{Ta-70}
 O. Taussky, {\em A remark concerning Hilbert's Theorem $94$,}{ J. Reine Angew. Math. {\bf 239/240} (1970), 435-438.}

\bibitem{Ka76}
P. Kaplan, {\em Sur le $2$-groupe de classes d'idéaux des corps
quadratiques, }{J. Reine angew. Math. {\bf 283/284} (1976), 313-363.}
\bibitem{GP-11}
The PARI Group, {PARI/GP, Bordeaux, }{Version 2.4.4 (betta),  May 9 2011, (http://pari.math.u-bordeaux.fr).}
\end{thebibliography}
\end{document}